\newcommand\blfootnote[1]{%
  \begingroup
  \renewcommand\thefootnote{}\footnote{#1}%
  \addtocounter{footnote}{-1}%
  \endgroup
}
\theoremstyle{plain}
\newtheorem{theorem}{Theorem}[section]
\newtheorem{proposition}[theorem]{Proposition}
\newtheorem{lemma}[theorem]{Lemma}
\newtheorem{corollary}[theorem]{Corollary}
\theoremstyle{remark}
\theoremstyle{definition}
\newtheorem{definition}{Definition}[section]
\newtheorem{example}[theorem]{Example}
\DeclareMathOperator{\End}{End}
\DeclareMathOperator{\Hom}{Hom}
\DeclareMathOperator{\tr}{tr}
\DeclareMathOperator{\sh}{sh}
\DeclarePairedDelimiter\ceil{\lceil}{\rceil}\DeclarePairedDelimiter\floor{\lfloor}{\rfloor}
\title{A Level-Depth Correspondence between Verlinde Rings and Subfactors}
\author{Jun Yang\thanks{Harvard University, Cambridge, MA 02138, USA}}
\date{}
\begin{document}
\maketitle

\blfootnote{\Letter~\href{mailto:junyang@fas.harvard.edu}{junyang@fas.harvard.edu}}


\begin{abstract}
We establish a correspondence between the levels of Verlinde rings and the depths of subfactors. 
Given the $l$-level Verlinde ring $R_l(G)$ of a simple compact Lie group $G$, the tensor products of fundamental representations give us the inclusion of a pair of $\text{II}_1$ factors $N\subset M$. 
For the depth $d$ of $N\subset M$, we first prove $d=l$ for type $A_n,C_n$ and $B_2$.
More generally, the depth $d$ is shown to satisfy
\begin{center}
    $\beta\cdot l\leq d\leq l$ with $\beta\in (0,1)$,
\end{center}
where $\beta$ is uniquely determined by the simple type of $G$.   
We also show that the simple $N$-$N$-bimodules contained in $L^2(M)$ generate the Verlinde ring $R_l(G)$ as its fusion category.

\end{abstract}

\tableofcontents

\section{Introduction}

The Verlinde ring is a fusion category arising from the positive energy representation of loop groups \cite{PSloop} and also from the representation theory of quantum groups \cite{BKtensor}.  
There are several distinct approaches to the Verlinde ring, i.e., from the view of algebraic geometry by G. Faltings \cite{Falt94},  operator algebra by A. Wassermann \cite{Was98}, and twisted K-theory by D. Freed, M. Hopkins and C. Teleman \cite{FMT3}. 
It can be shown to be isomorphic to a quotient of the representation ring $R(G)$ of a compact simple Lie group $G$, or, equivalently, the representation ring $R(\mathfrak{g})$ of the corresponding simple complex Lie algebra $\mathfrak{g}$. 
The kernel of this quotient is uniquely determined by a positive integer $l$, which is usually called the {\it level}.

Generally speaking, a fusion category usually has a strong correspondence with subfactors, which denotes the inclusion of a pair of von Neumann algebras of type $\text{II}_1$ with trivial centers (factors). 
More precisely, we are always able to construct an inclusion pair of factors $N\subset M$ such that a certain tensor category within it is isomorphic to a given tensor category $\mathcal{C}$. 
T. Hayashi and S. Yamagami \cite{HY00afd} first realized all the $C^*$-tensor categories of bimodules over the hyperfinite $\text{II}_1$ factor. 
S. Falguières and S. Vaes \cite{FVa10cpt} showed the representation category of any compact group arises from the finite index bimodules of some $\text{II}_1$ factor. 
Then S. Falguières and S. Raum  \cite{FRcat13} treated the finite $C^*$-tensor category as well. 
Conversely, starting with a given tensor category, one can also generate subfactors. 
S. Sawin \cite{Saw95} first obtained subfactors from quantum groups with parameters that are not the roots of unity. 
For the case of roots of unity, H. Wenzl \cite{Wenz98} constructed subfactors from the tensor product of a module (and its dual) over quantum groups while F. Xu \cite{Xu98} constructed subfactors through quantum groups and the $\lambda$-lattices (see S. Popa's axiomatization \cite{Popa95}). 

For subfactors, there is a positive integer called the {\it depth}, denoted $d$,  which gives us much information about the inclusion.  
Given a subfactor $N\subset M$, one can iterate the basic construction, which plays a central role in the study of the index $[M:N]$ by V. Jones \cite{J83}. 
We then obtain a tower of $\text{II}_1$ factors: 
\begin{center}
$N\subset M=M_1\subset M_2=\langle M_1,e_1 \rangle \subset M_3=\langle M_2,e_2 \rangle \subset \cdots$,
\end{center}
where $e_k$ is the projection $L^2(M_{k})\to L^2(M_{k-1})$. 
The depth $d$ is then defined to be the minimal integer $k$ such that the center of the commutant $N'\cap M_k$ has its maximal dimension. 

This paper aims to give a subtle construction of subfactors from a Verlinde ring at level $l$ with the depth equal (or proportional) to $l$. 
Let $R_l(G)$ be the $l$-level Verlinde ring of a simple simply-connected compact Lie group $G$. 
\begin{theorem}
There is a subfactor $N\subset M$ constructed from the fundamental representations of $G$ with the depth $d(l)$ which satisfies
\begin{center}
$\beta\cdot l\leq d(l)\leq l$ with $\beta\in (0,1)$
\end{center}
for all simple types of $G$. 
Moreover, $d(l)=l$ for type $A_n$, $C_n$ and $B_2$. 
\end{theorem}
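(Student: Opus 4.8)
\medskip

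The plan is to reduce the theorem to an estimate on the fusion graph of $R_l(G)$ and then carry out that estimate. First I fix the construction: realise $R_l(G)$ faithfully inside the bimodule category ${}_{M}\mathrm{Bim}_{M}$ of a hyperfinite $\mathrm{II}_{1}$ factor $M$ (via Hayashi--Yamagami, or Popa's reconstruction), take the object $X:=\bigoplus_{i=1}^{n}\rho_{i}$, which is self-conjugate up to isomorphism because $\overline{\rho_{i}}\cong\rho_{i^{*}}$ with $i\mapsto i^{*}$ the relevant diagram involution, and let $N\subset M$ be the subfactor associated to $X$. With this choice the principal graph of $N\subset M$ is the fusion graph $\Gamma$ whose vertices are the simple objects of $R_l(G)$ and whose adjacency is given by tensoring with $X$, and $d(l)$ equals the eccentricity of the unit object $\mathbf{1}$ in $\Gamma$; that is,
\[
 d(l)=\max_{\mu}\ \min\bigl\{\,k\ge 0\ :\ V_{\mu}\text{ is a direct summand of }X^{\otimes k}\text{ in }R_l(G)\,\bigr\},
\]
where $\mu$ ranges over the dominant weights of level $|\mu|:=\langle\mu,\theta^{\vee}\rangle\le l$; here $\theta$ is the highest root, $\omega_{i}$ are the fundamental weights, and $|\omega_{i}|=a_{i}^{\vee}$ is the $i$-th comark, so $|\mu|=\sum_{i}a_{i}a_{i}^{\vee}$ when $\mu=\sum_{i}a_{i}\omega_{i}$. (Since $X^{\otimes 2}\supseteq\rho_{i}\otimes\rho_{i^{*}}\supseteq\mathbf{1}$, the summands of $X^{\otimes k}$ increase with $k$, so the eccentricity makes sense; its finiteness is exactly the last assertion of the abstract, that the simple $N$-$N$-bimodules in $L^{2}(M)$ generate $R_l(G)$.)

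\medskip

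\emph{Upper bound $d(l)\le l$.} I reach each $V_{\mu}$ along a staircase of fundamental representations. As each comark $a_{i}^{\vee}\ge 1$, a simple object $\mu=\sum_{i}a_{i}\omega_{i}$ has $m:=\sum_{i}a_{i}\le|\mu|\le l$; I claim $V_{\mu}$ is a direct summand of the length-$m$ monomial $\bigotimes_{i}\rho_{i}^{\otimes a_{i}}$ computed in $R_l(G)$, hence of $X^{\otimes m}$ with $m\le l$. Order the tensor factors and let $\mu^{(t)}$ be the sum of the first $t$ highest weights, so $\mu^{(0)}=0$, $\mu^{(m)}=\mu$, and $|\mu^{(t)}|\le|\mu|\le l$ for all $t$. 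Inducting on $t$: if $V_{\mu^{(t)}}$ is a summand of the first $t$ factors and $\rho_{j}$ is the next one, then $\mu^{(t+1)}=\mu^{(t)}+\omega_{j}$, and classically $V_{\mu^{(t)}}\otimes V_{\omega_{j}}$ contains its Cartan component $V_{\mu^{(t+1)}}$ with multiplicity one while every constituent lies $\le\mu^{(t+1)}$. Since $\langle\alpha_{i},\theta^{\vee}\rangle\ge 0$ for every simple root, the functional $|\cdot|$ is monotone for the dominance order, so all constituents have level $\le l$; a dominant weight of level $\le l$ has its $\rho$-shift strictly inside the fundamental affine alcove at level $l$, so the Kac--Walton truncation is trivial on this product and its $R_l(G)$-decomposition equals the classical one, still containing $V_{\mu^{(t+1)}}$. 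This closes the induction; in particular the $\rho_{i}$ generate $R_l(G)$ as a fusion category.

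\medskip

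\emph{Lower bound.} I test against $|\cdot|$ again. Every weight of $\rho_{j}$ is $\le\omega_{j}$, hence has $|\cdot|$-value $\le a_{\max}^{\vee}:=\max_{i}a_{i}^{\vee}$, so if $V_{\sigma}$ is a summand of a length-$k$ monomial in the $\rho_{j}$ then $\sigma$ is $\le$ a sum of $k$ fundamental weights and $|\sigma|\le k\,a_{\max}^{\vee}$. Choosing $i_{0}$ with $a_{i_{0}}^{\vee}=a_{\min}^{\vee}:=\min_{i}a_{i}^{\vee}$ and $\mu_{0}:=\lfloor l/a_{\min}^{\vee}\rfloor\,\omega_{i_{0}}$, a genuine simple object with $|\mu_{0}|\ge l-a_{\min}^{\vee}+1$, the displayed formula gives
\[
 d(l)\ \ge\ \frac{|\mu_{0}|}{a_{\max}^{\vee}}\ \ge\ \frac{l-a_{\min}^{\vee}+1}{a_{\max}^{\vee}}\ \ge\ \beta\,l
\]
for a suitable $\beta=\beta(G)\in(0,1)$ depending only on the comarks (for $l$ large one may take $\beta$ arbitrarily close to $1/a_{\max}^{\vee}$). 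For types $A_{n}$, $C_{n}$ and $B_{2}$ all comarks are $1$, so $a_{\max}^{\vee}=a_{\min}^{\vee}=1$, $\mu_{0}=l\,\omega_{i_{0}}$, $|\mu_{0}|=l$, and the bound reads $d(l)\ge l$; combined with the upper bound, $d(l)=l$ for these types.

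\medskip

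I expect the real work to be in the first paragraph rather than in the two estimates: making fully rigorous that the subfactor built from the reducible, self-conjugate object $X=\bigoplus_{i}\rho_{i}$ has principal graph $\Gamma$ and that its depth equals the eccentricity of $\mathbf{1}$ --- i.e.\ identifying each relative commutant $N'\cap M_{k}$ with the depth-$k$ truncation of the path algebra of $\Gamma$ --- which is where the realisation/reconstruction apparatus (Hayashi--Yamagami, Popa, Wenzl) has to be invoked and checked carefully. Within the combinatorial core the only delicate points are the triviality of the Verlinde truncation along the staircase, which is exactly the alcove-interiority observation, and keeping the constant $\beta$ honest through the non-simply-laced and exceptional cases; I do not expect either to cause serious trouble.
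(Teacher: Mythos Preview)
Your combinatorial arguments are essentially correct, and the upper bound via the ``staircase'' is exactly the paper's own (Proposition~3.3, first half, together with Lemma~3.4 and Proposition~3.6). There are, however, two genuine differences from the paper that are worth recording.

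\textbf{The subfactor.} The paper does \emph{not} invoke an abstract Hayashi--Yamagami/Popa realisation. It builds the subfactor concretely from the tower $A_k=\End_{R_l(G)}(W^{\otimes k})$ with
\[
W=V(0)\oplus\bigoplus_{i}V(\omega_i),
\]
i.e.\ \emph{including} the trivial summand, takes the GNS completion to get $M$, produces $M_j$ via a second (right) inclusion, and proves $M'\cap M_j\cong A_j$ via commuting squares and Ocneanu compactness (Sections~4--5). The presence of $V(0)$ is what makes $\Pi_h(W^{\otimes k})$ monotone in $k$ and the identification ``depth $=d(l)$'' exact on the nose; with your $X=\bigoplus_i\rho_i$ the sets $\Pi_h(X^{\otimes k})$ need not be monotone (already for $A_1$ there is a parity obstruction), and pinning the depth to the eccentricity without a $\pm1$ ambiguity requires care you have not supplied. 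What you call ``the real work in the first paragraph'' is, in the paper, the bulk of the argument.

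\textbf{The lower bound.} Here you take a genuinely different and in some ways cleaner route. The paper works with the coefficient-sum functional $\varepsilon\bigl(\sum x_i\omega_i\bigr)=\sum x_i$ and proves $\Pi_h(W^{\otimes k})\subset B_{\lfloor\beta' k\rfloor}$ for a type-dependent $\beta'$ obtained by a Cartan-matrix/simplex-method computation (Proposition~3.3); the exact result $\Pi_h(W^{\otimes k})=B_k$ for $A_n,C_n,B_2$ follows from the sign pattern of the Cartan matrix rows. Your bound $|\sigma|\le k\,a^\vee_{\max}$ via the level functional $|\cdot|=(\cdot,\theta)$ avoids the simplex step entirely and recovers $d(l)=l$ for $A_n,C_n,B_2$ equally well (since there $a^\vee_{\max}=1$), but yields different constants elsewhere: better for $B_n$ with $n\ge5$ or $E_7$, worse for $E_8$. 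One small gap: your dominance step ``$\sigma$ is $\le$ a sum of $k$ fundamental weights'' is classical reasoning applied inside $R_l(G)$; you should either cite $N_{\lambda\mu}^{\nu}\le m_{\lambda\mu}^{\nu}$, or (more in the spirit of your upper bound) observe that for $k\,a^\vee_{\max}<l$ every length-$k$ monomial has total level $<l$, so the Kac--Walton truncation is again trivial and the fusion and classical products coincide on the range that matters.
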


The motivation for this result originates from joint work \cite{JY21} of the author with V. Jones on Motzkin algebras, which can be constructed from $\End_G((V_0\oplus V_1)^{\otimes k})$ with $V_0,V_1$ the trivial and fundamental representation of $G=SU(2)$ or $U_q(\mathfrak{gl}_2)$ (see also \cite{BHMotz14}).  
Actually, their work contains an equivalent definition based on planar algebra \cite{Jpalg99}. 
For any level $l\geq 3$, they construct a subfactor $N\subset M$ of depth $l$ such that the bimodules generated from $_{N}L^2(M)_N$ are the $l$-level Verlinde ring of $SU(2)$, or equivalently, of type $A_1$. 

In this paper, we generalize this result to simple simply-connected compact Lie groups or simple complex Lie algebras. 
We also start with the $\mathfrak{g}$-module $V_0\oplus(\oplus_i V(\omega_i))$,  where $V_0$ is the trivial module and each $V(\omega_i)$ the is the irreducible representation with the fundamental weight $\omega_i$. 
It involves the Littlewood-Richardson problem in studying the decomposition of the tensor product of the highest weight representations (see \cite{Kum10}). 
As the trivial module is included, we obtain an increasing sequence of weights set which will finally contain all the weights $\lambda$ such that $(\lambda,\theta)\leq l$, which are the weights in $R_l(G)$ ($\theta$ is the highest root). 
The tower of the endomorphism algebras gives us a family of factors and also the commutants of the subfactors. 
The bimodules are then constructed in a canonical way from these commutants. 
We show the bimodules from $N\subset M$ have the same fusion rule as $R_l(G)$. 
\begin{corollary}
The tensor category generated by the $N$-$N$ bimodules in $L^2(M)$ is the Verlinde ring $R_l(G)$. 
\end{corollary}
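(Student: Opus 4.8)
\textit{Proof sketch.} The plan is to exhibit an explicit unitary tensor functor from $R_l(G)$ into the category of finite-index $N$-$N$ bimodules whose essential image is the fusion category generated by the simple sub-bimodules of ${}_NL^2(M)_N$, and then to check that this functor is an equivalence onto that image. Write $\mathcal{C}=R_l(G)$, let $Y=V_0\oplus\bigoplus_i V(\omega_i)$ be the generating object used in the construction of $N\subset M$, and let $B_k=\End_{\mathcal C}(Y^{\otimes k})$ be the associated tower of finite-dimensional $C^*$-algebras (so that all negligible morphisms have already been divided out in passing to $\mathcal C$), with $M$ and $N$ the $\mathrm{II}_1$ factors obtained by GNS-completing $\bigcup_k B_k$ with respect to the Markov trace, $N$ being the shifted copy built from $1_Y\otimes B_{k-1}$. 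For a simple $Z\in\mathrm{Irr}(\mathcal C)$, set $F(Z)$ to be the $N$-$N$ bimodule obtained by completing $\bigoplus_k\Hom_{\mathcal C}(Z,Y^{\otimes k})$, with the left and right $N$-actions coming from composition with $1_Y\otimes B_{k-1}$ and with $B_k$, respectively; extend $F$ additively. Two things then have to be proved: (i) every simple object of $\mathcal C$ occurs as a summand of some $Y^{\otimes k}$, so $F$ is essentially surjective, and its image is already generated by the summands of ${}_NL^2(M)_N$; and (ii) $F$ is a fully faithful \emph{tensor} functor, i.e.\ $F(Z\otimes W)\cong F(Z)\otimes_N F(W)$ compatibly with associators, and $\End_N(F(Z))\cong\End_{\mathcal C}(Z)$.

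For (i), the classical input is that the fundamental representations generate the representation ring $R(\mathfrak g)$ as a ring: for a dominant weight $\lambda=\sum_i a_i\omega_i$ the tensor product $V(\omega_1)^{\otimes a_1}\otimes\cdots\otimes V(\omega_r)^{\otimes a_r}$ contains the Cartan component $V(\lambda)$ with multiplicity one plus summands $V(\mu)$ with $\mu<\lambda$, so a downward induction in the dominance order writes $[V(\lambda)]$ in terms of products of $[V(\omega_i)]$; this survives the quotient to the finitely many weights with $(\lambda,\theta)\le l$ in $R_l(G)$. Since $V_0$ is a summand of $Y$, we have $Y^{\otimes k}\hookrightarrow Y^{\otimes(k+1)}$ as a subobject, so the sets $\operatorname{supp}(Y^{\otimes k})\subseteq\mathrm{Irr}(\mathcal C)$ are nondecreasing and therefore stabilize; by the previous sentence the stable value is all of $\mathrm{Irr}(\mathcal C)$, and the tensor-power computations underlying Theorem 1.1 pin down the first $k$ at which this occurs in terms of $d(l)$. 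Hence $F$ is essentially surjective. That the image is already generated by the sub-bimodules of ${}_NL^2(M)_N$ follows because the decomposition data behind Theorem 1.1 exhibits $F(V(\omega_i))$ (equivalently $F(V(\omega_{i^*})))$, together with $F(V_0)=L^2(N)$, among the simple summands of ${}_NL^2(M)_N$ for every $i$; closing under tensor product and subobjects and invoking the generation statement then yields all of $\operatorname{Im}F$.

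For (ii), the point is that the relative commutants along the Jones tower recover the categorical endomorphism algebras: $N'\cap M_k\cong\End_{\mathcal C}(Y^{\otimes(k+1)})$, and more importantly the algebras $M'\cap M_k$, $N'\cap M_k$ with their inclusions, traces and conditional expectations reproduce the path algebra on the fusion graph of $Y$ equipped with its categorical trace. This is the same bookkeeping that identifies Connes' relative tensor product $\otimes_N$ of the bimodules $F(Z)$ with the monoidal product $\otimes$ of $\mathcal C$, with associativity of $\otimes_N$ matching the associator of $\mathcal C$ because both are governed by the same $6j$-symbols. Full faithfulness reduces to the equality $\dim\Hom_N(F(Z),F(W))=\dim\Hom_{\mathcal C}(Z,W)$, which follows from faithfulness of the Markov trace on each $B_k$ — and here it is essential that we work inside the truncated category $\mathcal C=R_l(G)$, since the trace is faithful only after the negligible ideal has been removed. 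Combining (i) and (ii), $F$ restricts to an equivalence of tensor categories from $R_l(G)$ onto the fusion category generated by the $N$-$N$ bimodules contained in $L^2(M)$.

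The main obstacle I expect is the monoidal content of (ii): one must match Connes fusion of the infinite-dimensional bimodules $F(Z)$, together with the full associativity data, with $\otimes$ in $\mathcal C$, and do this in the \emph{reducible} setting, since $N'\cap M\cong\End_{\mathcal C}(Y)\ne\mathbb{C}$, so the standard principal-graph dictionary must be deployed in its multi-vertex form. Organizing the path-algebra model so that the trace, the inclusions and the tensor structure are simultaneously visible — and checking that positivity and faithfulness persist through the GNS completion — is the technical heart; once that framework is in place, the identification with $R_l(G)$ is forced by the generation statement in (i).
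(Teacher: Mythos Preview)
Your overall strategy---build a tensor functor between $R_l(G)$ and the bimodule category and verify it is an equivalence---matches the paper's, but you run the functor in the opposite direction and frame it more abstractly. The paper constructs $\Psi\colon\text{Bimod}(M,M_1)\to R_l(G)$ concretely: it first identifies $M'\cap M_{2j}\cong A_{2j}=\End(W^{\otimes 2j})$ via Ocneanu compactness, represents each irreducible bimodule as $\pi_j(p)L^2(M_j)$ for a minimal projection $p\in A_{2j}$, and sets $\Psi(\pi_j(p)L^2(M_j))=p(W^{\otimes 2j})$. Tensor-preservation is handled by an explicit Bisch--D\"urr formula for Connes fusion in terms of projections, $\pi_j(p)L^2(M_j)\otimes_M\pi_k(q)L^2(M_k)\cong\pi_{j+k}\bigl(p\,\sh_{2j}(q)\bigr)L^2(M_{j+k})$, together with the identification of the shift $\sh_{2j}(q)$ with $\id_{W^{\otimes 2j}}\otimes q$. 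This sidesteps the path-algebra and $6j$-symbol bookkeeping you flag as the main obstacle; indeed the paper only verifies that the fusion \emph{rules} agree and does not track the full associator data you are aiming for.

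There is, however, a concrete gap in your construction of $F(Z)$. As written, the completion of $\bigoplus_k\Hom_{\mathcal C}(Z,Y^{\otimes k})$ with ``left and right $N$-actions coming from composition with $1_Y\otimes B_{k-1}$ and with $B_k$'' does not yield an $N$-$N$ bimodule: both operations are post-composition by subalgebras of $B_k$ acting on the target, hence are left actions on the same side, and the action by all of $B_k$ generates $M$, not $N$. The model that actually works---and which the paper's projection picture encodes---is to realise $F(Z)$ as the cut-down $\pi_j(p_Z)L^2(M_j)$ for a minimal projection $p_Z$ in the simple summand of $A_{2j}$ indexed by $Z$, or equivalently an inductive-limit model built on spaces such as $\Hom_{\mathcal C}(Y^{\otimes k},Z\otimes Y^{\otimes k})$, where the two commuting $N$-actions come from tensoring on the two outer $Y$-strings. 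Once $F$ is set up this way your (i) and (ii) go through, and the paper's shift-map computation gives a cleaner route to the monoidal part of (ii) than matching $6j$-symbols directly.
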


In Section \ref{svld}, we have a brief review of the Verlinde rings.
In Section \ref{stenfund}, we construct the Verlinde ring from the direct sum of fundamental representations. 
In Section \ref{stower} and Section \ref{sfactor}, we construct the subfactors and describe the commutants. 
In Section \ref{sbimod1}, we construct a family of bimodules and describe their fusion rule. 

{\it Acknowledgements} 
The author is grateful for the comments from D. Bisch, A. Jaffe, and Y. Kawahigashi.  
This work was supported in part by the ARO Grant W911NF-19-1-0302 and the ARO MURI Grant W911NF-20-1-0082.

\section{The Verlinde Ring as a Quotient}\label{svld}

We first have a short review of some facts about complex semisimple Lie algebra. 
We mainly refer to \cite{Hum72}, for the basic Lie theory and to \cite{Beau93,Kum22} for the Verlinde rings. 

Let $G$ be a compact, simply-connected, simple Lie group and $\mathfrak{g}=\mathfrak{g}_{\mathbb{C}}$ be the complexified simple Lie algebra.
Let $\mathfrak{t}$ be the Cartan subalgebra of $\mathfrak{g}$. 
Denote the set of integral weights and the set of dominant integral weights by $P$ and $D$ respectively. 
Let $\Phi$ be the set of roots and $\Delta=\{\alpha_i,\dots,\alpha_n\}$ be the set of simple roots, where $n=\dim_{\mathbb{C}} \mathfrak{t}$.
Let $W=\langle s_{\alpha_1},\dots,s_{\alpha_n}\rangle$ be the Weyl group with each $s_{\alpha_i}$ the reflection given by the simple root $\alpha_i$. 

Let $\theta$ be the highest root and $\rho$ be the half-sum of positive roots. 
Let $(\cdot,\cdot)$ be the inner product on $\mathfrak{t}\cong \mathfrak{t}^*$ which is normalized in the sense $\|\theta\|^2=(\theta,\theta)=2$. 
Let $\alpha^{\vee}\colon=\frac{2\alpha}{(\alpha,\alpha)}$ be the coroot of $\alpha\in \Phi$.
Define $\langle\beta,\alpha\rangle=(\beta,\alpha^{\vee})=\frac{2(\beta,\alpha)}{(\alpha,\alpha)}$ for $\alpha,\beta \in \Phi$ (and also defined on $P$). 
Let $\omega_1,\dots,\omega_n$ be the fundamental weights, i.e., $\langle\omega_i,\alpha_j\rangle=\delta_{i,j}$ for all $1\leq i,j\leq n$.

Define $R(G)$ (or $R(\mathfrak{g})$) to be the representation ring of $G$ (or $\mathfrak{g}$). 
It is well-known that $R(G)\cong R(\mathfrak{g})=\mathbb{Z}[D]$, i.e., the $\mathbb{Z}$-linear span of the isomorphism classes of highest weight representations indexed by $D$.

Let $V(\lambda)$ be the irreducible representation with the highest weight $\lambda$, which will also stand for its isomorphism class in $R(\mathfrak{g})$. 
For a finite-dimensional $V$ representation of $\mathfrak{g}$, 
we let
\begin{equation*}
\begin{aligned}
    & \Pi(V)= \text{the set of all weights of } V;\\
    & \Pi_h(V)= \text{the set of all highest weights of the simple summands of } V.
\end{aligned}
\end{equation*}
For instance, if $V=\oplus_{\lambda\in D} m_{\lambda}\cdot V(\lambda)$ as the decomposition into irreducible representations, we have $\Pi_h(V)=\{\lambda\in D|m_{\lambda}\neq 0\}$. 
For each $1\leq i\leq n$, let $V(\omega_i)$ be the fundamental representation, which is the irreducible representation with the highest weight $\omega_i$.


Given an integer $l\geq 1$, we define
\begin{itemize}
 \item the {\it dominant integral weights at level $l$}
 \begin{center}
     $D_l=\{\lambda\in D|(\lambda,\theta)\leq l\}$,
 \end{center}
    \item the {\it affine wall} 
    \begin{center}
        $ H_{\alpha,m}=\{\lambda\in P|(\lambda,\alpha)=m(l+h^{\vee})\}$,
    \end{center}
    for $\alpha\in \Phi$, $m\in \mathbb{Z}$. 
    Let
    \begin{center}
        $H=\cup_{\alpha\in \Phi,m\in\mathbb{Z}}H_{\alpha,m}$. 
    \end{center}
    \item the {\it affine Weyl group at level} $l$
    \begin{center}
    $W_l=$ the group generated by $W$ and the map $\lambda\mapsto \lambda+(l+h^{\vee})\theta$.
    \end{center} 
    Note the action of $W_l$ on $P_{\mathbb{R}}=P\otimes_{\mathbb{Z}}\mathbb{R}$ is defined by $w*\lambda=w(\lambda+\rho)-\rho$ for $w\in W_l$ and $\lambda\in P_{\mathbb{R}}$. 
    We also define the set of minimal-length coset representatives in $W_l/W$ by $W_l'$. 
\end{itemize}

We define $I_l\subset R(\mathfrak{g})$ be the ideal spanned over $\mathbb{Z}$ by
\begin{enumerate}
    \item $V(\lambda)$ with $\lambda\in D$ and $\lambda+\rho\in H$,
    \item $V(w^{-1}*\mu)-\epsilon(w)V(\mu)$ with $\mu\in D_l$ and $w\in W_l'$. 
\end{enumerate}
The {\it Verlinde ring at level $l$} of $G$ (or $\mathfrak{g}$) is defined to be the quotient ring
\begin{center}
    $R_l(G)=R(G)/I_l$ (or $R_l(\mathfrak{g})=R(\mathfrak{g})/I_l$). 
\end{center}
We will denote the image of the isomorphism class of $V(\lambda)$ in the quotient ring by $[V(\lambda)]$.
We denote the quotient map by $\pi_l$ and 
the multiplication (tensor product) in $R_l(\mathfrak{g})$ by $\otimes_l$. 

The following result is well-known (see \cite{Beau93}, \cite{Kum22} Chapter 4 and \cite{FJKLM04} Chapter 2.3). 
\begin{proposition}\label{plfus}
\begin{enumerate}
    \item $R_l(\mathfrak{g})$ has a $\mathbb{Z}$-basis $\{[V(\lambda)]|\lambda\in D_l\}$;
    \item $\pi_l(V(\lambda))=[V(\lambda)]$ for $\lambda\in D_l$; 
    \item $[V(\lambda)]\otimes_{l}[V(\mu)]=[V(\lambda)\otimes V(\mu)]$ if $\lambda+\mu\in D_l$. 
\end{enumerate}
\end{proposition}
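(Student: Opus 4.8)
Proposition~\ref{plfus} is classical (see \cite{Beau93,Kum22}); I sketch the standard argument. The content is part (1), which follows from matching bounds on $\rk_{\mathbb Z}R_l(\mathfrak g)$, after which (2) and (3) are formal. For the upper bound, note $R(\mathfrak g)=\mathbb Z[D]$ has $\mathbb Z$-basis $\{V(\lambda):\lambda\in D\}$, so it suffices to show every $V(\lambda)$ reduces modulo $I_l$ to $0$ or to $\pm[V(\mu)]$ with $\mu\in D_l$. The relevant geometry is that the action of $W_l$ on $P_{\mathbb R}$ is the action of the affine reflection group with reflecting hyperplanes $H_{\alpha,m}$, so it permutes the alcoves (components of $P_{\mathbb R}\setminus H$) simply transitively, and that $\lambda\mapsto\lambda+\rho$ identifies $D_l$ with the lattice points of the open fundamental alcove (using $(\rho,\theta)=h^{\vee}-1$). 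Hence, for $\lambda\in D$: if $\lambda+\rho\in H$ then $\lambda+\rho$ lies on an alcove wall and $V(\lambda)\in I_l$ by the first family of generators; if $\lambda+\rho\notin H$ then $\lambda+\rho$ is regular and strictly dominant, so there are unique $w\in W_l'$ and $\mu\in D_l$ with $\lambda=w^{-1}*\mu$ (the standard bijection $\{\lambda\in D:\lambda+\rho\notin H\}\cong W_l'\times D_l$), whence $V(\lambda)\equiv\epsilon(w)V(\mu)\pmod{I_l}$ by the second family of generators. So $\{[V(\mu)]:\mu\in D_l\}$ spans $R_l(\mathfrak g)$, giving $\rk_{\mathbb Z}R_l(\mathfrak g)\le|D_l|$.

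For the lower bound (linear independence) one exhibits $|D_l|$ ring homomorphisms $R_l(\mathfrak g)\to\mathbb C$ whose matrix of values on the spanning set is invertible: the Verlinde evaluations. For each regular $W$-orbit $\xi$ in the finite abelian group $P/(l+h^{\vee})Q^{\vee}$ (with $Q^{\vee}$ the coroot lattice) one sends $V(\lambda)\mapsto J(\lambda+\rho)(\xi)/J(\rho)(\xi)$, where $J(\nu)=\sum_{w\in W}\epsilon(w)e^{w\nu}$ is the Weyl numerator. Each such map annihilates $I_l$: a generator $V(\lambda)$ with $\lambda+\rho\in H$ because $J(\lambda+\rho)$ vanishes at these $\xi$ (which are fixed by a reflection of $W_l$, under which $J$ is anti-invariant), and a generator $V(w^{-1}*\mu)-\epsilon(w)V(\mu)$ again by the $W_l$-anti-invariance of $J$ at these points; so the evaluations descend to $R_l(\mathfrak g)$. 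Non-degeneracy of the resulting $|D_l|\times|D_l|$ matrix --- essentially the Verlinde $S$-matrix --- follows from orthogonality of characters of the finite abelian group involved, and may be quoted from \cite{Beau93,Kum22}. Together with the upper bound this proves (1); (2) is then immediate, since for $\mu\in D_l$ the element $\pi_l(V(\mu))$ is by construction the $\mu$-th vector of this basis.

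For (3), $\otimes_l$ is by definition the multiplication induced by $\otimes$ on the quotient, so $[V(\lambda)]\otimes_l[V(\mu)]=\pi_l(V(\lambda)\otimes V(\mu))$; writing $V(\lambda)\otimes V(\mu)=\bigoplus_\nu N_{\lambda\mu}^\nu V(\nu)$, every $\nu$ occurring satisfies $\nu\preceq\lambda+\mu$, hence $(\nu,\theta)\le(\lambda+\mu,\theta)$ because $(\beta,\theta)\ge0$ for every sum of positive roots $\beta$ (the highest root $\theta$ is dominant). So if $\lambda+\mu\in D_l$ then all such $\nu$ lie in $D_l$, no reduction modulo $I_l$ occurs, and $\pi_l(V(\lambda)\otimes V(\mu))=\sum_\nu N_{\lambda\mu}^\nu[V(\nu)]$ is exactly the image of the classical tensor product. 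The one substantive ingredient in this argument is the non-degeneracy of the evaluation ($S$-)matrix in the linear-independence step; the spanning bound and parts (2) and (3) amount to bookkeeping with the affine Weyl group and its alcove decomposition.
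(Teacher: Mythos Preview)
The paper does not actually prove this proposition: it is stated as well known, with references to \cite{Beau93}, \cite{Kum22} Chapter~4 and \cite{FJKLM04} Chapter~2.3, and no argument is given. Your sketch is a correct outline of the standard proof contained in those references --- reducing every $V(\lambda)$ into the fundamental alcove via the $W_l$-action to obtain spanning, and establishing linear independence via the Verlinde evaluation homomorphisms (the non-degeneracy of the $S$-matrix). So there is no discrepancy in approach; you have simply supplied what the paper outsources.

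One remark worth making: your treatment of part~(3) correctly identifies that the literal identity $[V(\lambda)]\otimes_l[V(\mu)]=\pi_l(V(\lambda)\otimes V(\mu))$ is tautological, and that the substantive content is that all constituents $\nu$ of $V(\lambda)\otimes V(\mu)$ already lie in $D_l$ when $\lambda+\mu\in D_l$, so no affine-Weyl reduction is needed. This is exactly the argument the paper itself gives (for a slight generalization) in the next proposition, Proposition~\ref{pdlwts}, so your handling of~(3) is fully aligned with the paper's own methods.
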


Indeed, these $[V(\lambda)]$ gives the family of positive energy representations of the loop group $LG=C^{\infty}(S^1,G)$ at level $l$ (see \cite{PSloop}). 
In the following sections, we will treat them as $LG$-modules. 
We will use the same notations as above for the weights and representations of $LG$ if there is no confusion.
For instance, $\Pi_h([V])$ will denote the highest weights of the irreducible $LG$-modules in the decomposition of a $LG$-module $[V]$.

\begin{proposition}\label{pdlwts}
Suppose $\lambda_1,\cdots,\lambda_t\in D$ such that $\sum_{1\leq i\leq t}\lambda_i\in D_l$. 
We have $\Pi_h(\otimes_{1\leq i\leq t}V(\lambda_i))\subset D_l$. 
\end{proposition}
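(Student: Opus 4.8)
The plan is to reduce the claim to two elementary facts from highest-weight theory together with the dominance of the highest root $\theta$, after which the conclusion is a one-line monotonicity estimate.

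First I would recall the two facts. (i) For a dominant weight $\nu\in D$, every $\eta\in\Pi(V(\nu))$ has the property that $\nu-\eta$ is a nonnegative integer combination of the simple roots $\alpha_1,\dots,\alpha_n$; this is the standard statement that all weights of $V(\nu)$ lie below $\nu$ in the root order. (ii) The weight set of a tensor product is the sum of the weight sets of the factors: $\Pi(V\otimes V')=\{\eta+\eta'\colon \eta\in\Pi(V),\ \eta'\in\Pi(V')\}$, because a weight vector of $V\otimes V'$ can be taken to be a tensor product of weight vectors. Both are in \cite{Hum72}.

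Next I would take an arbitrary $\mu\in\Pi_h\big(\bigotimes_{1\leq i\leq t}V(\lambda_i)\big)$ and observe that, being the highest weight of a simple summand, $\mu$ is a dominant integral weight, so $\mu\in D$, and that $\mu$ is in particular a weight of $\bigotimes_i V(\lambda_i)$. Applying (ii) iteratively, I can write $\mu=\sum_{i}\eta_i$ with $\eta_i\in\Pi(V(\lambda_i))$; applying (i) to each factor and setting $\lambda:=\sum_i\lambda_i$, I get that
\begin{equation*}
\lambda-\mu=\sum_{1\leq i\leq t}(\lambda_i-\eta_i)
\end{equation*}
is a nonnegative integer combination of the simple roots.

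Finally I would pair this identity with $\theta$. Since $\theta$ is the highest root of an irreducible root system it is a dominant weight, hence $\langle\theta,\alpha_i^\vee\rangle\geq 0$, and since $(\alpha_i,\alpha_i)>0$ this gives $(\theta,\alpha_i)\geq 0$ for every simple root $\alpha_i$. Therefore $(\theta,\lambda-\mu)\geq 0$, i.e. $(\mu,\theta)\leq(\lambda,\theta)$, and the right-hand side is $\leq l$ because $\lambda=\sum_i\lambda_i\in D_l$ by hypothesis. Combined with $\mu\in D$, the inequality $(\mu,\theta)\leq l$ is exactly the assertion $\mu\in D_l$, which proves the proposition. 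I do not anticipate a genuine obstacle: the statement is elementary, and the only point that needs a little care is invoking the dominance of $\theta$, which is precisely what makes the linear functional $(\,\cdot\,,\theta)$ monotone for the root order.
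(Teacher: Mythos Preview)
Your proof is correct and follows essentially the same route as the paper's: write any highest weight of the tensor product as $\sum_i\lambda_i$ minus a nonnegative integer combination of simple roots, then use the dominance of $\theta$ to get $(\alpha_j,\theta)\geq 0$ and conclude the monotonicity estimate $(\mu,\theta)\leq(\sum_i\lambda_i,\theta)\leq l$. The only difference is cosmetic---you spell out the two ingredients (i) and (ii) that justify the shape of $\mu$, whereas the paper states that shape directly.
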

\begin{proof}
Note any weight in $\Pi_h(\otimes_{1\leq i\leq t}V(\lambda_i))$ must be of the form $\sum_{1\leq i\leq t}\lambda_i-\sum_{1\leq j\leq n}y_i\cdot \alpha_i$ with each $y_i\in \mathbb{Z}_{\geq 0}$.  
It suffices to show
\begin{center}
    $(\left(\sum_{1\leq i\leq t}\lambda_i-\sum_{1\leq j\leq n}y_i\cdot \alpha_i\right),\theta)\leq l$. 
\end{center}
As $\sum_{1\leq i\leq t}\lambda_i\in D_l$, we have $(\sum_{1\leq i\leq t}\lambda_i,\theta)\leq l$. 
It then suffices to show $(\alpha_j,\theta)\geq 0$ for each $1\leq j\leq n$, which follows the fact that $\theta\in D$. 
\end{proof}

\section{Tensor Products of Fundamental Representations}\label{stenfund}

In this section, as $\mathfrak{g}$-modules, we consider how the fundamental representations  generate the irreducible representations of level $l$, which are the ones with highest weights in $D_l=\{\lambda\in D|(\lambda,\theta)\leq l\}$. 
Then we move to the case of $LG$-modules and the Verlinde ring. 

Consider the $\mathfrak{g}$-module: 
\begin{center}
    $W=V(0)\oplus(\oplus_{1\leq i\leq n}V(\omega_i))$,
\end{center}
which is the direct sum of trivial module $V(0)=\mathbb{C}$ and all fundamental representations $V(\omega_i)$'s. 
We have the following increasing sequence of sets of dominant weights:
\begin{center}
$\Pi_h(W^{\otimes 0})\subset \Pi_h(W^{\otimes 1})\subset \Pi_h(W^{\otimes 2})\subset \cdots\subset\Pi_h(W^{\otimes k})\subset \Pi_h(W^{\otimes k+1})\subset\cdots$,
\end{center}
where $\Pi_h(W^{\otimes 0})=\{0\}$ and $\Pi_h(W^{\otimes 1})=\{0,\omega_1,\cdots,\omega_n\}$ by the definition. 
Observe $D_l$ is a finite set. 
By Proposition \ref{plfus} and the fact that fundamental representations generate $R(\mathfrak{g})$, we know there exists some $d(l)$ depending on the simple type of the Lie algebra $\mathfrak{g}$ such that
\begin{center}
  $d(l)=d_{\mathfrak{g}}(l)=\min\{k\geq 0|D_l\subset \Pi_h(W^{\otimes k})\}$.  
\end{center}
This is equivalent to say: 
\begin{lemma}\label{liffdl}
For each $l\geq 0$, there is an integer $d(l)\geq 0$ such that
\begin{center}
$D_l\subset \Pi_h(W^{\otimes k})$ if and only if $k\geq d(l)$. 
\end{center}
\end{lemma}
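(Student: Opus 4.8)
The plan is to set $S_l = \{k \geq 0 \mid D_l \subset \Pi_h(W^{\otimes k})\}$ and to show that $S_l$ is a nonempty, upward-closed subset of $\mathbb{Z}_{\geq 0}$; then $d(l) := \min S_l$ exists and $S_l = \{k \mid k \geq d(l)\}$, which is exactly the claimed equivalence.

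First I would prove that the sequence $(\Pi_h(W^{\otimes k}))_{k \geq 0}$ is increasing, which makes $S_l$ upward closed. Since the trivial module $V(0)$ is a direct summand of $W$, the module $W^{\otimes k} \cong W^{\otimes k} \otimes V(0)$ is a direct summand of $W^{\otimes(k+1)} = W^{\otimes k} \otimes W$. Hence every simple summand of $W^{\otimes k}$ occurs in $W^{\otimes(k+1)}$, i.e. $\Pi_h(W^{\otimes k}) \subseteq \Pi_h(W^{\otimes(k+1)})$, and by induction $\Pi_h(W^{\otimes j}) \subseteq \Pi_h(W^{\otimes k})$ whenever $j \leq k$. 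In particular $k_0 \in S_l$ forces $k \in S_l$ for all $k \geq k_0$.

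Next I would show $S_l \neq \emptyset$. Fix $\lambda \in D_l \subseteq D$ and write $\lambda = \sum_{1 \leq i \leq n} c_i \omega_i$ with $c_i \in \mathbb{Z}_{\geq 0}$, which is possible since the fundamental weights generate $D$. In the finite-dimensional module $T_\lambda := \bigotimes_{1 \leq i \leq n} V(\omega_i)^{\otimes c_i}$, the tensor product of the highest weight vectors of the factors has weight $\sum_i c_i \omega_i = \lambda$, while every weight of $T_\lambda$ is of the form $\lambda - \sum_j y_j \alpha_j$ with $y_j \in \mathbb{Z}_{\geq 0}$; thus $\lambda$ is a maximal weight of $T_\lambda$, so $V(\lambda)$ occurs as a simple summand of $T_\lambda$ (its Cartan component). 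Since $T_\lambda$ is itself a direct summand of $W^{\otimes m_\lambda}$ with $m_\lambda := \sum_i c_i$, we get $\lambda \in \Pi_h(W^{\otimes m_\lambda})$. As $D_l$ is a finite set, $K := \max_{\lambda \in D_l} m_\lambda$ then satisfies $D_l \subset \Pi_h(W^{\otimes K})$ by the monotonicity above, so $K \in S_l$.

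Combining the two steps, $S_l$ is nonempty and upward closed, so $d(l) = \min S_l$ is well defined and $D_l \subset \Pi_h(W^{\otimes k}) \iff k \in S_l \iff k \geq d(l)$. There is no serious obstacle here; the only point needing a moment's care is the claim that $V(\lambda)$ appears in $\bigotimes_i V(\omega_i)^{\otimes c_i}$ when $\lambda = \sum_i c_i \omega_i$, which is the standard maximal-weight-component argument used above, together with the observation that it is the trivial summand of $W$ that produces the inclusions $\Pi_h(W^{\otimes k}) \subseteq \Pi_h(W^{\otimes(k+1)})$.
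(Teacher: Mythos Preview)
Your proof is correct and follows essentially the same approach as the paper: the paper defines $d(l)$ as $\min\{k\geq 0\mid D_l\subset \Pi_h(W^{\otimes k})\}$ after noting that the sequence $\Pi_h(W^{\otimes k})$ is increasing (because $V(0)\subset W$), that $D_l$ is finite, and that the fundamental representations generate $R(\mathfrak{g})$. Your Cartan-component argument simply makes this last point explicit, so the two arguments coincide in substance.
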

Then we pass to $LG$-modules at level $l$, where their highest weights are always contained in $D_l$.  
We will prove (see Corollary \ref{cdldl})
\begin{center}
    $d(l)=d_{\mathfrak{g}}(l)=\min\{k\geq 0|D_l= \Pi_h([W]^{\otimes k})\}$.
\end{center}
The rest of this section is mainly devoted to the following result:

\begin{theorem}\label{tdl}
\begin{enumerate}
    \item  For type $A_n$, $C_n$ or $B_2$, $d(l)=l$;
    \item For type $B_n$ ($n\geq 3$), $\ceil{\frac{2l}{n}}\leq d(l)\leq l$;
    \item For type $D_n$ ($n\geq 4$), $\ceil{\frac{2l}{n-1}}\leq d(l)\leq l$;
    \item For type $E_6$, $E_7$ or $E_8$, $\ceil{\frac{l}{3}}\leq d(l)\leq l$, $\ceil{\frac{l}{5}}\leq d(l)\leq l$, $\ceil{\frac{4l}{15}}\leq d(l)\leq \floor{\frac{l}{2}}$ respectively;
    \item For type $F_4$, $\ceil{\frac{2l}{5}}\leq d(l)\leq l$;
    \item For type $G_2$, $\ceil{\frac{2l}{3}}\leq d(l)\leq l$. 
\end{enumerate}
\end{theorem}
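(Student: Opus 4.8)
The plan is to analyze the quantity $d(l) = \min\{k \geq 0 : D_l \subset \Pi_h(W^{\otimes k})\}$ by separating the two inequalities $d(l) \leq l$ and $d(l) \geq \beta \cdot l$, where the upper bound is uniform and the lower bounds are type-by-type.

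\textbf{Upper bound $d(l) \leq l$.} First I would establish that every $\lambda \in D_l$ appears in $\Pi_h(W^{\otimes l})$. The key observation is that any $\lambda \in D$ can be written as $\lambda = \sum_{i=1}^n c_i \omega_i$ with $c_i \in \mathbb{Z}_{\geq 0}$, and when $\lambda \in D_l$ one has $\sum_i c_i \langle \omega_i, \theta\rangle \le (\lambda,\theta) \le l$; since $\langle\omega_i,\theta\rangle \geq 1$ for all $i$ (because $\theta$ is dominant and nonzero, so $(\omega_i,\theta) > 0$, and in the normalization $\|\theta\|^2=2$ this pairing is a positive integer), we get $\sum_i c_i \leq l$. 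Now $V(\lambda)$ is a summand of $\bigotimes_i V(\omega_i)^{\otimes c_i}$ (the Cartan component of the tensor product of highest-weight vectors), a tensor product of $\sum_i c_i \le l$ fundamental representations. Padding with $\le l - \sum_i c_i$ copies of the trivial module $V(0)$, which is a summand of $W$, shows $V(\lambda)$ is a summand of $W^{\otimes l}$, i.e. $\lambda \in \Pi_h(W^{\otimes l})$. Hence $d(l) \le l$. For type $E_8$ the sharper bound $d(l) \leq \lfloor l/2 \rfloor$ requires noting that $\langle \omega_i, \theta \rangle \geq 2$ for every fundamental weight of $E_8$ (this can be read off the marks of the extended Dynkin diagram, all of which are $\geq 2$), so in fact $\sum_i c_i \leq l/2$ there.

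\textbf{Lower bounds.} For the lower bounds I would exhibit, for each type, a specific dominant weight $\lambda \in D_l$ that cannot lie in $\Pi_h(W^{\otimes k})$ for $k$ too small, which forces $d(l) > k$. The natural candidates are $\lambda = c\,\omega_j$ for a well-chosen fundamental weight $\omega_j$ and the largest $c$ with $c\langle\omega_j,\theta\rangle \leq l$. The weight-counting constraint is: if $\mu \in \Pi_h(W^{\otimes k})$ then $\mu = \sum_{m=1}^k \nu_m - (\text{nonneg. comb. of simple roots})$ where each $\nu_m \in \{0,\omega_1,\dots,\omega_n\}$; pairing with a suitable dominant functional (e.g. the fundamental coweight dual to $\alpha_j$, or $\theta$ itself, or the sum of all fundamental coweights) and using that subtracting simple roots only decreases such a pairing, one bounds how large the coefficient of $\omega_j$ in $\mu$ can be as a function of $k$. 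Concretely, pairing with $\theta^\vee$ (or comparing marks) shows that to build a highest weight with $(\lambda,\theta)$ large one needs many tensor factors, because each $\omega_i$ contributes at most $\max_i \langle\omega_i,\theta\rangle$ to this pairing while simple-root subtraction can only reduce it — but one must be more careful since simple-root subtraction is what actually generates new highest weights, so the honest argument pairs against a coweight $\varpi$ chosen so that $\langle\alpha_i,\varpi\rangle \geq 0$ for all $i$ and $\langle\omega_i,\varpi\rangle$ is small for all $i$ but $\langle\lambda,\varpi\rangle$ is forced to be large for the target $\lambda$. The ratio $\beta$ in each case is then $\big(\max_i \langle\omega_i,\theta\rangle\big)^{-1}$ or a closely related combinatorial ratio extracted from the marks/comarks; e.g. for $G_2$ one of the fundamental weights pairs with $\theta$ to give $3$, yielding $\beta = 2/3$ after the padding analysis, and for $E_7$ the relevant mark is $5$, giving $\beta = 1/5$, etc.

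\textbf{Main obstacle.} The genuinely delicate part is the lower bound: the inclusion $V(\lambda) \subset W^{\otimes k}$ is governed by Littlewood–Richardson-type multiplicity positivity, and it is \emph{not} a priori clear that taking many tensor factors cannot produce a high-level highest weight more efficiently than the "Cartan component" heuristic suggests — one must rule out that cancellation among simple-root subtractions lets a short tensor word reach a far-out dominant weight. I expect to handle this by choosing, for each type, an explicit linear functional $\varphi$ on $P_\mathbb{R}$ that is nonnegative on all simple roots and on which the target weight $\lambda$ has large value while every $\omega_i$ has value $\leq 1$ (or $\leq$ some small constant); then $\varphi(\mu) \leq \varphi(\sum \nu_m) \leq k$ for any $\mu \in \Pi_h(W^{\otimes k})$, whereas $\varphi(\lambda)$ grows linearly in $l$, giving the stated $d(l) \geq \beta l$. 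Verifying the existence of such $\varphi$ and computing $\beta$ reduces to finite data (the marks and comarks of each affine Dynkin diagram), so the proof will proceed case by case through the six families, with types $A_n$, $C_n$, $B_2$ being exactly the cases where the upper and lower constructions meet at $d(l) = l$.
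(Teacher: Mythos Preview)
Your upper-bound argument is correct and coincides with the paper's: writing $\lambda=\sum c_i\omega_i\in D_l$ and using $(\omega_i,\theta)=c_i\|\alpha_i\|^2/2\geq 1$ (with $\geq 2$ for $E_8$) to get $\sum c_i\leq l$, then realizing $V(\lambda)$ as the Cartan component of a tensor product of at most $l$ fundamentals padded by copies of $V(0)$, is exactly the combination of Lemma~\ref{lbk}, Proposition~\ref{ptypelie}, and the inclusion $B_k\subset\Pi_h(W^{\otimes k})$ from Proposition~\ref{pBkwt}.

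For the lower bound your route diverges from the paper's. The paper works throughout with the single functional $\varepsilon(\sum x_i\omega_i)=\sum x_i$. Since $\varepsilon(\alpha_j)=\sum_i\langle\alpha_j,\alpha_i\rangle$ is a Cartan-matrix row sum, it is nonnegative precisely in types $A_n,C_n,B_2$, giving $\Pi_h(W^{\otimes k})=B_k$ there; in the remaining types some row sum is negative, and the paper compensates by imposing the dominance constraint $\vec y A\leq \vec x$ on the subtracted roots and solving the resulting linear program (invoking the ``simplex method'') to obtain $\Pi_h(W^{\otimes k})\subset B_{\lfloor\beta k\rfloor}$ with the constants of Proposition~\ref{pBkwt}. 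You instead propose to abandon $\varepsilon$ and search for a \emph{different} dominant functional $\varphi$, so that $\varphi(\alpha_i)\geq 0$ holds automatically and no LP is needed. This is a legitimate dual strategy and can even be sharper: for instance $\varphi=(\cdot,\omega_1)$ already yields $d(l)\geq 2l/3$ for $G_2$ and $d(l)\geq l$ for $B_n$, improving on the paper's stated bounds.

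The gap is that you have not actually exhibited such a $\varphi$, and your heuristic for locating it is off. Pairing with $\theta$ gives $\varphi(\mu)\leq k\cdot\max_i(\omega_i,\theta)$, but $\max_i(\omega_i,\theta)$ equals $2,3,4,6$ for $G_2,F_4,E_7,E_8$ respectively (the $E_7$ marks are $2,2,3,4,3,2,1$, so the ``relevant mark'' is $4$, not $5$), and this yields only $d(l)\geq l/2,\,l/3,\,l/4,\,l/6$ --- strictly weaker than the stated bounds for $G_2$ and $F_4$. Your numerical values for $G_2$ and $E_7$ appear to be read off the theorem rather than derived. Finding the correct $\varphi$ type by type is exactly the finite computation you defer; it is LP-dual to what the paper does, and neither approach escapes a genuine case analysis (the paper itself writes ``we leave it to the reader to check the linear inequalities''). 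So your plan is workable, but as written the lower-bound half is not yet a proof.
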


We first consider the map $\varepsilon\colon P\to \mathbb{Z}$ given by
\begin{center}
$\varepsilon(\sum_{1\leq i\leq n}  x_{i}\omega_i)=\sum_{1\leq i\leq n} x_i$. 
\end{center}
For each $k\geq 0$, we define a set of dominant weights
\begin{center}
    $B_k=\{\lambda=\sum_{1\leq i\leq n}  x_{i}\omega_i|\sum_{1\leq i\leq n} x_i\leq k,x_i\in \mathbb{Z}_{\geq 0}\}$,
\end{center} 
or, equivalently, $B_k=D\cap \varepsilon^{-1}([0,k])$. 
\begin{example}
For the group $SU(2)$ (type $A_1$), $W=V_0\oplus V(\omega_1)$,  $\varepsilon(\Pi_h(W^{\otimes k}))=\{0,1,\cdots,k\}$ by the Clebsch–Gordan formula. 
We can further show $D_k=\Pi_h(W^{\otimes k})=B_k$.  

As shown in \cite{GPE8} (see page 351), for $E_8$ and its fundamental representation $\omega_5$, $V(\omega_5)\otimes V(\omega_5)$ contains $V(5\omega_1+\omega_7)$. 
Hence $\varepsilon(5\omega_1+\omega_7)=6$ and $6\in\varepsilon(\Pi_h(V(\omega_5)\otimes V(\omega_5)))\subset  \varepsilon({\Pi_h(W^{\otimes 2})})$.
So $\Pi_h(W^{\otimes k})$ may be strictly larger than $B_k$.  
\end{example}

\begin{proposition}\label{pBkwt}
For each simple complex Lie algebra $\mathfrak{g}$, we have $B_k\subset \Pi_h(W^{\otimes k})$. 
For $k\geq 1$, we further obtain
\begin{enumerate}
    \item  For type $A_n$, $C_n$ or $B_2$, $\Pi_h(W^{\otimes k})=B_k$;
    \item For type $B_n$ ($n\geq 3$), $B_k\subset \Pi_h(W^{\otimes k})\subset B_{\floor{\frac{nk}{2}}}$;
    \item For type $D_n$ ($n\geq 4$), $B_k\subset \Pi_h(W^{\otimes k})\subset B_{\floor{\frac{(n-1)k}{2}}}$;
    \item For type $E_6$, $E_7$ or $E_8$, $B_k\subset \Pi_h(W^{\otimes k})\subset B_{3k}$, $B_{5k}$ or $B_{\floor{\frac{15k}{2}}}$ respectively;
    \item For type $F_4$, $B_k\subset \Pi_h(W^{\otimes k})\subset B_{\floor{\frac{5k}{2}}}$;
    \item For type $G_2$, $B_k\subset \Pi_h(W^{\otimes k})\subset B_{\floor{\frac{3k}{2}}}$. 
\end{enumerate}
\end{proposition}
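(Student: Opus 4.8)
The plan is to prove the two inclusions $B_k \subset \Pi_h(W^{\otimes k})$ and the type-dependent upper bounds $\Pi_h(W^{\otimes k}) \subset B_{N}$ separately, since they require genuinely different ideas. For the lower inclusion $B_k \subset \Pi_h(W^{\otimes k})$, I would argue by induction on $k$: a dominant weight $\lambda = \sum x_i \omega_i$ with $\sum x_i \le k$ can be written as $\lambda = \mu + \omega_j$ (or $\lambda = \mu + 0$) where $\mu = \sum x_i'\omega_i$ is dominant with $\varepsilon(\mu) = \varepsilon(\lambda) - 1$ (or $\varepsilon(\lambda)$), so by induction $\mu \in \Pi_h(W^{\otimes (k-1)})$. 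The key point is then that $\lambda$ appears as a highest weight inside $V(\mu) \otimes V(\omega_j)$; this is the standard fact that the Cartan component $V(\mu + \omega_j)$ occurs with multiplicity one in the tensor product of two irreducibles (it is the PRV/Cartan summand), so $\lambda \in \Pi_h(V(\mu) \otimes V(\omega_j)) \subset \Pi_h(W^{\otimes k})$. If $\varepsilon(\lambda) < k$ one pads with copies of $V(0)$. This handles all simple types uniformly.

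For the upper bounds, the idea is to control how much $\varepsilon$ can grow under tensoring. Write any $\nu \in \Pi_h(V(\omega_i) \otimes V(\omega_j))$ as $\nu = \omega_i + \omega_j - \sum_k y_k \alpha_k$ with $y_k \in \mathbb{Z}_{\ge 0}$. Expressing $\alpha_k = \sum_m C_{mk}\,\omega_m$ in terms of the Cartan matrix $C$, one gets $\varepsilon(\nu) = \varepsilon(\omega_i) + \varepsilon(\omega_j) - \sum_k y_k\,\varepsilon(\alpha_k) = 2 - \sum_k y_k\bigl(\sum_m C_{mk}\bigr)$. Now $\sum_m C_{mk} = \langle \alpha_k, 2\rho^\vee\rangle/\cdots$ — more directly, $\sum_m C_{mk}$ is the sum of the $k$-th column of the Cartan matrix, which equals $2 - (\text{number of neighbors of node }k\text{ in the Dynkin diagram, counted with multiplicity})$. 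For the simply-laced types $A, D, E$ and for nodes not adjacent to the arrow, this column sum is $\ge 0$ only at the ends of the diagram and can be negative in the interior; this is exactly why $\varepsilon$ can increase. The quantitative bound $\Pi_h(W^{\otimes k}) \subset B_N$ should follow from: (i) for $k=2$, a finite check that $\varepsilon(\nu) \le N/k \cdot 2$ for every $\nu \in \Pi_h(V(\omega_i)\otimes V(\omega_j))$, using known tensor-product decompositions or the Parthasarathy–Ranga Rao–Varadarajan description of which $y_k$ can occur; and (ii) subadditivity/superadditivity to bootstrap from $k=2$ to general $k$. Concretely I expect to prove a bound of the form $\max\{\varepsilon(\nu) : \nu \in \Pi_h(W^{\otimes k})\} \le c_{\mathfrak g}\cdot k$ where $c_{\mathfrak g}$ is read off from the worst pair of fundamental weights, and then the displayed constants $\lfloor nk/2\rfloor$, $\lfloor (n-1)k/2\rfloor$, $3k$, $5k$, $\lfloor 15k/2\rfloor$, $\lfloor 5k/2\rfloor$, $\lfloor 3k/2\rfloor$ are precisely these $c_{\mathfrak g} k$ rounded down.

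The submultiplicativity step needs care: if $\nu \in \Pi_h(W^{\otimes k})$, then $\nu \in \Pi_h(V(\mu)\otimes V(\sigma))$ for some $\mu \in \Pi_h(W^{\otimes a})$, $\sigma \in \Pi_h(W^{\otimes b})$ with $a+b=k$, and then $\nu = \mu + \sigma - \sum y_k\alpha_k$ so $\varepsilon(\nu) \le \varepsilon(\mu) + \varepsilon(\sigma) + (\text{a bounded defect})$ — but one must check the defect $-\sum y_k(\text{column sum})$ does not accumulate too fast, i.e.\ bound $\sum_k y_k$ in terms of $\varepsilon(\mu), \varepsilon(\sigma)$ via the weight $\nu$ being dominant (all coefficients of $\nu$ in the $\omega$-basis are $\ge 0$). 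This gives an inequality of the form $\varepsilon(\nu) \le f(\varepsilon(\mu)) + f(\varepsilon(\sigma))$ for a suitable $f$, from which the linear-in-$k$ bound follows by induction. The main obstacle will be step (i): pinning down the exact maximal value of $\varepsilon$ on $\Pi_h(V(\omega_i)\otimes V(\omega_j))$ for the exceptional types, which seems to require either explicit tables (e.g.\ the $E_8$ computation cited from \cite{GPE8}) or a careful PRV-type argument identifying the "longest" weight that can appear; getting the constants $3, 5, 15/2$ for $E_6, E_7, E_8$ exactly right is where the real work lies. The simply-laced classical cases $B_n, D_n$ and $F_4, G_2$ should be more tractable since the fundamental representations are small and well understood (wedge powers of the standard/spin representations), so $\varepsilon$ on the relevant tensor squares can be computed by hand.
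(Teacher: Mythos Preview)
Your argument for the inclusion $B_k \subset \Pi_h(W^{\otimes k})$ is essentially the paper's: induction on $k$, using that $V(\mu+\omega_j)$ is the Cartan summand of $V(\mu)\otimes V(\omega_j)$, together with padding by $V(0)$.

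For the upper bounds, however, the paper takes a more direct route that sidesteps all of the obstacles you flag. The key observation is that one never needs to know \emph{which} highest weights actually occur in any tensor product. Any $\mu \in \Pi_h(W^{\otimes k})$ is a dominant weight of the form
\[
\mu = \sum_i x_i\omega_i - \sum_i y_i\alpha_i,\qquad \sum_i x_i \le k,\quad x_i,y_i\in\mathbb{Z}_{\ge 0},
\]
simply because it is a weight of $W^{\otimes k}$ and hence lies below some sum of at most $k$ fundamental weights in the root order. Dominance reads $\vec{y}\,A \le \vec{x}$ with $A$ the Cartan matrix. Thus $\max\{\varepsilon(\mu):\mu\in\Pi_h(W^{\otimes k})\}$ is bounded above by the value of a single linear program in $(\vec{x},\vec{y})$ whose data is the Cartan matrix alone; by homogeneity this value is linear in $k$. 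For $A_n$, $C_n$, $B_2$ every row sum $\sum_j\langle\alpha_i,\alpha_j\rangle$ is nonnegative, so $\varepsilon(\mu)\le\sum_i x_i\le k$ immediately. For the remaining types the paper applies the simplex method (with an induction on the rank) to the LP and reads off the constants $n/2,\ (n-1)/2,\ 3,\ 5,\ 15/2,\ 5/2,\ 3/2$.

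So your step (i) --- computing $\Pi_h(V(\omega_i)\otimes V(\omega_j))$ via PRV or explicit tables for the exceptional types --- and the delicate inductive bootstrap in step (ii) are both unnecessary. The global linear program replaces them, and the worry that the defect ``accumulates too fast'' through iterated tensoring never arises, because the bound is obtained in one shot rather than step by step. Your approach is not wrong in spirit (the row/column sums of the Cartan matrix are indeed the mechanism), but the recursion you set up would at best reprove the LP bound with extra work, and at worst lose sharpness in the constants at each inductive step.
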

\begin{proof}

Let us first prove $B_k\subset \Pi_h(W^{\otimes k})$ by induction. 
It is straightforward to check $B_1\subset \Pi_h(W)$. 
We take $\lambda=\sum_{1\leq i\leq n}  x_{i}\omega_i\in \Pi_h(W^{\otimes k})$ such that $\sum_{1\leq i\leq n} x_i=k$. 
Consider the tensor product $V(\lambda)\otimes V(\omega_j)$. 
It has a simple summand with the highest weight $\omega_j+\sum_{1\leq i\leq n}  x_{i}\omega_i$, which is in $B_{k+1}$. 
This shows that $\Pi_h(W^{\otimes k+1})$ contains all the weights of the form $\sum_{1\leq i\leq n}  x_{i}\omega_i$ with $\sum_{1\leq i\leq n} x_i=k+1$. 

Meanwhile, $W^{\otimes k} $ is a proper subspace of $ W^{\otimes k+1}$ as $W$ contains the trivial representation, which is to say $B_k\subset \Pi_h(W^{\otimes k})\subset\Pi_h(W^{\otimes k+1})$. 
Hence $B_{k+1}\subset \Pi_h(W^{\otimes k+1})$.

Now we describe an upper bound of $\Pi_h(W^{\otimes k})$. 
Observe $\Pi_h(W^{\otimes k})$ consists the elements of the form 
\begin{center}
 $\mu=\sum_{1\leq i\leq n}x_i\omega_i-\sum_{1\leq i\leq n}y_i\alpha_i$, with $\sum x_i\leq k$ and $x_i,y_i\geq 0$,  
\end{center}
which subjects to the  conditions $\langle \mu,\alpha_j\rangle\geq 0$ for all $1\leq j\leq n$. 
This is equivalent to the linear inequalities 
\begin{center}
 $\vec{y}\cdot A\leq \vec{x}$,   
\end{center}
where $\vec{x}=(x_1,\cdots,x_n)$, $\vec{y}=(y_1,\cdots,y_n)$ and $A=[\langle \alpha_i,\alpha_j\rangle]_{n\times n}$ is the Cartan matrix of $\mathfrak{g}$. 
Hence we have
\begin{enumerate}
    \item $\sum_{1\leq j\leq n}y_j\langle \alpha_j,\alpha_i\rangle\leq x_i$ for $1\leq i\leq n$ (by $\mu\in D$);
    \item $\varepsilon(\sum_{1\leq i\leq n}y_i\alpha_i)\leq \sum_{i} x_i\leq k$ (by $\mu\in D$),
    \item $y_i \geq 0$ for $1\leq i\leq n$ (by $\sum_{1\leq i\leq n}x_i \omega_i$ are highest). 
\end{enumerate}
Please note $\sum_{1\leq j\leq n}\langle\alpha_i,\alpha_j\rangle \geq 0$ for each $i$ in type $A_n$, $B_2$ or $C_n$,. 
Hence each $-y_{i}\alpha_i$ contributes $-y_{i}\sum_{1\leq j\leq j}\langle\alpha_i,\alpha_j\rangle $ to $\varepsilon(\mu)$, which is non-positive. 
We have $\varepsilon(\mu)\leq \sum_{1\leq i\leq n}x_i$ and  $\Pi_h(W^{\otimes k})=B_k$. 

For the remaining types, we apply the simplex method \cite{MurLP83} and induction on the rank $n$ to get the maximal values of $\varepsilon(\mu)$. 
We leave it to the reader to check the linear inequalities. 
\end{proof}

Assume $\theta=\sum_{1\leq i\leq n}c_i\cdot \alpha_i$ as a $\mathbb{Z}$-linear combination of simple roots.  

\begin{lemma}\label{lbk}

If $\min_{1\leq i\leq n}\{\frac{c_i\|\alpha_i\|^2}{2}\}=c$, we have $D_l\subset B_k$ if and only if $k\geq \floor{\frac{l}{c}}$.

\end{lemma}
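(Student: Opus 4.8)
The plan is to turn both $D_l\subset B_k$ and its failure into elementary inequalities on the coordinates of a dominant weight. The one computation needed is the value of the pairing $(\omega_i,\theta)$: writing $\theta=\sum_{1\le j\le n}c_j\alpha_j$ and using $\langle\omega_i,\alpha_j\rangle=\delta_{ij}$ together with $(\omega_i,\alpha_j)=\tfrac{1}{2}(\alpha_j,\alpha_j)\langle\omega_i,\alpha_j\rangle$, one gets $(\omega_i,\theta)=\tfrac{1}{2}c_i\|\alpha_i\|^2=:a_i$, which is a positive integer (positive since $\theta$ is dominant and all $c_j>0$; integral since it equals the comark of the $i$-th node under the normalization $\|\theta\|^2=2$), and $c=\min_i a_i$ by hypothesis. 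Consequently, for a dominant weight $\lambda=\sum_{1\le i\le n}x_i\omega_i$ (so each $x_i\in\mathbb{Z}_{\ge 0}$) we have $(\lambda,\theta)=\sum_i a_i x_i$, so that $\lambda\in D_l$ reads $\sum_i a_i x_i\le l$ while $\lambda\in B_k$ reads $\varepsilon(\lambda)=\sum_i x_i\le k$.

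For the ``if'' direction I would assume $k\ge\lfloor l/c\rfloor$ and take $\lambda=\sum_i x_i\omega_i\in D_l$; since $a_i\ge c>0$ for all $i$, one has $c\sum_i x_i\le\sum_i a_i x_i\le l$, hence $\sum_i x_i\le l/c$, and since $\sum_i x_i$ is an integer this forces $\sum_i x_i\le\lfloor l/c\rfloor\le k$, i.e.\ $\lambda\in B_k$. For the ``only if'' direction I would exhibit a single weight violating the containment whenever $k<\lfloor l/c\rfloor$: choose $i_0$ with $a_{i_0}=c$ and put $\lambda=\lfloor l/c\rfloor\,\omega_{i_0}$, a dominant integral weight with $(\lambda,\theta)=\lfloor l/c\rfloor\cdot c\le l$, hence $\lambda\in D_l$; but $\varepsilon(\lambda)=\lfloor l/c\rfloor>k$, so $\lambda\notin B_k$. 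Thus $D_l\subset B_k$ already forces $k\ge\lfloor l/c\rfloor$.

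This is essentially a bookkeeping lemma, so I do not anticipate a genuine obstacle; the only substantive input is the elementary identity $(\omega_i,\theta)=\tfrac12 c_i\|\alpha_i\|^2$, and the one place to be careful is using integrality of the $a_i$ (hence $c\ge1$) to pass from $\sum_i x_i\le l/c$ to $\sum_i x_i\le\lfloor l/c\rfloor$. It is also worth recording the boundary case $l=0$: then $D_0=\{0\}=B_0$ and $\lfloor 0/c\rfloor=0$, so the equivalence holds trivially.
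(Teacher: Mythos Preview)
Your proof is correct and follows essentially the same approach as the paper: compute $(\lambda,\theta)=\sum_i \tfrac{c_i\|\alpha_i\|^2}{2}\,x_i$ in fundamental-weight coordinates, use $a_i\ge c$ to get $D_l\subset B_{\lfloor l/c\rfloor}$, and exhibit the witness $\lfloor l/c\rfloor\cdot\omega_{i_0}$ for the converse. If anything your write-up is a bit cleaner, since you treat general $c$ uniformly rather than first reducing to $c=1$; one small quibble is that the passage from $\sum_i x_i\le l/c$ to $\sum_i x_i\le\lfloor l/c\rfloor$ only uses that $\sum_i x_i\in\mathbb{Z}$, not integrality of the $a_i$, so that side remark is not actually where care is needed.
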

\begin{proof}
Without loss of generality, we assume $c=1$. 
Let $\lambda=\sum_{1\leq i\leq n}  n_{i}\omega_i$ and observe 
\begin{equation*}
\begin{aligned}
(\lambda,\theta)&=\sum_{1\leq i\leq n}c_i(\lambda,\alpha_i)=\sum_{1\leq i\leq n}c_i\cdot \frac{2(\lambda,\alpha_i)}{(\alpha_i,\alpha_i)}\frac{(\alpha_i,\alpha_i)}{2}\\
&=\sum_{1\leq i\leq n}c_i\cdot \frac{(\alpha_i,\alpha_i)}{2}\langle \lambda,\alpha_i\rangle=\sum_{1\leq i\leq n}\frac{c_i\|\alpha_i\|^2}{2}\cdot x_i.
\end{aligned}
\end{equation*}
Hence $D_l=\{\lambda=\sum_{1\leq i\leq n}  x_{i}\omega_i|\sum_{1\leq i\leq n}\frac{c_i\|\alpha_i\|^2}{2}\cdot x_i\leq l\}$. 

The inclusion $D_l\subset B_l$ is straightforward by Lemma \ref{lbk} as all $\frac{c_i\|\alpha_i\|^2}{2}\geq 1$.  
As $B_k\subset B_{k+1}$, it suffices to show $D_l\not\subset B_{l-1}$. 
Suppose $\frac{c_j\|\alpha_j\|^2}{2}=1$ for some $j$. 
Then $l\cdot\omega_j\in D_l$ but $l\cdot\omega_j\notin B_{l-1}$. 

For $c\geq 1$, $D_l=B_{\floor{\frac{l}{c}}}$ is clear. 
Suppose $\frac{c_j\|\alpha_j\|^2}{2}=c$ for some $j$.  
We have $\floor{\frac{l}{c}}\cdot \omega_j\in D_l$ but $\floor{\frac{l}{c}}\cdot \omega_j\notin B_{\floor{\frac{l}{c}}-1}$.
\end{proof}

We now consider the simple types of $\mathfrak{g}$ for the construction above. 
We refer to \cite{BbLie2} Chapter VI.4 for notations and more details. 
\begin{proposition}\label{ptypelie}
Assume the highest root $\theta=\sum_{1\leq i\leq n}c_i\cdot \alpha_i$. 
Then we have
\[
    \min_{1\leq i\leq n}\{\frac{c_i\|\alpha_i\|^2}{2}\}= 
\begin{cases}
    1,& \text{if } \mathfrak{g} \text{ is of type } A_n,B_n,C_n,D_n,E_6,E_7,F_4\text{ or } G_2\\
    2,              & \text{if } \mathfrak{g} \text{ is of type } E_8.
\end{cases}
\]
\end{proposition}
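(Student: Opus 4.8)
The plan is to reduce the computation of $\min_i \tfrac{c_i\|\alpha_i\|^2}{2}$ to an explicit, finite case check, type by type, using the tables in \cite{BbLie2} Chapter VI.4. First I would recall two inputs: the coefficients $c_i$ in the expansion $\theta=\sum_i c_i\alpha_i$ of the highest root, and the squared lengths $\|\alpha_i\|^2$ of the simple roots, both normalized by $\|\theta\|^2=2$. For a simply-laced type ($A_n$, $D_n$, $E_6$, $E_7$, $E_8$) every root has the same length, so $\|\alpha_i\|^2=2$ for all $i$, and the quantity becomes simply $\min_i c_i$. For the non-simply-laced types ($B_n$, $C_n$, $F_4$, $G_2$) there are two root lengths, with $\|\alpha\|^2\in\{2,1\}$ for $B_n,C_n,F_4$ (long roots normalized to $2$) and $\|\alpha\|^2\in\{2,\tfrac23\}$ for $G_2$; here one must track which simple roots are long and which are short and pair that data against the $c_i$.

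The key steps, carried out case by case, are as follows. For $A_n$: $\theta=\alpha_1+\cdots+\alpha_n$, so all $c_i=1$ and the minimum is $1$. For $D_n$: $\theta=\alpha_1+2\alpha_2+\cdots+2\alpha_{n-2}+\alpha_{n-1}+\alpha_n$, so $\min_i c_i=1$ (attained at the three ``outer'' nodes). For $E_6$: the $c_i$ are $(1,2,2,3,2,1)$ in Bourbaki labelling, minimum $1$; for $E_7$: $(2,2,3,4,3,2,1)$, minimum $1$; for $E_8$: $(2,3,4,6,5,4,3,2)$, minimum $2$ — this is the one exceptional value. For $B_n$: $\theta=\alpha_1+2\alpha_2+\cdots+2\alpha_n$ with $\alpha_1,\dots,\alpha_{n-1}$ long ($\|\cdot\|^2=2$) and $\alpha_n$ short ($\|\cdot\|^2=1$); then $\tfrac{c_1\|\alpha_1\|^2}{2}=1$ and $\tfrac{c_n\|\alpha_n\|^2}{2}=2\cdot\tfrac12=1$, so the minimum is $1$. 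For $C_n$: $\theta=2\alpha_1+\cdots+2\alpha_{n-1}+\alpha_n$ with $\alpha_1,\dots,\alpha_{n-1}$ short and $\alpha_n$ long; then $\tfrac{c_i\|\alpha_i\|^2}{2}=2\cdot\tfrac12=1$ for $i<n$ and $\tfrac{c_n\|\alpha_n\|^2}{2}=1\cdot 1=1$. For $F_4$: $\theta=2\alpha_1+3\alpha_2+4\alpha_3+2\alpha_4$ with $\alpha_1,\alpha_2$ long and $\alpha_3,\alpha_4$ short; the values $\tfrac{c_i\|\alpha_i\|^2}{2}$ are $2,3,2,1$, minimum $1$. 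For $G_2$: $\theta=3\alpha_1+2\alpha_2$ (with $\alpha_1$ short, $\|\alpha_1\|^2=\tfrac23$, $\alpha_2$ long, $\|\alpha_2\|^2=2$); then $\tfrac{c_1\|\alpha_1\|^2}{2}=3\cdot\tfrac13=1$ and $\tfrac{c_2\|\alpha_2\|^2}{2}=2$, minimum $1$. Collecting these gives the stated dichotomy: the minimum equals $2$ precisely for $E_8$ and equals $1$ otherwise.

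There is really no serious obstacle here; the statement is a bookkeeping fact about root systems, and the only thing to be careful about is consistency of normalization — one must use the convention $\|\theta\|^2=2$ throughout, and in the non-simply-laced cases correctly identify the short simple roots (length-squared $1$ for $B,C,F$ and $\tfrac23$ for $G_2$) versus the long ones. A clean way to present it without invoking length normalizations at all is to note that $\tfrac{c_i\|\alpha_i\|^2}{2}=\tfrac{c_i(\alpha_i,\alpha_i)}{(\theta,\theta)}$ is exactly the $i$-th coefficient of $\theta^\vee$ when expanded in the coroot basis $\{\alpha_i^\vee\}$ (the so-called dual Kac labels / comarks $a_i^\vee$), since $\theta=\sum_i c_i\alpha_i$ dualizes to $\theta^\vee=\sum_i c_i\tfrac{(\alpha_i,\alpha_i)}{(\theta,\theta)}\alpha_i^\vee$; thus the claim is simply that $\min_i a_i^\vee=1$ for all types except $E_8$, where the comarks are $(2,3,4,6,5,4,3,2)$ and the minimum is $2$. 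I would include this remark and then finish by citing the tables, leaving the elementary per-type verification to the reader.
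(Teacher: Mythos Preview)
Your proposal is correct and follows essentially the same approach as the paper: a type-by-type verification using the Bourbaki tables for the coefficients $c_i$ of the highest root and the normalized squared lengths $\|\alpha_i\|^2$. Your additional remark that $\tfrac{c_i\|\alpha_i\|^2}{2}$ equals the comark $a_i^\vee$ (the $i$-th coefficient of $\theta^\vee$ in the coroot basis) is a nice conceptual addition not present in the paper, and it lets one read off the result directly from the standard table of dual Kac labels.
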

\begin{proof}
\begin{enumerate}
    \item {\it Type $A_n$:}
    We have all $c_i=1$ as $\theta=\sum_{1\leq i\leq n}\alpha_i=\varepsilon_1-\varepsilon_{n+1}$.  
    The Euclidean inner product is normalized in the sense of $\|\theta\|^2=2$. 
    Hence $\frac{c_i\|\alpha_i\|^2}{2}=1$ for each $i$.

    \item {\it Type $B_n$:}
    We have all $c_1=1$ and $c_i=2$ for $2\leq i\leq n$ as $\theta=\alpha_1+2\alpha_2+\cdots+2\alpha_n=\varepsilon_1+\varepsilon_{2}$.  
    The Euclidean inner product is normalized in the sense of $\|\theta\|^2=2$. 
    Hence $\min_{1\leq i\leq n}\{\frac{c_i\|\alpha_i\|^2}{2}\}=\frac{c_1\|\alpha_1\|^2}{2}=1$. 

     \item {\it Type $C_n$:}
    We have all $c_n=1$ and $c_i=2$ for $1\leq i\leq n-1$ as $\theta=2\alpha_1+\cdots+2\alpha_{n-1}+\alpha_n=2\varepsilon_1$.  
    The normalized inner product is one-half of the Euclidean one. 
    So $\|\alpha_i\|^2=1$ for $1\leq i\leq n-1$ and $\|\alpha_n\|^2=2$. 
    Hence $\frac{c_i\|\alpha_i\|^2}{2}=1$ for each $i$.

    \item {\it Type $D_n$:}
    We have all $c_1=c_{n-1}=c_n=1$ and $c_i=2$ for $2\leq i\leq n-2$ as $\theta=\alpha_1+2\alpha_2+\cdots+2\alpha_{n-2}+\alpha_{n-1}+\alpha_n=\varepsilon_1+\varepsilon_2$.
    The Euclidean inner product is normalized in the sense of $\|\theta\|^2=2$ and we have $\|\alpha_i\|^2=2$ for each $i$. 
    Hence $\frac{c_i\|\alpha_i\|^2}{2}=1$ when $i=1,n-1,n$ are the minimal values . 
\end{enumerate}
Now let us consider the exceptional types. 
Note the $E_6,E_7$ type can be embedded into $E_8$ as subsystems. 
\begin{enumerate}
  \setcounter{enumi}{4}
  \item {\it Type $E_8$:}
$\theta=2\alpha_1+3\alpha_2+4\alpha_3+6\alpha_4+5\alpha_5+4\alpha_6+3\alpha_7+2\alpha_8=\varepsilon_1+\varepsilon_8$. 
  We have all $\|\alpha_i\|^2=2$. 
  Hence $\frac{c_i\|\alpha_i\|^2}{2}=2$ are minimal with value when $i=1,8$. 
  \item {\it Type $E_7$:}
$\theta=2\alpha_1+2\alpha_2+3\alpha_3+4\alpha_4+3\alpha_5+2\alpha_6+\alpha_7=\varepsilon_8-\varepsilon_7$.  
  We have  all $\|\alpha_i\|^2=2$. 
  Hence $\frac{c_i\|\alpha_i\|^2}{2}=1$ are minimal with value  when $i=7$.
  \item {\it Type $E_6$:}
$\theta=\alpha_1+2\alpha_2+2\alpha_3+3\alpha_4+2\alpha_5+\alpha_6=1/2(\varepsilon_1+\cdots+\varepsilon_5-\varepsilon_6-\varepsilon_7+\varepsilon_8)$.  
  We have all $\|\alpha_i\|^2=2$. 
  Hence $\frac{c_i\|\alpha_i\|^2}{2}=1$ are minimal with value  when $i=1,6$.

  \item {\it Type $F_4$:}
$\theta=2\alpha_1+3\alpha_2+4\alpha_3+2\alpha_4=\varepsilon_1+\varepsilon_2$.  
  We have  $\|\alpha_1\|^2=\|\alpha_2\|^2=2$ and $\|\alpha_3\|^2=\|\alpha_4\|^2=1$. 
  Hence $\frac{c_i\|\alpha_i\|^2}{2}=1$ is minimal with value when $i=4$. 

  \item {\it Type $G_2$:}
$\theta=3\alpha_1+2\alpha_2=-\varepsilon_1-\varepsilon_2+2\varepsilon_3$. 
The normalized inner product is $1/3$ of the Euclidean one. 
  We have  $\|\alpha_1\|^2=2/3$ and $\|\alpha_2\|^2=2$. 
  Hence $\frac{c_i\|\alpha_i\|^2}{2}=1$ is minimal with value  when $i=1$.  
\end{enumerate}
\end{proof}

\begin{proposition}\label{pdlup}
For any simple complex Lie algebra $\mathfrak{g}$, $d(l)\leq l$.  

In particular, if $\mathfrak{g}$ is of type $E_8$, $d(l)\leq \floor{l/2}$. 
\end{proposition}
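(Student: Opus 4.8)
The plan is to combine Lemma \ref{lbk} with Proposition \ref{ptypelie} and Proposition \ref{pBkwt}. The key observation is that $D_l$ sits inside the ``simplex'' $B_k$ for an explicitly computable $k$, and that $B_k$ in turn always sits inside $\Pi_h(W^{\otimes k})$. So the inclusion $D_l \subset \Pi_h(W^{\otimes k})$ follows once we know $D_l \subset B_k$, and the latter is precisely what Lemma \ref{lbk} controls in terms of the quantity $c = \min_{1\le i\le n}\{c_i\|\alpha_i\|^2/2\}$.

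First I would recall that, by Lemma \ref{lbk}, $D_l \subset B_k$ if and only if $k \geq \floor{l/c}$, where $c = \min_{1\le i\le n}\{c_i\|\alpha_i\|^2/2\}$ for the highest root $\theta = \sum_i c_i\alpha_i$. Next, by Proposition \ref{ptypelie}, we have $c = 1$ for every simple type except $E_8$, where $c = 2$. Finally, by Proposition \ref{pBkwt}, $B_m \subset \Pi_h(W^{\otimes m})$ for every $m$ and every simple type. Chaining these: for $\mathfrak{g}$ not of type $E_8$, taking $k = l = \floor{l/1}$ gives $D_l \subset B_l \subset \Pi_h(W^{\otimes l})$, so by the definition of $d(l)$ (Lemma \ref{liffdl}) we conclude $d(l) \leq l$. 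For $\mathfrak{g}$ of type $E_8$, taking $k = \floor{l/2}$ gives $D_l \subset B_{\floor{l/2}} \subset \Pi_h(W^{\otimes \floor{l/2}})$, hence $d(l) \leq \floor{l/2}$.

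The argument is essentially a bookkeeping assembly of results already established; there is no genuine obstacle. The only point requiring a little care is making sure the normalizations line up: $\|\theta\|^2 = 2$ is fixed throughout, and the case analysis in Proposition \ref{ptypelie} already accounts for the differing root lengths in the non-simply-laced types ($B_n$, $C_n$, $F_4$, $G_2$) and for the rescaling of the inner product relative to the standard Euclidean one. Since that proposition is quoted, all that remains is to state the two cases ($c=1$ versus $c=2$) and invoke Proposition \ref{pBkwt}.
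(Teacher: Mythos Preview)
Your proposal is correct and follows essentially the same approach as the paper: combine Proposition~\ref{ptypelie} (giving $c=1$ in all types except $E_8$, where $c=2$), Lemma~\ref{lbk} (so $D_l\subset B_{\lfloor l/c\rfloor}$), and the inclusion $B_k\subset\Pi_h(W^{\otimes k})$ from Proposition~\ref{pBkwt} to conclude $d(l)\le\lfloor l/c\rfloor$. The paper's proof is terser but the logical chain is identical.
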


\begin{proof}
By Proposition \ref{ptypelie} and Lemma \ref{lbk}, we know that $c=1$ for all the types except $E_8$ (for which $c=2$). 
It then follows Lemma \ref{pBkwt}. 
\end{proof}

Now we pass to the Verlinde ring $R_l(\mathfrak{g})$ or, equivalently, the category of $LG$-modules. 
\begin{corollary}\label{cdldl}
    $d(l)=\min\{k\geq 0|D_l= \Pi_h([W]^{\otimes k})\}$.
\end{corollary}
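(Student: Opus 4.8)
The plan is to compare the $\mathfrak{g}$-module computation encoded in $d(l)=\min\{k\mid D_l\subset\Pi_h(W^{\otimes k})\}$ with the corresponding statement for $LG$-modules at level $l$. The two things to establish are: first, that for every $k$ the set $\Pi_h([W]^{\otimes k})$ of highest weights occurring in the $k$-th tensor power of $[W]=[V(0)]\oplus\bigoplus_i[V(\omega_i)]$ in $R_l(\mathfrak{g})$ is exactly $\Pi_h(W^{\otimes k})\cap D_l$; and second, that consequently $\Pi_h([W]^{\otimes k})=D_l$ if and only if $D_l\subset\Pi_h(W^{\otimes k})$, which is the asserted equality of minimal $k$.

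First I would prove the inclusion $\Pi_h([W]^{\otimes k})\subset\Pi_h(W^{\otimes k})\cap D_l$. The containment in $D_l$ is automatic since all highest weights of level-$l$ $LG$-modules lie in $D_l$ by definition. The containment in $\Pi_h(W^{\otimes k})$ follows because the quotient map $\pi_l\colon R(\mathfrak{g})\to R_l(\mathfrak{g})$ sends $W^{\otimes k}$ to $[W]^{\otimes k}$, and by Proposition~\ref{plfus}(2) every basis element $[V(\mu)]$ with $\mu\in D_l$ appearing in $\pi_l(W^{\otimes k})$ is the image of some $V(\mu)$ already present in $W^{\otimes k}$ — more carefully, writing $W^{\otimes k}=\bigoplus_{\mu\in D}m_\mu V(\mu)$, the image in $R_l$ is $\sum_\mu m_\mu\pi_l(V(\mu))$, and each $\pi_l(V(\mu))$ is a $\mathbb{Z}$-combination of $[V(\nu)]$ with $\nu\in D_l$; but the $\nu$'s that can arise this way are all $\leq\mu$ in the dominance order and in particular lie in the set of weights of $V(\mu)\subset W^{\otimes k}$, hence in $\Pi(W^{\otimes k})$, though I need them in $\Pi_h(W^{\otimes k})$.

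The cleaner route, which I expect to be the main obstacle and which I would actually carry out, is to prove the reverse inclusion $\Pi_h(W^{\otimes k})\cap D_l\subset\Pi_h([W]^{\otimes k})$ directly and then argue the first inclusion is forced. For the reverse inclusion: take $\lambda\in\Pi_h(W^{\otimes k})$ with $\lambda\in D_l$. Then $\lambda=\sum_i x_i\omega_i$ is a highest weight of $W^{\otimes k}$, so it arises as a summand of some $V(\mu_1)\otimes\cdots\otimes V(\mu_k)$ with each $\mu_j\in\{0,\omega_1,\dots,\omega_n\}$. Using $\lambda\in D_l$ together with Proposition~\ref{pdlwts} (applied to the $\mu_j$'s whose sum dominates $\lambda$), and Proposition~\ref{plfus}(3) which says $\otimes_l$ agrees with $\otimes$ whenever the sum of highest weights stays in $D_l$, I can track that $\lambda$ survives to $[W]^{\otimes k}$: building up the tensor product one factor at a time, as long as partial sums of the fundamental weights involved remain in $D_l$ the reduced and unreduced tensor products have the same decomposition, so $[V(\lambda)]$ appears in $[W]^{\otimes k}$. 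The subtlety is handling partial products that leave $D_l$; here I would use that $\Pi_h$ only records highest weights and that the "correction terms" in the ideal $I_l$ (the $V(w^{-1}*\mu)-\epsilon(w)V(\mu)$ relations) can only move a weight to another element of $D_l$ or kill it, never create a new dominant weight outside what $W^{\otimes k}$ already produces — this is exactly Proposition~\ref{pdlwts} again, applied to the full collection $\mu_1,\dots,\mu_k$ since $\lambda\in D_l$ forces $\sum\mu_j\in D_l$ (as $\lambda\leq\sum\mu_j$ and dominance plus $(\,\cdot\,,\theta)\leq l$ propagates upward using $(\alpha_i,\theta)\geq 0$).

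Finally, combining the two inclusions gives $\Pi_h([W]^{\otimes k})=\Pi_h(W^{\otimes k})\cap D_l$ for all $k\geq 0$. Hence $\Pi_h([W]^{\otimes k})=D_l$ precisely when $D_l\subset\Pi_h(W^{\otimes k})$, and by Lemma~\ref{liffdl} the least such $k$ is $d(l)$. This yields $d(l)=\min\{k\geq 0\mid D_l=\Pi_h([W]^{\otimes k})\}$, as claimed. I expect the bookkeeping around partial tensor products leaving and re-entering $D_l$ to be the only delicate point; everything else is a direct application of Propositions~\ref{plfus} and \ref{pdlwts}.
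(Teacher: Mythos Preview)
Your argument contains a genuine error at the crucial step. You assert that ``$\lambda\in D_l$ forces $\sum\mu_j\in D_l$ (as $\lambda\leq\sum\mu_j$ and dominance plus $(\,\cdot\,,\theta)\leq l$ propagates upward using $(\alpha_i,\theta)\geq 0$).'' But the inequality runs the \emph{wrong way}. Writing $\sum_j\mu_j-\lambda=\sum_i y_i\alpha_i$ with $y_i\geq 0$, the nonnegativity of $(\alpha_i,\theta)$ gives
\[
\Bigl(\sum_j\mu_j,\theta\Bigr)=(\lambda,\theta)+\sum_i y_i(\alpha_i,\theta)\;\geq\;(\lambda,\theta),
\]
so the condition $(\,\cdot\,,\theta)\leq l$ propagates \emph{downward} in the dominance order, not upward. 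Concretely, in type $B_3$ with $l=3$ one has $(\omega_2,\theta)=2$, so $\mu_1=\mu_2=\omega_2$ gives $\sum\mu_j=2\omega_2\notin D_3$, yet $V(\omega_2)\otimes V(\omega_2)$ contains the trivial module $V(0)\in D_3$. With this implication gone you cannot invoke Proposition~\ref{pdlwts} or Proposition~\ref{plfus}(3) for your chosen tuple $(\mu_1,\dots,\mu_k)$, and the inclusion $\Pi_h(W^{\otimes k})\cap D_l\subset\Pi_h([W]^{\otimes k})$ is left unproven. Your first inclusion is likewise only sketched: you correctly note that the $\nu$'s produced by $\pi_l(V(\mu))$ lie in $\Pi(W^{\otimes k})$, but you never close the gap between $\Pi$ and $\Pi_h$.

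The paper sidesteps this entirely. Instead of fixing $\lambda$ and trying to control a particular tuple, it imposes a condition on $k$ itself, namely $B_k\subset D_l$. Under that hypothesis \emph{every} tuple $(\mu_1,\dots,\mu_k)$ of fundamental (or trivial) weights has $\sum_j\mu_j\in B_k\subset D_l$, so Proposition~\ref{pdlwts} gives $\Pi_h(W^{\otimes k})\subset D_l$ outright; then Proposition~\ref{plfus}(2) applied to each summand shows $\pi_l$ preserves all multiplicities, whence $\Hom_{\mathfrak g}(V_\mu,W^{\otimes k})=\Hom_{LG}([V_\mu],[W]^{\otimes k})$ for every $\mu\in D_l$. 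The moral is that the hypothesis you need is on the \emph{sum} $\sum_j\mu_j$ (hence on $k$, via $B_k$), not on the target weight $\lambda$.
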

\begin{proof}
By Proposition \ref{pdlwts}, we know $\Pi_h(W^k)\subset D_l$ when $B_k\subset D_l$. 
Then, by 2 of Proposition \ref{plfus}, we conclude
\begin{center}
$\Hom_{\mathfrak{g}}(V_\mu,W^{\otimes k})=\Hom_{LG}([V_\mu],[W]^{\otimes k})$, 
\end{center}
for all $\mu\in D_l$. 
This shows $W^{\otimes k}$ contains $V(\mu)$ if and only if $[W]^{\otimes k}$ contains $[V(\mu)]$ for each $\mu\in D_l$. 
\end{proof}

\begin{proof}[Proof of Theorem \ref{tdl}]
The upper bound is given in Proposition \ref{pdlup}. 

For type $A_n$, $C_n$ or $B_2$, it follows the fact $B_k=\Pi_h(W^{\otimes k})$ (see 1 of Proposition \ref{pBkwt}), Lemma \ref{lbk} and Proposition \ref{ptypelie}. 

For the remaining types except $E_8$, let us assume $d(l)\geq s$. 
Let $\beta$ be the value given in Proposition \ref{pBkwt}, i.e, $\beta=\frac{n}{2}$ for $B_n$,$\frac{n-1}{2}$ for $C_n$, $3,5$ for $E_6,E_7$, $\frac{5}{2}$ for $F_4$ or $\frac{3}{2}$ for $G_2$. 
We have
\begin{equation*}
\begin{aligned}
d(l)> k & \Leftrightarrow \text{There exists } \exists \lambda\in D_l, \lambda\notin \Pi_h(W^{\otimes k})&\\
&\Leftarrow  \exists \lambda\in D_l, \lambda\notin B_{\floor{\beta k}} &\\
&\Leftarrow \floor{\beta k}\leq l-1 &\\
&\Leftrightarrow \beta k<l\Leftrightarrow k<\beta^{-1}l.& 
\end{aligned}
\end{equation*}
Here we apply Proposition \ref{pBkwt} and Lemma \ref{lbk} in the second and third lines respectively. 
Then we conclude $d(l)\geq \beta^{-1}l$ or, equivalently $d(l)\geq \ceil{\beta^{-1}l}$. 

The inequality of $d(l)$ for $E_8$ follows similarly by Corollary \ref{lbk} and Proposition \ref{pBkwt}. 
\end{proof}

\section{Towers of Finite-Dimensional Algebras and Subfactors}\label{stower}

From this section, we fix the positive integer $l$and suppose $|D_l|=m$. 
Let $V_1,\dots,V_m$ denote the simple $LG$-modules in the Verlinde ring $R_l(\mathfrak{G})$.  

For each $k\geq 0$, define a finite-dimensional $C^*$-algebra
\begin{center}
$A_k=\End(W^{\otimes k})=\Hom(W^{\otimes k},W^{\otimes k})$, 
\end{center}
where we let $A_0=\mathbb{C}$.  
Observe $\Hom(V_i,V_j)=\mathbb{C}\delta_{i,j}$ and $\dim Z(A_k)$ is the number of isomorphism classes of simple modules contained in $W^{\otimes k}$. 
By Proposition \ref{ptypelie}, we know $\dim Z(A_k)=m$ when $k\geq l$ for all the types except $E_8$, or, $k\geq \floor{l/2}$ 
for type $E_8$. 

\underline{The left inclusion $i_k\colon A_{k}\to A_{k+1}$ }
There is a natural inclusion $i_k:A_k\hookrightarrow A_{k+1}$  defined as $i_k(f)=f\otimes \text{id}_W$. 
We denote the inclusion matrix of the pair $A_k\subset A_{k+1}$ by $T(k)=[t(k)_{i,j}]\in M_{m\times m}(\mathbb{Z})$, which is given by
\begin{center}
    $ t(k)_{i,j}=\dim_{\mathbb{C}}\Hom(V_i\otimes W,V_j)$. 
\end{center}
\begin{lemma}\label{lsymir}
For $k\geq d(l)$, the inclusion matrices are identical, i.e. $T_k=T$ for $k\geq d(l)$.  
Moreover, $T$ is symmetric and irreducible.  
\end{lemma}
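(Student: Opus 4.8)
The plan is to prove the three assertions — stabilization $T_k=T$ for $k\geq d(l)$, symmetry of $T$, and irreducibility of $T$ — in that order, since the later ones lean on the former.

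\emph{Stabilization.} For $k\geq d(l)$ we have, by Lemma~\ref{liffdl} and the definition of $d(l)$, that $D_l\subset\Pi_h(W^{\otimes k})$; combined with Proposition~\ref{pdlwts} (which gives $\Pi_h(W^{\otimes k})\subset D_l$ once $B_k\subset D_l$, hence here) we get $\Pi_h(W^{\otimes k})=D_l$ for every $k\geq d(l)$. Thus for $k\geq d(l)$ the simple summands of $W^{\otimes k}$ are exactly $V_1,\dots,V_m$, so the index set of the inclusion matrix $T(k)$ is the full $\{1,\dots,m\}$ on both sides, and the entries $t(k)_{i,j}=\dim_{\mathbb C}\Hom(V_i\otimes W,V_j)$ do not depend on $k$ at all (they only reference $V_i,V_j,W$, not $k$). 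Hence $T(k)=T$ for all $k\geq d(l)$, where $T=[\dim\Hom(V_i\otimes W,V_j)]_{i,j}$. One should be a little careful here that the matrix genuinely records the inclusion $A_k\subset A_{k+1}$: the multiplicity of $V_j$ in $V_i\otimes W$ equals the multiplicity with which the simple block of $A_k$ labeled $V_i$ maps into the simple block of $A_{k+1}$ labeled $V_j$ under $f\mapsto f\otimes\id_W$, which is standard semisimple-algebra bookkeeping.

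\emph{Symmetry.} I would show $\dim\Hom(V_i\otimes W,V_j)=\dim\Hom(V_j\otimes W,V_i)$. The key inputs are that $W=V(0)\oplus\bigoplus_i V(\omega_i)$ is self-dual as a $\mathfrak g$-module (equivalently as an $LG$-module at level $l$): $V(0)$ is self-dual, and the set $\{\omega_1,\dots,\omega_n\}$ of fundamental weights is stable under the contragredient involution $\lambda\mapsto -w_0\lambda$, so $\overline{W}\cong W$. Then by Frobenius reciprocity / duality in the fusion category, $\dim\Hom(V_i\otimes W,V_j)=\dim\Hom(V_i,V_j\otimes\overline W)=\dim\Hom(V_i,V_j\otimes W)=\dim\Hom(V_j\otimes W,V_i)$ — using once more that the relevant tensor products stay inside $D_l$ so that Proposition~\ref{plfus}(3) lets us compute the Verlinde fusion via ordinary $\mathfrak g$-decomposition, where these reciprocity identities are classical. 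Hence $T$ is symmetric.

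\emph{Irreducibility.} This is the step I expect to be the main obstacle, since it is where the combinatorial content of Section~\ref{stenfund} really enters. I want to show the graph on vertex set $\{V_1,\dots,V_m\}$ with an edge $i\sim j$ whenever $t_{i,j}>0$ is connected. The natural argument: $V(0)$ (the vertex $V_1$, say) is connected to each $V(\omega_i)$ because $V(0)\otimes W$ contains $V(\omega_i)$, so $t_{1,\,\omega_i}>0$; conversely every simple $V(\lambda)$ with $\lambda\in D_l$ appears in $W^{\otimes d(l)}$, i.e. in some iterated tensor $V(\omega_{j_1})\otimes\cdots\otimes V(\omega_{j_{d(l)}})$, and running this product one factor at a time produces a path in the graph from $V(0)$ (through intermediate summands, each at level $\leq l$ by Proposition~\ref{pdlwts}) to $V(\lambda)$ — at each stage, if $\mu$ is a summand of the partial product then some $\mu'$ with $t_{\mu,\mu'}>0$ is a summand of the next partial product, precisely because that partial product is nonzero and decomposes into summands all reachable from $\mu$ via one $\otimes W$. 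So every vertex is connected to $V_1$, giving connectedness, hence irreducibility of $T$. (The subtlety to handle carefully is that "reachable via $\otimes W$" should be read with $W$ including the trivial summand, so a summand of a partial product is also trivially a summand after tensoring with the $V(0)$-component; the genuinely new weights come from the $V(\omega_j)$-components, and one must check the path can be chosen to stay within $D_l$, which is exactly Proposition~\ref{pdlwts}.) Once connectedness and symmetry are in hand, irreducibility of the nonnegative integer matrix $T$ is immediate.
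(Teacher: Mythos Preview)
Your stabilization and symmetry arguments match the paper's: both establish that $W$ is self-dual (via the $-w_0$ permutation of the fundamental weights) and then apply Frobenius reciprocity to get $T=T^{\rm t}$. One quibble: the parenthetical ``$\Pi_h(W^{\otimes k})\subset D_l$ once $B_k\subset D_l$, hence here'' is off --- the inclusion $B_k\subset D_l$ fails for large $k$ (already at $k=d(l)+1$ in type $A_n$, where $\Pi_h(W^{\otimes k})=B_k$ and $d(l)=l$), so this cannot be why the simple summands of $[W]^{\otimes k}$ are exactly $V_1,\dots,V_m$ for \emph{all} $k\geq d(l)$. The right justification is simply that we are working in the Verlinde ring, where every simple summand has highest weight in $D_l$ by Proposition~\ref{plfus}(1); Corollary~\ref{cdldl} is the clean packaging. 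This is a citation slip, not a real gap.

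For irreducibility you take a different route from the paper. You argue connectedness of the fusion graph by building, for each $\lambda\in D_l$, a path from $V(0)$ to $V(\lambda)$ inductively through the summands of $[W]^{\otimes s}$ for $s=0,1,\dots,d(l)$; together with symmetry this gives irreducibility. The paper instead proves $(T^{a+b})_{i,j}>0$ for an arbitrary pair $(i,j)$ in one stroke: choosing $a,b$ with $V_i\subset W^{\otimes a}$ and $V_j\subset W^{\otimes b}$, it uses self-duality of $W$ to rewrite $\dim\Hom(V_i\otimes W^{\otimes a+b},V_j)=\dim\Hom(V_i\otimes W^{\otimes b},V_j\otimes W^{\otimes a})$ and bounds this below by $\dim\Hom(V_i\otimes V_j,V_j\otimes V_i)\geq 1$. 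Both arguments are correct; the paper's is a compact algebraic trick that reuses the self-duality already established, while yours is more elementary and makes the path structure in the Bratteli diagram explicit.
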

\begin{proof} 
We first claim $W$ is self-dual.  
It is well-known that the dual of a simple $\mathfrak{g}$-module $V(\lambda)$ is given by $V(-w_{0}(\lambda))$, where $w_0$ is the longest element in the Weyl group $W$. 

Note $w_0$ sends the positive Weyl chamber to the negative one. 
Observe $-w_0^{-1}(\alpha_i)$ is still a simple root and $-w_0^{-1}$ acts as a permutation of $\Delta$, say $-w_0^{-1}(\alpha_i)=\alpha_{\sigma(i)}$ for some $\sigma \in S_n$.  
We have
\begin{equation*}
\begin{aligned}
\langle -w_0^{-1}(\lambda_i),\alpha_j\rangle&=\frac{2(-w_0^{-1}(\lambda_i),\alpha_j)}{(\alpha_j,\alpha_j)}=\frac{2(-\lambda_i,w_0^{-1}(\alpha_j))}{(w_0^{-1}(\alpha_j),w_0^{-1}(\alpha_j))}\\
&=\frac{\lambda_i,\alpha_{\sigma(j)})}{(\alpha_{\sigma(j)},\alpha_{\sigma(j)})}=\delta_{i,{\sigma(j)}}. 
\end{aligned}
\end{equation*}
Hence $-w_0^{-1}(\lambda_i)=\lambda_{\sigma(i)}$ and $W$ is self-dual. 

Thus we obtain
\begin{equation*}
\begin{aligned}
t(k)_{i,j}&=\dim\Hom(V_{i}\otimes W,V_{j})=\dim\Hom(V_{i},V_{j})\otimes W^{*})\\&=\dim\Hom(V_{i},V_{j}\otimes W)=\dim\Hom(V_{j}\otimes W,V_{i})=t(k)_{j,i}, 
\end{aligned}
\end{equation*}
which is independent with $k$ by Proposition \ref{pdlup} once $k\geq d(l)$. 
Hence $T(k)=T(k)^{t}=T$ for some  $T$ if $k\geq d(l)$. 

For the irreducibility of $T$, it suffices to show the associated graph is strongly connected.
This is equivalent to $\sum_{s=1}^{S}T^s$ is positive for sufficiently large $S$. 
Suppose $T^s=[t_{i,j}^{(s)}]$ and fix a pair of indices $(i,j)$. 
There exist positive integers $a,b$ such that $W^{\otimes a},W^{\otimes b}$ have the summands $V_{i}$, $V_{j}$ respectively. 
Let $s=a+b$ and we obtain
\begin{equation*}
\begin{aligned}
t^{(s)}_{i,j}&=\dim\Hom(V_{i}\otimes W^{\otimes a+b},V_{j})=\dim\Hom(V_{i}\otimes W^{\otimes b},V_{j}\otimes  W^{\otimes a})\\&\geq \dim\Hom(V_{i}\otimes V_{j},V_{j}\otimes V_{i})\geq 1. 
\end{aligned}
\end{equation*}
Hence the associated graph is strongly connected. 
\end{proof}

\begin{proposition}\label{ptr}
The algebra $\cup_{k\geq 0}A_k$ admits a unique tracial state. 
\end{proposition}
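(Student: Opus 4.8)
The plan is to realize $A_\infty := \bigcup_{k\ge 0} A_k$ as an increasing union of finite‑dimensional $C^*$-algebras and to parametrize its tracial states by coherent families of weight vectors on the associated Bratteli diagram, thereby reducing the statement to Perron--Frobenius theory in the stable range $k\ge d(l)$. Write $m_k:=\dim Z(A_k)$, so that $A_k\cong\bigoplus_{i=1}^{m_k} M_{n^{(k)}_i}(\mathbb{C})$. A tracial state on $A_k$ is recorded by a weight vector $\vec{s}_k\in\mathbb{R}_{\ge 0}^{m_k}$, the $i$-th entry being the trace of a minimal projection of the $i$-th block; the inclusion $i_k$ sends a trace with weight vector $\vec{s}_{k+1}$ to the one with weight vector $T(k)\vec{s}_{k+1}$, while the block‑size vectors satisfy $\vec{d}_{k+1}^{t}=\vec{d}_k^{t}\,T(k)$. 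Hence a tracial state on $A_\infty$ is exactly a sequence $(\vec{s}_k)_{k\ge 0}$ with $\vec{s}_k=T(k)\vec{s}_{k+1}$ for all $k$ and $\vec{s}_0=1$; the value $\langle\vec{d}_k,\vec{s}_k\rangle=\tau(1)$ is automatically independent of $k$, so only $\vec{s}_0=1$ needs to be imposed.

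Next I would invoke the structure already established. By Corollary~\ref{cdldl}, for $k\ge d(l)$ one has $m_k=m:=|D_l|$, and by Lemma~\ref{lsymir} the matrix $T(k)$ stabilizes to a fixed symmetric irreducible matrix $T$. Moreover $t(k)_{i,i}=\dim\Hom(V_i\otimes W,V_i)\ge 1$, since $W$ contains the tensor unit, so $T$ has strictly positive diagonal and is therefore \emph{primitive}. By Perron--Frobenius, its spectral radius $\beta>0$ is a simple eigenvalue with strictly positive unit eigenvector $\vec{v}$, and $\beta^{-N}T^{N}\to P:=\vec{v}\,\vec{v}^{t}$ as $N\to\infty$. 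Existence of a tracial state is now immediate: put $\vec{s}_k:=c\,\beta^{-k}\vec{v}$ for $k\ge d(l)$ and $\vec{s}_k:=T(k)T(k+1)\cdots T(d(l)-1)\,\vec{s}_{d(l)}$ for $0\le k<d(l)$, with $c>0$ chosen so that $\vec{s}_0=1$; this sequence is coherent, has non‑negative entries, and so defines a tracial state.

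For uniqueness, let $(\vec{s}_k)$ be any coherent normalized sequence and put $C:=\vec{v}^{t}\vec{s}_{d(l)}$. From $\vec{s}_{d(l)}=T^{j}\vec{s}_{d(l)+j}$ and $\vec{v}^{t}T=\beta\vec{v}^{t}$ we get $\vec{v}^{t}\vec{s}_{d(l)+j}=\beta^{-j}C$; if $C=0$ then $\vec{s}_{d(l)}=0$ (as $\vec{v}>0$ and $\vec{s}_{d(l)}\ge 0$), which propagates down to $\vec{s}_0=0$, a contradiction, so $C>0$. The rescaled vectors $\vec{w}_j:=\beta^{j}\vec{s}_{d(l)+j}/C$ then satisfy $\vec{v}^{t}\vec{w}_j=1$, hence lie in the compact slice $\{\vec{x}\ge 0:\vec{v}^{t}\vec{x}=1\}$ and are uniformly bounded, and $\vec{w}_0=(\beta^{-1}T)^{j}\vec{w}_j$. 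Letting $j\to\infty$ and using $(\beta^{-1}T)^{j}\to P$ with $P\vec{w}_j=\vec{v}$ gives $\vec{w}_0=\vec{v}$, i.e.\ $\vec{s}_{d(l)}=C\vec{v}$; applying the same reasoning from index $d(l)+j$ forces $\vec{s}_{d(l)+j}=C\beta^{-j}\vec{v}$. Finally $C$ is pinned down by $\langle\vec{d}_{d(l)},\vec{s}_{d(l)}\rangle=1$, and the $\vec{s}_k$ for $k<d(l)$ are determined by $\vec{s}_k=T(k)\vec{s}_{k+1}$. Hence the coherent sequence, and therefore the tracial state, is unique.

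The main obstacle is the uniqueness half: excluding pathological coherent sequences of non‑negative weight vectors running through the stable part of the Bratteli diagram. The decisive point is that $W$ contains the trivial representation, so that the irreducibility of $T$ from Lemma~\ref{lsymir} is in fact \emph{primitivity}; this forbids an eigenvalue of modulus $\beta$ other than $\beta$ itself. Otherwise (a bipartite diagram, with $-\beta$ an eigenvalue) one could build an oscillating non‑negative coherent solution alongside the Perron one, and one would only have convergence of Cesàro averages rather than of $\beta^{-N}T^{N}$ itself, so uniqueness would genuinely fail. The non‑stable range $0\le k<d(l)$ is mere bookkeeping, since there each connecting map recovers $\vec{s}_k$ from $\vec{s}_{k+1}$ with no remaining freedom.
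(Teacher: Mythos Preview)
Your argument is correct and follows the same Perron--Frobenius strategy as the paper, which simply cites Takesaki (Chapter~XIX, Lemma~3.9) after asserting that the stabilized inclusion matrix $T$ has a strictly dominant positive eigenvalue. What you add is a self-contained unpacking of that citation: you parametrize tracial states by coherent weight sequences on the Bratteli diagram and run the limit $(\beta^{-1}T)^j\to \vec{v}\,\vec{v}^{t}$ explicitly. More importantly, you isolate the one point the paper glosses over, namely that \emph{primitivity} (not mere irreducibility) of $T$ is what forces the spectral gap and hence uniqueness; your observation that the trivial summand $V_0\subset W$ gives $t_{i,i}\ge 1$ and hence a positive diagonal is exactly the missing justification. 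Your final remark that a genuinely bipartite $T$ would support oscillating non-negative coherent solutions (so uniqueness would fail) is accurate and explains why this step cannot be skipped.
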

\begin{proof}
By the Perron-Frobenius theorem, the inclusion matrix $T=T_{k}$ ($k\geq d+1$) admits an eigenvalue $\beta\in \mathbb{R}_{+}$ such that $|\beta|$ is strictly greater than the others. 
Its eigenvector $V_\beta$ has all its components positive. 
As $T$ is irreducible by Lemma \ref{lsymir}, 
one can show the space of tracial states is a singleton and hence contains a factor trace (see \cite{Tak3} Chapter XIX, Lemma 3.9). 
\end{proof}


This trace will yield the hyperfinite $\text{II}_1$ factor as its completion in the GNS construction. 
We denote this hyperfinite $\text{II}_1$ factor by $M$ and the trace by $\tr$. 

\underline{The conditional expectation $E_{k+1}$} For each $A_k$, we consider its completion with respect to $tr$, which is Hilbert space and will be denoted as $L^2(A_k,\tr)$. 
Let $e_{k+1}:L^2(A_k,\tr)\to L^2(A_{k-1},\tr)$ be the orthogonal projection, which is comes from the embedding $i_{k-1}$. 
The projection $e_{k+1}$ will certainly induce a map $E_{k+1}:A_k\to A_{k-1}$ called the {\it conditional expectation}. 
Consider the action of $A_k$ and $e_{k+1}$ on $L^2(A_k)$.
They generate a von Neumann $(A_k\cup\{e_{k+1}\})''$, denoted $\langle A_{k},e_{k+1}\rangle$. 
This is the {\it basic construction} of finite-dimensional $C^*$-algebras.

\begin{lemma}\label{lbasicon}
We have $\langle A_{k},e_{k+1}\rangle\subset A_{k+1}$. 
If $k\geq d+1$, $\langle A_{k},e_{k+1}\rangle=A_{k+1}$. 

\end{lemma}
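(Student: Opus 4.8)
The plan is to prove the two assertions by combining the standard comparison of inclusion matrices in the basic construction with the description of $A_{k+1}=\End(W^{\otimes (k+1)})$ as an endomorphism algebra. First I would recall that for a finite-dimensional inclusion $A_{k-1}\subset A_k$ with inclusion matrix $T(k-1)$, the basic construction $\langle A_k,e_{k+1}\rangle$ is again finite-dimensional, with $Z(\langle A_k,e_{k+1}\rangle)\cong Z(A_{k-1})$ and with inclusion matrix of $A_k\subset \langle A_k,e_{k+1}\rangle$ equal to the transpose $T(k-1)^{t}$. So the simple summands of $\langle A_k,e_{k+1}\rangle$ are indexed by the simple summands of $W^{\otimes(k-1)}$. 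For the inclusion $\langle A_k,e_{k+1}\rangle\subset A_{k+1}$ I would use that $e_{k+1}\in A_{k+1}$: concretely, $e_{k+1}$ is (a scalar multiple of) the orthogonal projection of $W^{\otimes(k+1)}=W^{\otimes(k-1)}\otimes W\otimes W$ onto the image of the $\mathfrak g$-equivariant map built from the coevaluation $\mathbb C\to W\otimes W$ of the self-dual module $W$ (Lemma \ref{lsymir} gives $W\cong W^{*}$), tensored with $\mathrm{id}_{W^{\otimes(k-1)}}$. Since $A_k=\End(W^{\otimes k})$ embeds in $A_{k+1}$ via $f\mapsto f\otimes\mathrm{id}_W$ and $e_{k+1}\in A_{k+1}$, the von Neumann algebra they generate lies inside $A_{k+1}$; this gives the first inclusion with no hypothesis on $k$.

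For the equality when $k\geq d+1$, the strategy is a dimension count on centers together with the fact that an inclusion of a finite-dimensional $C^{*}$-algebra into a finite-dimensional $C^{*}$-algebra that is "onto each matrix block" must be an equality. By Lemma \ref{lsymir}, for $k\geq d(l)$ the inclusion matrices stabilize, $T(k-1)=T(k)=T$, and $T$ is symmetric, so $T^{t}=T$. Hence $A_k\subset\langle A_k,e_{k+1}\rangle$ has inclusion matrix $T$, exactly the same as the inclusion matrix of $A_k\subset A_{k+1}$. In particular $\dim Z(\langle A_k,e_{k+1}\rangle)=\dim Z(A_{k+1})=m$ (using that for $k\geq d+1\geq d(l)$ all $m$ simple modules of $R_l(G)$ already appear, by the discussion after Proposition \ref{ptr} and Corollary \ref{cdldl}), and the two inclusions $A_k\hookrightarrow\langle A_k,e_{k+1}\rangle\hookrightarrow A_{k+1}$ have the property that the composite inclusion matrix equals $T$ while the first one already equals $T$; a direct comparison of the semisimple decompositions forces the second inclusion $\langle A_k,e_{k+1}\rangle\hookrightarrow A_{k+1}$ to be an isomorphism onto each block, i.e.\ $\langle A_k,e_{k+1}\rangle=A_{k+1}$.

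Alternatively — and this is probably the cleaner route to write up — I would invoke the general principle (Jones, \cite{J83}; see also \cite{Tak3} Chapter XIX) that if $A_{k-1}\subset A_k$ is a connected inclusion of finite-dimensional $C^{*}$-algebras whose inclusion matrix is $\Lambda$, then $\langle A_k,e_{k+1}\rangle$ has inclusion matrix $\Lambda^{t}$ over $A_k$ and Bratteli diagram "reflecting" that of $A_{k-1}\subset A_k$; so $\langle A_k,e_{k+1}\rangle=A_{k+1}$ is equivalent to the Bratteli diagram of $A_{k-1}\subset A_k\subset A_{k+1}$ being symmetric, which is exactly the content of $T(k-1)=T(k)^{t}$ — and this holds for $k\geq d+1$ by Lemma \ref{lsymir} (symmetry) plus stabilization of the inclusion matrices. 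The main obstacle, and the point that needs care rather than invocation, is verifying that $e_{k+1}$ genuinely lies in $A_{k+1}=\End_{\mathfrak g}(W^{\otimes(k+1)})$ and that the pair $(A_k\subset A_{k+1}, e_{k+1})$ really is the abstract basic construction of $A_{k-1}\subset A_k$ — i.e.\ that $e_{k+1}$ implements the conditional expectation $E_k\colon A_k\to A_{k-1}$ with respect to $\tr$ and that $\langle A_k,e_{k+1}\rangle$ has the right size; once that identification is in place, everything else is the standard inclusion-matrix bookkeeping above. For $k<d+1$ one only gets the proper inclusion, since the Bratteli diagram is not yet symmetric and new simple summands of $W^{\otimes(k+1)}$ can appear that are not hit by $\langle A_k,e_{k+1}\rangle$.
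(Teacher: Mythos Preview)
Your argument for the equality when $k\geq d+1$ matches the paper's: both reduce to the observation that the inclusion matrices stabilize to the symmetric matrix $T$ by Lemma~\ref{lsymir}, so $T(k-1)^{t}=T=T(k)$, and then invoke \cite{J83} Lemma~4.4.1. For the first inclusion $\langle A_k,e_{k+1}\rangle\subset A_{k+1}$, however, you take a different route. You propose to realize $e_{k+1}$ explicitly inside $A_{k+1}=\End(W^{\otimes(k+1)})$ as the projection built from the coevaluation $\mathbb C\to W\otimes W$ on the last two tensor factors, using the self-duality $W\cong W^{*}$ from Lemma~\ref{lsymir}; once $e_{k+1}\in A_{k+1}$, the inclusion is immediate. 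The paper never identifies $e_{k+1}$ concretely: instead it uses that $V_0\subset W$ forces the set of simple summands of $W^{\otimes k}$ to be non-decreasing in $k$, so the inclusion matrix $T_k$ of $A_k\subset A_{k+1}$ entrywise dominates $T_{k-1}^{t}$, the inclusion matrix of $A_k\subset\langle A_k,e_{k+1}\rangle$, and this suffices to embed the basic construction. Your approach is more explicit and exposes the categorical mechanism underlying the tower, but---as you correctly flag---it requires checking that the coevaluation projection really implements the trace-preserving conditional expectation for the Perron--Frobenius trace of Proposition~\ref{ptr}; the paper's argument sidesteps that verification by working purely at the level of Bratteli diagrams. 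Note also the different structural inputs: the paper's inclusion argument rests on the summand $V_0\subset W$, whereas yours rests on $W\cong W^{*}$.
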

\begin{proof} 
Note the inclusion matrix $T^{\langle A_{k},e_{k+1}\rangle}_{A_k}=(T^{A_k}_{A_{k-1}})^{\rm t}=T_{k-1}^{\rm t}$ for $A_k\subset \langle A_{k},e_{k+1}\rangle$. 
It suffices to show $T_k-T_{k-1}^{\rm t}$ is positive in general and $T^{\langle A_{k},e_{k+1}\rangle}_{A_k}=T_{k-1}^{\rm t}=T_{k-1}$ if $k\geq d+1$. 
     
Note that $W$ contains the trivial representation $V_0$ as a summand. 
Hence the number of any irreducible object $V_{i}$ at depth $k$ is no greater than that at depth $k+1$.  
So $t(k+1)_{i,j}\geq t(k)_{j,i}$ or equivalently $T_{k+1}-T_{k}^{\rm t}$ is positive,  which implies $A_{k+1}$ always contains the algebra $\langle A_{k},e_k\rangle$. 

Observe $T_{k+1}=T_k={T_k}^{\rm t}$ is symmetric when $k\geq d+1$. 
By \cite{J83} Lemma 4.4.1, $A_{k+1}$ is the basic construction of the pair $A_{k-1}\subset^{e_k} A_k$.
\end{proof}

\underline{The right inclusion $i_{k,j+k}\colon A_k\to A_{j+k}$ }
There is another natural inclusion $i_{k,j+k}\colon A_{k}\subset A_{j+k}$ defined by
\begin{center}
$i_{k,j+k}(f)=\text{id}_{W^{\otimes j}}\otimes f$ 
\end{center}
for $j\geq 0$, which is in $\End(W^{\otimes (j+k)})=A_{j+k}$ for $f\in \End(W^{\otimes k})=A_{k}$. 
Thus it induces an inclusion $i_{j}^R:\cup_{k\geq 0}{A_k}\subset \cup_{k\geq 0}A_{j+k}$ (here $R$ denotes the inclusion on the right side). 
Indeed, this inclusion is a composite of $i_{k,k+1},i_{k+1,k+2},\dots,i_{k+j-1,k+j}$ and can be shown to be trace-preserving. 

Now let us consider the inclusion $i_j^R$
which maps the triple $A_{k-1}\subset A_k\subset \langle A_k,e_{k+1}\rangle$
to $A_{j+k-1}\subset A_{j+k}\subset A_{j+k+1}$. 
\begin{corollary}\label{ciee}
Within $B(L^2(A_{j+k}))$, we have $i_j^R(e_{k+1})=e_{j+k+1}$. 
\end{corollary}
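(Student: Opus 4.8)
The plan is to pin down each Jones projection $e_{m+1}$ as a concrete morphism of $\mathfrak{g}$-modules that is ``supported on the last two tensor legs'' of $W^{\otimes(m+1)}$; once this local form is available, the asserted identity is nothing more than a relabelling of tensor legs, uniformly in $j$.

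First I would use Lemma~\ref{lsymir} to fix a coevaluation $b\colon\mathbb{C}\to W\otimes W$ and an evaluation $d\colon W\otimes W\to\mathbb{C}$ exhibiting the self-duality of $W$, normalised so that $d\circ b=\beta\cdot\mathrm{id}_{\mathbb{C}}$, where $\beta$ is the Perron--Frobenius eigenvalue of the inclusion matrix $T$ from Proposition~\ref{ptr} (equivalently, $\beta$ is the categorical dimension of $W$, since $(\dim V_i)_i$ is a right $\beta$-eigenvector of $T$). Next I would identify the abstract tower data with its categorical counterpart: the unique tracial state $\tr$ restricts on each $A_m=\End(W^{\otimes m})$ to the normalised categorical trace $\beta^{-m}\,\mathrm{Tr}^{\mathrm{cat}}$ — both are the trace weighted by the positive $T$-eigenvector — and therefore the conditional expectation $E_{m+1}\colon A_m\to A_{m-1}$ is the normalised partial trace over the last leg,
\[
E_{m+1}(x)=\tfrac1\beta\,(\mathrm{id}_{W^{\otimes(m-1)}}\otimes d)\,(x\otimes\mathrm{id}_W)\,(\mathrm{id}_{W^{\otimes(m-1)}}\otimes b).
\]
The crucial step is then to check, using Jones' basic construction (Lemma~\ref{lbasicon}, cf.\ \cite{J83} Lemma~4.4.1), that inside $A_{m+1}=\End(W^{\otimes(m+1)})$ the Jones projection is exactly
\[
e_{m+1}=\tfrac1\beta\,\mathrm{id}_{W^{\otimes(m-1)}}\otimes(b\circ d).
\]
Here $e_{m+1}^2=e_{m+1}=e_{m+1}^\ast$ follows from $d\circ b=\beta\,\mathrm{id}_{\mathbb{C}}$ and the unitary structure; $e_{m+1}$ commutes with $A_{m-1}$ because they act on disjoint legs; and a short diagrammatic computation gives $e_{m+1}(x\otimes\mathrm{id}_W)e_{m+1}=(E_{m+1}(x)\otimes\mathrm{id}_{W^{\otimes2}})e_{m+1}$ for $x\in A_m$ together with $E_{m+2}(e_{m+1})=\beta^{-2}$, which characterises the Jones projection inside the basic construction. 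With this formula in hand the corollary is immediate: the right inclusion $i_j^R$ tensors on the left by $\mathrm{id}_{W^{\otimes j}}$, so
\[
i_j^R(e_{k+1})=\mathrm{id}_{W^{\otimes j}}\otimes e_{k+1}=\tfrac1\beta\,\mathrm{id}_{W^{\otimes(j+k-1)}}\otimes(b\circ d)=e_{j+k+1},
\]
the last equality being the displayed formula with $m=j+k$; and since $i_j^R$ is a (trace-preserving, hence faithful) homomorphism, this identity of elements of $A_{j+k+1}$ is also an identity of operators on $L^2(A_{j+k})$.

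I expect the main obstacle to be the ``crucial step'' above, namely matching the abstractly-defined Jones projection — the GNS orthogonal projection $L^2(A_m)\to L^2(A_{m-1})$ — with the concrete cap--cup morphism on the last two legs, keeping all normalising powers of $\beta$ consistent; this is exactly where one must invoke that $\tr$ is the Markov trace of modulus $\beta$ and that $\beta$ is the Perron--Frobenius eigenvalue. If one prefers to avoid the explicit formula, the same conclusion can be reached by the uniqueness of the Jones projection in the basic construction of $A_{j+k-1}\subset A_{j+k}$: one checks that $\mathrm{id}_{W^{\otimes j}}\otimes e_{k+1}$ is a projection satisfying the defining relations of $e_{j+k+1}$ (commutation with $A_{j+k-1}$, the pull-through relation $p\,x\,p=E_{j+k+1}(x)\,p$, and the Markov normalisation). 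A still lighter route is to first settle the case $j=1$, i.e.\ $\mathrm{id}_W\otimes e_{k+1}=e_{k+2}$, and then iterate using $i_j^R=i_1^R\circ\cdots\circ i_1^R$; but in every version the only real content is the locality of the Jones projection on the last two tensor legs of $W^{\otimes(m+1)}$.
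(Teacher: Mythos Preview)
Your proposal is correct, and the main route you take is genuinely different from the paper's. The paper argues abstractly by uniqueness: it observes that $i_j^R(e_{k+1})$, viewed in $B(L^2(A_{j+k}))$, restricts on $L^2(i_j^R(A_k))$ to the orthogonal projection onto $L^2(i_j^R(A_{k-1}))$, and that it commutes with $A_{j+k-1}$; it then invokes the uniqueness of the Jones projection for the pair $A_{j+k-1}\subset A_{j+k}$ to conclude. By contrast, your primary argument is constructive: you write down the Jones projection explicitly as the normalised cap--cup $\tfrac{1}{\beta}\,\mathrm{id}_{W^{\otimes(m-1)}}\otimes(b\circ d)$, after first identifying the Markov trace with the categorical trace and the conditional expectation with the partial trace on the last leg; the corollary is then a trivial relabelling of tensor factors. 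What your approach buys is an explicit formula for $e_{m+1}$ inside $A_{m+1}=\End(W^{\otimes(m+1)})$, which makes locality on the last two legs manifest and is reusable elsewhere; the paper's argument is shorter but leaves that formula implicit. Your alternative ``lighter route'' via the defining relations (commutation with $A_{j+k-1}$, pull-through, Markov normalisation) is essentially the paper's uniqueness argument, phrased more carefully; in fact your version spells out more of the verification than the paper does.
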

\begin{proof}
Consider the restriction on the subspace $L^2(i_j^R(A_k))\subset L^2(A_{j+k})$, we  have $i_j^R(e_{k+1})$ is the orthogonal projection from $L^2(i_j^R(A_k))$ to 
 $L^2(i_j^R(A_{k-1}))$. 

Moreover, for $x\in  i_j^R(A_{k-1})$, we have $[i_j^R(e_{k+1}),x]=0$. 
It is clear that $i_j^R(e_{k+1})$ commutes with the elements in $A_{j+k-1}$.
This implies $i_j^R(e_{k+1})$ acts as the same as $e_{j+k}$ on $L^2(A_{j+k})$, which is the unique projection. 
\end{proof}

\section{The Commutants}\label{sfactor}

Consider the complex algebra $\cup_{k\geq 0}A_{j+k}$. 
By Proposition \ref{ptr}, $\cup_{k\geq 0}A_{j+k}$ also admits a factor trace. 
The GNS construction gives us a hyperfinite $\text{II}_1$ factor, denoted $M_j$.
Moreover, as $A_{k}\subset A_{j+k}$ for each $k\geq 0$,
$M$ is a subfactor of $M_j$. 
Thus we get an increasing tower of factors
\begin{center}
$M=M_0\subset M_1\subset M_2\subset M_3\subset \dots$. 
\end{center}
The commutants $M'\cap M_j$ will be discussed with commuting squares. 
We refer to \cite{JS97} for some basic facts about commuting squares and their properties. 
Now we consider the following diagram
\begin{equation*}
\begin{aligned}
&A_{j+k}~~&\subset^{i_{j+k}}~~~~&A_{j+k+1}\\
&\cup^{i_{k,j+k}}~~   &&\cup^{i_{k+1,j+k+1}}~~\\
&A_{k}~~&\subset^{i_k}~~~~&A_{k+1}
\end{aligned}
\end{equation*}
with $j\geq 0$. 
Please note the horizontal embeddings are the left inclusions while the vertical ones are the right inclusions. 

\begin{lemma}
We have $E_{j+k+2}(i_{k+1,j+k+1}(A_{k+1}))=A_{k}$. 
Hence the diagram above is a commuting square. 
\end{lemma}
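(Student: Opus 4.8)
The plan is to prove the single intertwining identity
\[
E_{j+k+2}\circ i_{k+1,j+k+1}\;=\;i_{k,j+k}\circ E_{k+2}\qquad\text{as maps }A_{k+1}\longrightarrow A_{j+k}.
\]
Granting this, the asserted equality is immediate: $E_{k+2}\colon A_{k+1}\to A_{k}$ restricts to the identity on $A_{k}$, hence is surjective, so $E_{j+k+2}\bigl(i_{k+1,j+k+1}(A_{k+1})\bigr)=i_{k,j+k}(A_{k})$, which is the copy of $A_{k}$ appearing in the square. Moreover this identity is precisely the commuting-square condition for the square (with respect to $\tr$): the horizontal arrows are the left inclusions, the vertical arrows are the right inclusions, and $E_{k+2}$, $E_{j+k+2}$ are the trace-preserving conditional expectations attached to the left inclusions $A_{k}\subset A_{k+1}$ and $A_{j+k}\subset A_{j+k+1}$ (all conditional expectations here being trace-preserving by uniqueness of the trace, Proposition \ref{ptr}).

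To establish the intertwining identity I would pass to the Jones projections. For $x\in A_{k+1}$ the basic construction gives $e_{k+2}\,x\,e_{k+2}=E_{k+2}(x)\,e_{k+2}$ in $A_{k+2}\supseteq\langle A_{k+1},e_{k+2}\rangle$, and likewise $e_{j+k+2}\,y\,e_{j+k+2}=E_{j+k+2}(y)\,e_{j+k+2}$ for $y\in A_{j+k+1}$ (see \cite{JS97}). Apply the algebra homomorphism $i_j^R\colon\bigcup_{m}A_m\to\bigcup_{m}A_{j+m}$ to the first relation, using $i_j^R|_{A_{k+1}}=i_{k+1,j+k+1}$, $i_j^R|_{A_k}=i_{k,j+k}$ and — the load-bearing input — Corollary \ref{ciee} in the shifted form $i_j^R(e_{k+2})=e_{j+k+2}$. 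This yields
\[
e_{j+k+2}\,i_{k+1,j+k+1}(x)\,e_{j+k+2}\;=\;i_{k,j+k}\bigl(E_{k+2}(x)\bigr)\,e_{j+k+2}.
\]
Comparing with the second relation applied to $y=i_{k+1,j+k+1}(x)\in A_{j+k+1}$ gives
\[
E_{j+k+2}\bigl(i_{k+1,j+k+1}(x)\bigr)\,e_{j+k+2}\;=\;i_{k,j+k}\bigl(E_{k+2}(x)\bigr)\,e_{j+k+2},
\]
with both coefficients in $A_{j+k}$.

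Finally one cancels the projection: the map $a\mapsto a\,e_{j+k+2}$ is injective on $A_{j+k}$. This is the standard pull-down property of the basic construction; concretely, $e_{j+k+2}$ commutes with $A_{j+k}$ and $\tr(a a^{*}e_{j+k+2})$ is a positive scalar multiple of $\tr(a a^{*})$, so $a\,e_{j+k+2}=0$ forces $a=0$. Hence $E_{j+k+2}\bigl(i_{k+1,j+k+1}(x)\bigr)=i_{k,j+k}\bigl(E_{k+2}(x)\bigr)$ for every $x\in A_{k+1}$, which is the desired identity. There is no real obstacle beyond keeping the indices straight and invoking Corollary \ref{ciee} correctly; once that corollary is in hand the whole argument is short. (Alternatively, after identifying $\tr$ with the categorical Markov trace one could realize $E_{j+k+2}$ as a partial categorical trace over the last tensor leg, which visibly commutes with the right inclusion since the two operations act on disjoint tensor factors; the route via Corollary \ref{ciee} is cleaner as it never leaves the von Neumann algebraic setting.)
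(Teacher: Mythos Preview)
Your argument is correct and takes a genuinely different route from the paper. The paper establishes the set equality by treating the two inclusions separately: it declares the inclusion $E_{j+k+2}(i_{k+1,j+k+1}(A_{k+1}))\subset i_{k,j+k}(A_{k})$ to be ``straightforward'' without further comment, and for surjectivity it exploits the decomposition $W=V_0\oplus W_0$ to build, for each $g\in A_k$, an explicit preimage $\bar g\in A_{k+1}$ (namely $g$ placed in the $V_0$-block of $W^{\otimes k}\otimes W$ and zero on the $W_0$-block), then checks by hand that $E_{j+k+2}(\mathrm{id}_{W^{\otimes j}}\otimes\bar g)=i_{k,j+k}(g)$. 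You instead prove the single intertwining identity $E_{j+k+2}\circ i_{k+1,j+k+1}=i_{k,j+k}\circ E_{k+2}$ by pushing the Jones relation $e_{k+2}\,x\,e_{k+2}=E_{k+2}(x)\,e_{k+2}$ through the homomorphism $i_j^R$ via Corollary~\ref{ciee} and then cancelling the projection. This buys you both inclusions simultaneously, makes the commuting-square condition the statement rather than a consequence, and never touches the specific structure of $W$; the paper's construction is more elementary (no appeal to the Jones-projection calculus) but leaves the ``straightforward'' inclusion unjustified and relies on the presence of the trivial summand $V_0\subset W$, which your argument does not.
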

\begin{proof}
The inclusion $E_{j+k+2}(i_{k+1,j+k+1}(A_{k+1}))\subset i_{k,j+k}(A_{k})$ is straightfoward. 

Note as $W=V_0\oplus W_0$ with $W_0=\oplus_i V(\omega_i)$, we have $W^{\otimes(k-j)}=(W^{\otimes(k-j)}\otimes V_0)\oplus(W^{\otimes(k-j)}\otimes W_0)$. 
For $i_{k,j+k}(g)=\text{id}_{W^{\otimes j}}\otimes g \in i_{k,j+k}(A_{k})$ with $g\in A_{k}$, we define an element $\bar{g}\in A_{k+1}$ by
\begin{equation*}
\bar{g}=
\begin{bmatrix}
g&0\\
0&0
\end{bmatrix}
\in  \End{((W^{\otimes k}\otimes V_0)\oplus  (W^{\otimes k}\otimes W_0))}
\end{equation*}
with respect to the decomposition of $W^{\otimes k}$ above. 
Then we have $E_{j+k+2}(i_{k+1,j+k+1}(\bar{g}))=E_{j+k+2}(\text{id}_{W^{\otimes j}} \otimes\bar{g})=i_{k,j+k}(g)$. 
\end{proof}
\begin{lemma}
If $k\geq d$, the commuting square is symmetric. 
\end{lemma}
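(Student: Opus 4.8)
The plan is to read ``symmetric'' as the non-degeneracy condition of \cite{JS97}: a commuting square is symmetric when its top-right algebra is generated by the two intermediate subalgebras, equivalently when it equals the linear span of the products of elements of those two subalgebras. For the square at hand, I want to show
\begin{center}
$A_{j+k+1}=\mathrm{span}\bigl(A_{j+k}\cdot i_{k+1,j+k+1}(A_{k+1})\bigr)$.
\end{center}
The case $j=0$ is the trivial square and needs nothing, so I would take $j\geq 1$; then $k\geq d$ already forces $j+k\geq d+1$, which is precisely the regime in which the basic-construction statements of Section \ref{stower} apply.

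The argument I have in mind rests on just two ingredients. First, since $j+k\geq d+1$, Lemma \ref{lbasicon} realizes $A_{j+k+1}$ as the basic construction $\langle A_{j+k},e_{j+k+1}\rangle$ of the pair $A_{j+k-1}\subset A_{j+k}$, so $A_{j+k+1}$ is generated by $A_{j+k}$ together with the single Jones projection $e_{j+k+1}$. Second, the unconditional part of Lemma \ref{lbasicon} gives $e_{k+1}\in\langle A_k,e_{k+1}\rangle\subseteq A_{k+1}$, and Corollary \ref{ciee} gives $e_{j+k+1}=i_j^R(e_{k+1})$; since $i_j^R$ restricted to $A_{k+1}$ is precisely the right inclusion $i_{k+1,j+k+1}$, this puts $e_{j+k+1}$ inside $i_{k+1,j+k+1}(A_{k+1})$. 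Combining the two, $A_{j+k+1}=\langle A_{j+k},e_{j+k+1}\rangle$ is contained in the $*$-algebra generated by $A_{j+k}$ and $i_{k+1,j+k+1}(A_{k+1})$, and the reverse inclusion is automatic; hence they coincide, which is the asserted symmetry.

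I do not anticipate a real obstacle, only two points of care. The first is the boundary value $k=d$, which is covered since $j\geq 1$ still gives $j+k\geq d+1$, so no separate argument is needed there. The second is the routine passage from ``$A_{j+k+1}$ is generated, as an algebra, by the two intermediate subalgebras'' to the span-of-products formulation of symmetry: in a commuting square the span of those length-two products is already a $*$-subalgebra, so the two statements are equivalent, and I would cite \cite{JS97} for this rather than reprove it. Beyond Lemma \ref{lbasicon} and Corollary \ref{ciee}, the proof uses nothing.
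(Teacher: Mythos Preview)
Your proof is correct but takes a different route from the paper. The paper invokes \cite{JS97} Corollary 5.4.4, which characterizes symmetry of a commuting square by a relation among the four inclusion matrices, and then observes via Lemma \ref{lsymir} that for $k\geq d$ every inclusion matrix in the square is (a power of) the single symmetric matrix $T$, so the required relation $(T_{A_k}^{A_{k+1}})^{\rm t}T_{A_k}^{A_{j+k}}=T_{A_{k+1}}^{A_{j+k+1}}(T_{A_{j+k}}^{A_{j+k+1}})^{\rm t}$ is automatic.

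You instead use the equivalent span/generation criterion and locate the Jones projection $e_{j+k+1}$ inside the right-included copy $i_{k+1,j+k+1}(A_{k+1})$ via Corollary \ref{ciee}, then invoke $A_{j+k+1}=\langle A_{j+k},e_{j+k+1}\rangle$ from Lemma \ref{lbasicon}. This bypasses inclusion matrices entirely and gives a more structural explanation tied directly to the basic construction; the cost is that you need Corollary \ref{ciee}, which the paper's argument does not touch. The paper's version, by contrast, is essentially a one-line matrix check once Lemma \ref{lsymir} is in hand. Both are clean; yours is arguably the more conceptual of the two.
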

\begin{proof}
By \cite{JS97} Corollary 5.4.4, it suffices to show the inclusion matrices have the following relation:
\begin{center}
$(T_{A_{k}}^{A_{k+1}})^{\rm t} T_{A_{k}}^{A_{j+k}}=T_{A_{k+1}}^{A_{k+j+1}}(T_{A_{j+k}}^{A_{j+k+1}})^{\rm t} $. 
\end{center}
By Lemma \ref{lsymir}, if $k\geq d$, we have $\dim\mathcal{Z}({A_k})=m$.  
So all these inclusion matrices are  $T=T_k$ that we obtained in the proof of Lemma \ref{lsymir}, which is a symmetric one in $\text{Mat}_{m}(\mathbb{Z})$. 
\end{proof}
Now we consider the following towers of $C^*$-algebras:
\begin{equation*}
\begin{aligned}
&A_{j+d}&\subset &~~~~A_{j+d+1}&\subset &~~~~A_{j+d+2}&\subset&\cdots\\
&\cup&   &~~~~~\cup &&~~~~~\cup&&\\
&A_{d}&\subset &~~~~A_{d+1}&\subset &~~~~A_{d+2}&\subset&\cdots
\end{aligned}
\end{equation*}
We have $A_{j+k+1}=\langle A_{j+k},e_{j+k+1}\rangle$. 
As shown before, the unions of these two rows give a pair of $\text{II}_1$ factors $M=M_0\subset M_{j}$. 

\begin{proposition}\label{pcomtt}
With the definition of $A_k,M,M_j$ above, we have  
\begin{center}
$M'\cap M_j\cong A_j$     
\end{center}
for all $j\geq 0$. 
\end{proposition}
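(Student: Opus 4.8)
The plan is to identify the relative commutant $M'\cap M_j$ with $A_j=\End(W^{\otimes j})$ by exhibiting an explicit $*$-isomorphism and then checking it is onto via a dimension count. First I would construct the map. An element $f\in A_j=\End(W^{\otimes j})$ gives, for every $k\geq 0$, the operator $f\otimes \mathrm{id}_{W^{\otimes k}}\in \End(W^{\otimes(j+k)})=A_{j+k}$; these are compatible with the left inclusions $i_{j+k}\colon A_{j+k}\hookrightarrow A_{j+k+1}$, so they assemble to a single element of $\cup_{k}A_{j+k}\subset M_j$. Call this element $\Phi(f)$. Since each $f\otimes\mathrm{id}_{W^{\otimes k}}$ commutes with $i_{k,j+k}(A_k)=\mathrm{id}_{W^{\otimes j}}\otimes A_k$ (it acts on disjoint tensor legs), and $M=\cup_k i_{k,j+k}(A_k)''$, we get $\Phi(f)\in M'\cap M_j$. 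The map $\Phi$ is clearly a unital $*$-homomorphism, and it is injective because $M_j$ is a $\mathrm{II}_1$ factor (so its trace, restricted, is faithful on the finite-dimensional $*$-algebra $A_j$, which has a unique trace by Proposition \ref{ptr} matching the Markov trace).

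The heart of the argument is surjectivity. I would run this through the commuting-square machinery set up just above the statement. For $k\geq d$ the square
\begin{equation*}
\begin{aligned}
&A_{j+k}&\subset&~~~~A_{j+k+1}\\
&\cup&   &~~~~~\cup\\
&A_{k}&\subset&~~~~A_{k+1}
\end{aligned}
\end{equation*}
is a symmetric commuting square with all four inclusion matrices equal to the symmetric, irreducible $T$ of Lemma \ref{lsymir}, and the top rows iterate the basic construction ($A_{j+k+1}=\langle A_{j+k},e_{j+k+1}\rangle$ by Lemma \ref{lbasicon} and Corollary \ref{ciee}). Then the standard commuting-square computation of relative commutants (see \cite{JS97}, or Ocneanu compactness) gives $M'\cap M_j = (i_{d,j+d}(A_d))'\cap A_{j+d}$, the relative commutant being "frozen" at the level $k=d$ because the square is symmetric from there on. So it remains to compute $(\mathrm{id}_{W^{\otimes j}}\otimes A_d)'\cap A_{j+d}$ inside $\End(W^{\otimes(j+d)})$.

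Finally I would evaluate this finite-dimensional relative commutant. Decompose $W^{\otimes d}=\bigoplus_{\mu\in\Pi_h(W^{\otimes d})} V(\mu)\otimes \mathbb{C}^{m_\mu}$; by the definition of $d=d(l)$ and Corollary \ref{cdldl} the index set is exactly $D_l$ (all of it, and nothing outside it after passing to $LG$-modules), with $m_\mu\geq 1$ for each $\mu\in D_l$. Then $\mathrm{id}_{W^{\otimes j}}\otimes A_d$ acts on $W^{\otimes j}\otimes W^{\otimes d}=\bigoplus_\mu (W^{\otimes j}\otimes V(\mu))\otimes \mathbb{C}^{m_\mu}$ as $\bigoplus_\mu \mathrm{id}\otimes M_{m_\mu}(\mathbb{C})$, so its commutant in $\End(W^{\otimes(j+d)})$ is $\bigoplus_{\mu\in D_l}\End(W^{\otimes j}\otimes V(\mu))\otimes \mathrm{id}_{m_\mu}$, which is $*$-isomorphic to $\bigoplus_{\mu\in D_l}\End(W^{\otimes j}\otimes V(\mu))$. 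On the other side, $\End(W^{\otimes j})=\bigoplus_{\nu\in\Pi_h(W^{\otimes j})}M_{n_\nu}(\mathbb{C})$ where $n_\nu=\dim\Hom(V(\nu),W^{\otimes j})$, and one checks $\dim\End(W^{\otimes j}\otimes V(\mu)) = \sum_{\nu}\big(\dim\Hom(V(\nu),W^{\otimes j}\otimes V(\mu)^*)\big)^2$; summing over $\mu\in D_l$ and using self-duality of $W$ (hence of $W^{\otimes j}$, Lemma \ref{lsymir}) together with Proposition \ref{pdlwts} to see that only $\mu\in D_l$ contribute to $\Hom(V(\nu),W^{\otimes j}\otimes V(\mu)^*)$ when $\nu\in\Pi_h(W^{\otimes j})$, one matches this with $\sum_\nu n_\nu^2=\dim A_j$. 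Combined with the obvious inclusion $\Phi(A_j)\subseteq (\mathrm{id}_{W^{\otimes j}}\otimes A_d)'\cap A_{j+d}$, equality of dimensions forces $\Phi$ to be onto, proving $M'\cap M_j\cong A_j$.

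The step I expect to be the main obstacle is the surjectivity direction — specifically, justifying rigorously that the relative commutant stabilizes at level $d$ (the commuting-square / Ocneanu-compactness argument requires the square to be symmetric and the vertical inclusions to be connected, which is where Lemma \ref{lsymir} and the irreducibility of $T$ are essential) and then carrying out the bimodule-multiplicity bookkeeping in the last paragraph so that the dimension count comes out exactly equal rather than merely an inequality. The algebraic identity $\Phi(f)\in M'\cap M_j$ and injectivity are routine; the quantitative matching of dimensions, using that $d(l)$ is the \emph{minimal} depth at which $\Pi_h(W^{\otimes d})=D_l$, is the delicate part.
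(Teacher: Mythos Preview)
Your overall strategy --- reduce via Ocneanu compactness to a finite-dimensional relative commutant, then identify it --- is exactly what the paper does. But two things go wrong in the execution.

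First, the compactness formula is off by one step: it gives $M'\cap M_j=(i_{d+1,j+d+1}(A_{d+1}))'\cap A_{j+d}$, not $(i_{d,j+d}(A_d))'\cap A_{j+d}$. This shift is not cosmetic. Second, and fatally, your dimension count does not close, because the commutant you compute is strictly larger than $A_j$. Take $\mathfrak{g}=\mathfrak{sl}_2$, $l=3$ (so $d=3$) and $j=1$. Then $A_1=\End(V_0\oplus V_1)\cong\mathbb{C}^2$ has dimension $2$, whereas $(\mathrm{id}_W\otimes A_3)'\cap A_4\cong\bigoplus_{\mu,\nu}M_{n_{\mu,\nu}}(\mathbb{C})$ with $n_{\mu,\nu}=\dim\Hom(V_\nu,W\otimes V_\mu)\in\{0,1\}$ and exactly ten nonzero entries, so this commutant is $\mathbb{C}^{10}$. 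No self-duality bookkeeping will make $\sum_{\mu,\nu}n_{\mu,\nu}^2$ collapse to $\sum_\nu n_\nu^2$; the identity you assert in the last paragraph is simply false. The extra constraint that cuts the commutant down from $\mathbb{C}^{10}$ to $\mathbb{C}^2$ comes precisely from passing to $A_{d+1}$, which is not contained in $A_{j+d}$ and therefore imposes a genuinely new restriction on elements of $A_{j+d}$.

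The paper avoids any direct decomposition by a different device: it notes that $A_{d+1}$ contains the Jones projections $e_2,\dots,e_{d+1}$, which under the right inclusion map to $e_{j+2},\dots,e_{j+d+1}$ (Corollary~\ref{ciee}). Hence $(i_{d+1,j+d+1}(A_{d+1}))'\cap A_{j+d}\subset\{e_{j+2},\dots,e_{j+d+1}\}'\cap A_{j+d}$, and the latter is identified with $A_j$ by Jones' result \cite{J83}, Proposition~4.1.4, on commutants of the projection algebra in a basic-construction tower. The reverse inclusion is the easy direction (your map $\Phi$). So the missing ingredient is not finer multiplicity bookkeeping but the Jones-projection commutant identity; that is what actually pins the relative commutant down to $A_j$.
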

\begin{proof}
We have already checked that the first one of the commuting squares above is symmetric and the two rows are the towers obtained from basic constructions. 
By Lemma \ref{lbasicon}, for $k\geq 1$, we know  $i_{d+k+1,j+d+k+1}(A_{d+k+1})$ is equal to $\langle i_{d+k,j+d+k}(A_{d+k}),e_{j+d+k+1}\rangle$ by  $i(e_{d+k+1})=e_{j+d+k+1}$ in Corollary \ref{ciee}. 
By the Ocneanu Compactness theorem (see \cite{JS97} Theorem 5.7.1), we have
\begin{center}
$M'\cap M_j=(i_{d+1,j+d+1}(A_{d+1}))'\cap A_{j+d}$.    
\end{center}
It suffices to show the right-hand side is just $A_j$. 

As shown in Lemma \ref{lbasicon}, we have $A_{k+1}$ always contains all $e_{i}$ with $2\leq i\leq k+1$. 
Within the embedding $i_{k+1,j+k+1}:A_{k+1}\to A_{j+k+1}$, it can be shown that the projections $\{e_{i}\}_{2\leq i\leq k+1}$ are mapped to $\{e_{j+i}\}_{2\leq i\leq k+1}$ respectively. 
So $(i_{d+1,j+d+1}(A_{d+1}))'\cap A_{j+l} \subset \{e_{j+1},\dots,e_{j+d+1}\}'\cap A_{j+l}$. 
Then, by \cite{J83} Proposition 4.1.4, we get $\{e_{j+1},\dots,e_{j+d+1}\}'\cap A_{j+d}=A_j$. 

Moreover, the inclusion $A_j\subset (i_{d+1,j+d+1}(A_{d+1}))'\cap A_{j+d}$ is straightforward. 
Hence we have $M'\cap M_j=A_j$. 
\end{proof}

\underline{The right conditional expectation $E'_{j+k+2}\colon A_{j+k+1}\to A_{j+k}$} There is another conditional expectation $E'_{j+k+2}\colon A_{j+k+1}\to A_{j+k}$ while identifying $A_{j+k}$ as a subalgebra by the inclusion $i_{j+k,j+k+1}\colon f\mapsto \text{id}_W\otimes f$ for $f\in A_{j+k}$.  
(Please note the differences between these $E'_{k}$ and $E_{k}$'s, where $E_k$ comes from the left inclusion $i_k\colon f\mapsto f\otimes \text{id}_W$, see Section \ref{stower}). 
These $E'_{j+k+2}$'s induce a map $E'_{j+1}:\cup_{k\geq 0}A_{j+k+1}\to \cup_{k\geq 0}A_{j+k}$ and further yield a conditional expectation
\begin{center}
$E'_{j+2}:M_{j+1}\to M_j$. 
\end{center}
Let $\xi_j$ be the canonical cyclic trace vector in $L^2(M_j)$. 
By identifying $M_{j+1}$ with the algebra of left action operator on $L^2(M_{j+1})$, $E'_{j+2}$ extends to a projection $e'_{j+2}$ via $e_{j+2}(x\xi_j)=E'_{j+2}(x)\xi_j$.

\begin{corollary}
We have $M_{j+1}=\langle M_j,e'_{j+1}\rangle$ for $j\geq 1$.  
\end{corollary}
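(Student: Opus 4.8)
The plan is to recognize this as an instance of the basic‐construction characterization, now applied to the right‐hand tower rather than the left‐hand one. The statement to prove is that $M_{j+1}=\langle M_j,e'_{j+1}\rangle$ for $j\geq 1$, so the first task is to set up the right analogue of everything done in Section \ref{stower} for the left inclusions. Concretely, I would run the argument of Lemma \ref{lbasicon} and Corollary \ref{ciee} verbatim with the roles of the two inclusions swapped: the left inclusion $i_k\colon f\mapsto f\otimes\mathrm{id}_W$ is replaced by the right inclusion $i_{k,k+1}\colon f\mapsto\mathrm{id}_W\otimes f$, and the conditional expectation $E_{k+1}$ is replaced by $E'_{k+1}$. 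Since $W$ is self‑dual (Lemma \ref{lsymir}) and contains the trivial summand $V_0$, the combinatorics of the inclusion matrices for the right tower $A_{j+k}\subset A_{j+k+1}$ (viewed via the right embeddings) is identical to that of the left tower; in particular, for $k$ large enough the inclusion matrix stabilizes to the same symmetric irreducible $T$.

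The key steps, in order, are: (i) verify that the right conditional expectation $E'_{j+k+2}\colon A_{j+k+1}\to A_{j+k}$ satisfies the Markov property with respect to the unique trace $\tr$ on $\cup_k A_{j+k}$ — this is where trace‑preservation of the right inclusions (already noted in Section \ref{stower}) is used, together with the fact that the trace is the Perron–Frobenius trace of $T$; (ii) apply \cite{J83} Lemma 4.4.1 (the finite‑dimensional basic‑construction lemma, exactly as in the proof of Lemma \ref{lbasicon}) to conclude $A_{j+k+1}=\langle A_{j+k},e'_{j+k+2}\rangle$ for all $k$ with $j+k$ beyond the stabilization point; (iii) pass to the GNS completions: since $M_{j}=\overline{\cup_k A_{j+k}}$ and $M_{j+1}=\overline{\cup_k A_{j+1+k}}$, and since the finite‑dimensional projections $e'_{j+k+2}$ converge strongly to $e'_{j+1}$ (by Corollary \ref{ciee} applied to the right tower, i.e. the right embeddings carry $e'_{j+k+2}$ to the "next" projection), the algebras $\langle A_{j+k},e'_{j+k+2}\rangle$ exhaust $\langle M_j,e'_{j+1}\rangle$; (iv) conclude $M_{j+1}=\langle M_j,e'_{j+1}\rangle$.

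The restriction $j\geq 1$ enters because for $j=0$ the first few inclusion matrices $T_k$ with $k<d$ are not yet symmetric, so the basic‑construction lemma does not apply at the bottom of the tower; once $j\geq 1$ one can start counting from a point where stabilization has occurred (or, more carefully, the Ocneanu‑compactness/commuting‑square machinery of Proposition \ref{pcomtt} already guarantees the relevant squares are symmetric). The main obstacle I anticipate is checking the Markov property for $E'$ rigorously: one must confirm that the same Perron–Frobenius eigenvalue $\beta$ governs both the left and right towers, so that $\tr(e'_{j+k+2})=\beta^{-2}$ as required for the basic construction to be type $\mathrm{II}_1$ with the correct index — this reduces, via self‑duality of $W$, to the symmetry of $T$ established in Lemma \ref{lsymir}, but the bookkeeping relating $E'$ to the trace must be done with care. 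Everything else is a transcription of the left‑hand arguments with left and right interchanged.
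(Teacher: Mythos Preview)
Your approach is correct and matches the paper's: the paper's one-line proof is precisely your step (ii) --- for $j+k-1\geq d$, the right-inclusion triple $A_{j+k-1}\subset A_{j+k}\subset A_{j+k+1}$ is a basic construction by the right-hand analogue of Lemma~\ref{lbasicon} --- with the passage to the limit (your step (iii)) left implicit. One small correction: the restriction $j\geq 1$ has nothing to do with stabilization of the inclusion matrices (that is handled by taking $k$ large enough, independently of $j$); it is there simply because $e'_{j+1}$ is the projection onto $L^2(M_{j-1})$, so one needs $M_{j-1}$ to exist.
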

\begin{proof}
It follows the fact that $A_{j+k+1}$ is the algebra obtained from the basic construction of the pair $A_{j+k-1}\subset A_{j+k}$ with the conditional expectation $e'_{j+k}$ if $j+k-1\geq d$. 
\end{proof}

We will denote these $e_j^{'}$'s by $e_j$ in the discussion of the infinite-dimensional algebras (factors). 
We may obtain a tower of hyperfinite factors from the basic constructions: 
\begin{center}
$M=M_0\subset M_1\subset^{e_2} M_2\subset^{e_3} M_3\subset \dots$. 
\end{center}
Please note our indices of $e_k$ start from $k=2$, which makes $e_k\in A_k$ and $e_{k}\notin A_{k-1}$.


\section{The Bimodules and Their Fusion Rule}\label{sbimod1}

We first have a review of bimodules over $\text{II}_1$ factors. 
One may refer to \cite{BD97,EK98} for more details. 

Let $A$ and $B$ be $\text{II}_1$ factors.
An {\it $A\text{-}B$ bimodule ${}_{A}{H}_B$} is a pair of commuting normal (unital) representations $\pi_L,\pi_R$ of $A$ and $B^{\text{op}}$ respectively on the Hilbert space $H$.
Here $B^{\text{op}}$ is the opposite algebra of $B$, i.e $b_1\cdot b_2=b_2b_1$, which is also a $\text{II}_1$ factor.
Note that ${}_{A}{H}_B$ is a left $A$-module and right $B$-module with the action denoted as $\pi_L(a)\pi_R(b)\xi=a\cdot \xi \cdot b$ with $a\in A,b\in B,\xi\in H$.
We say ${}_{A}{H}_B$ is {\it bifinite} if the left dimension $\dim_A^L{H}<\infty$ and right dimension $\dim_B^R{H}<\infty$.

\begin{definition}
Let $H,K$ be two $A\text{-}B$ bimodules.
We say $H,K$ are equivalent if we have a unitary $u:H\to K$ such that $u(a\cdot \xi \cdot b)=a\cdot u(\xi) \cdot b$ for all $a\in A,b\in B,\xi\in H$ and denoted by ${}_{A}{H}_B\cong{}_{A}{K}_B$.
Moreover, we denote by
\begin{center}
$\Hom_{A,B}(H,K)=\{T\in B(H,K)|T(a\cdot \xi \cdot b)=a\cdot T(\xi) \cdot b\text{~for all~}a\in A,b\in B,\xi\in H\}$
\end{center}
the space of $A\text{-}B$ intertwiners from $H$ to $K$.
Let $\Hom_{A,B}(H)=\Hom_{A,B}(H,H)$
And we call an $A\text{-}B$ bimodule $H$ irreducible if $\Hom_{A,B}(H)=\mathbb{C}$.
\end{definition}

Note that $\Hom_{A,B}(H)\subset B(H)$ is a von Neumann algebra. 
For a $A$-module $H$, $v\in H$ is called {\it $A$-bounded} if we have a positive constant $c_v$ such that
\begin{center}
$\|xv\|\leq c_v\|x\|_2$ for all $x\in A$,
\end{center}
where $\|x\|_2=\tr(x^*x)^{1/2}$. We write $H^{\text{bdd}}$ for the set of all $A$-bounded vectors in $H$.
It can be shown to be a dense subspace of $H$ and also invariant under the action of $A$ and $A'$ which leads to the following result (see \cite{EK98} and \cite{J08}). 
A proof is also provided below for completeness. 

\begin{lemma}
Assume ${}_{A}{H}_B$ is bifinite, then 
\begin{enumerate}
    \item A vector $v\in H$ is $A$-bounded if and only if it is $B$-bounded;
    \item $\Hom_{A,B}(H)$ is a finite dimensional von Neumann algebra.
\end{enumerate}
\end{lemma}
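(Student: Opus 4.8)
Both parts are instances of the principle that bifiniteness forces two-sided control, and I would prove them through the finite-dimensional realization of a finite one-sided module together with Radon--Nikodym densities against the canonical trace. Throughout, for an $A$-bounded vector $\xi$ write $L_\xi\colon L^2(A)\to H$ for the bounded operator extending $\widehat x\mapsto x\cdot\xi$; since $\dim_A^L H<\infty$ one has a Pimsner--Popa basis, i.e.\ $A$-bounded $m_1,\dots,m_N\in H$ with $\sum_iL_{m_i}L_{m_i}^*=1_H$, and likewise on the right.

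For (1), the plan is first to use $\dim_A^L H<\infty$ to identify ${}_AH$, as a left $A$-module, with $e\big(L^2(A)^{\oplus n}\big)$ for some $n$ and a projection $e$ in $M_n(A^{\mathrm{op}})$ (the commutant of the diagonal left $A$-action on $L^2(A)^{\oplus n}$), so that the commutant $\pi_L(A)'$ of $A$ on $H$ becomes the finite von Neumann algebra $Q:=eM_n(A^{\mathrm{op}})e$ with faithful normal trace $\tau_Q$, and the right action becomes a unital normal embedding $\pi_R\colon B^{\mathrm{op}}\hookrightarrow Q$. Under this picture $v\in H$ is $A$-bounded exactly when all its coordinates in $L^2(A)^{\oplus n}$ lie in $A\subset L^2(A)$ (immediate, since $\pi_L(x)$ acts coordinate-wise by left multiplication). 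I would then record the elementary computation showing that for such a $v=(\widehat{a_1},\dots,\widehat{a_n})$ the normal positive functional $\omega_v=\langle\,\cdot\,v,v\rangle$ on $Q$ has a \emph{bounded} Radon--Nikodym derivative $h=h_v\in Q_+$ against $\tau_Q$ (up to the corner by $e$, $h$ is the matrix $(a_j^{\,*}a_i)_{ij}\in M_n(A)$, so $\|h\|$ is controlled by the $a_i$). Finally, letting $E\colon Q\to\pi_R(B^{\mathrm{op}})$ be the $\tau_Q$-preserving conditional expectation (it exists because $\pi_R(B^{\mathrm{op}})$ is a von Neumann subalgebra of the finite algebra $Q$), I would estimate, for $y\in B$,
\[
\|v\cdot y\|^2=\omega_v\big(\pi_R(y)^*\pi_R(y)\big)=\tau_Q\big(\pi_R(y)^*\pi_R(y)\,h\big)=\tau_Q\big(\pi_R(y)^*\pi_R(y)\,E(h)\big)\le\|h\|\,\tau_B(y^*y)=\|h\|\,\|y\|_2^2,
\]
using that $E(h)=\pi_R(k)$ with $0\le k\le\|h\|\cdot1$ and that $\tau_Q$ restricts to $\tau_B$ on the subfactor $\pi_R(B^{\mathrm{op}})$. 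Hence $v$ is $B$-bounded; running the symmetric argument with $A$ and $B$ (and ``left''/``right'') interchanged, using $\dim_B^R H<\infty$, gives the converse, and so $H^{\text{bdd},A}=H^{\text{bdd},B}$, which is (1).

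For (2), I would observe that $\Hom_{A,B}(H)=\pi_L(A)'\cap\pi_R(B)'$ is the relative commutant of $\pi_R(B)$ inside the finite von Neumann algebra $Q=\pi_L(A)'$ of the previous paragraph, and that $\dim_B^R H<\infty$ makes the inclusion $\pi_R(B^{\mathrm{op}})\subset Q$ of finite Jones index (the module $H$ over $Q$ has finite dimension since $e$ is a finite projection in the $\mathrm{II}_1$ factor $M_n(A^{\mathrm{op}})$, whence $[\,Q:\pi_R(B^{\mathrm{op}})\,]=\dim_B^R H/\dim_Q H<\infty$). By the standard fact \cite{J83} that the relative commutant of a finite-index inclusion of $\mathrm{II}_1$ factors is finite-dimensional, $\Hom_{A,B}(H)$ is finite-dimensional. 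Alternatively, arguing directly from (1): with a two-sided bounded family $\{m_i\}_{i=1}^N$ as above, the assignment $T\mapsto\big(L_{m_i}^*TL_{m_j}\big)_{i,j}$ embeds $\Hom_{A,B}(H)$ linearly into $M_N(A)$ — each entry is right multiplication by an element of $A$, by left $A$-linearity of $T$, and injectivity comes from $\sum_iL_{m_i}L_{m_i}^*=1_H$ — while right $B$-linearity of $T$ confines the image to the finite-dimensional relative commutant of the copy of $B^{\mathrm{op}}$ implementing the right action.

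The step I expect to be the main obstacle is the passage in (1) from $A$-boundedness to $B$-boundedness, i.e.\ the identity $H^{\text{bdd},A}=H^{\text{bdd},B}$: this is the only place where both finiteness hypotheses are genuinely used, and it rests on the boundedness of the density $h_v$ and on transporting it across the trace-preserving conditional expectation between $\pi_R(B)$ and $\pi_L(A)'$. Once that is in hand, part (2) is bookkeeping plus a citation, so most of the write-up would go into making the identification $\pi_L(A)'\cong eM_n(A^{\mathrm{op}})e$ and the density computation precise.
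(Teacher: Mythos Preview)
Your argument is correct, and for part (2) it is essentially the same as the paper's: both identify $\Hom_{A,B}(H)$ as the relative commutant of a finite-index inclusion of $\text{II}_1$ factors (the paper uses $A\subset(B^{\mathrm{op}})'$, you use the dual inclusion $B^{\mathrm{op}}\subset A'$, but these give the same relative commutant and the same finiteness conclusion via \cite{J83}).

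The genuine difference is part (1). The paper does not prove it at all --- it simply writes ``For the proof of 1, see \cite{J08}'' and moves on. Your Radon--Nikodym/conditional-expectation argument is one of the standard ways this fact is established (it is essentially what appears in the Anantharaman--Popa notes and related sources), and it is correct: the key point is exactly that $A$-boundedness of $v$ translates into boundedness of the density $h_v\in Q_+$, and then the trace-preserving conditional expectation onto $\pi_R(B^{\mathrm{op}})$ lets you push the estimate through to the right action. So here you are supplying content the paper omits.

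One small remark: your ``alternative'' route to (2) via the embedding $T\mapsto(L_{m_i}^*TL_{m_j})_{ij}$ is not self-contained as written --- the target is $M_N(A)$, which is infinite-dimensional, and the assertion that the image lands in a \emph{finite-dimensional} relative commutant is precisely the finite-index fact you already invoked in the first argument. So this alternative does not avoid the citation to \cite{J83}; it is fine to mention as a reformulation, but it does not stand on its own.
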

\begin{proof}
We only prove 2. 
For the proof of $1$, see \cite{J08}. 
Note that $\Hom_{A,B}(H)=A'\cap(B^{\text{op}})'\cap B(H)$ is centainly a von Neumann algebra.
If ${}_{A}{H}_B$ is bifinite, we have
$A\subset (B^{\text{op}})'\cap B(H)$ by the commuting action.
This imlies an inclusion of $\text{II}_1$ factors where
\begin{center}
$[(B^{\text{op}})'\cap B(H):A]=\frac{\dim_{B^{\text{op}}}(H)}{\dim_A(H)}=\frac{1}{\dim_A(H)\dim_B(H)}<\infty$.
\end{center}
Hence $\Hom_{A,B}(H)=A'\cap(B^{\text{op}})'\cap B(H)$ is a relative commutant of a pair of factors with finite index.
So, by \cite{J83}, it is finite-dimensional.
\end{proof}

\begin{corollary}
If ${}_{A}{H}_B$ is bifinite and $p$ is a projection in $\Hom_{A,B}(H)$, then $Hp$ is an irreducible $A\text{-}B$ bimodule if and only if $p$ is minimal.
\end{corollary}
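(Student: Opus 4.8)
The plan is to reduce the statement to the internal structure of the finite‑dimensional von Neumann algebra $\Hom_{A,B}(H)$, using the reduction (induction) theorem for von Neumann algebras. First I would record the elementary observations. Since $p\in\Hom_{A,B}(H)=\pi_L(A)'\cap\pi_R(B^{\mathrm{op}})'\cap B(H)$, it commutes with both the left $A$-action and the right $B$-action on $H$; hence its range $Hp=pH$ is invariant under these actions, and the compressions $a\mapsto \pi_L(a)p|_{Hp}$, $b\mapsto\pi_R(b)p|_{Hp}$ endow $Hp$ with the structure of an $A$-$B$ bimodule. It is again bifinite, being a sub-bimodule of a bifinite one, since the left and right dimensions can only decrease when passing to an invariant subspace; in particular it makes sense to ask whether $Hp$ is irreducible.

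The heart of the argument is the identification
\[
\Hom_{A,B}(Hp)\;=\;p\,\Hom_{A,B}(H)\,p \qquad(\text{acting on }Hp).
\]
To prove it, let $\mathcal{A}\subseteq B(H)$ be the von Neumann algebra generated by $\pi_L(A)$ and $\pi_R(B^{\mathrm{op}})$, so that $\mathcal{A}'=\Hom_{A,B}(H)$ and $p\in\mathcal{A}'$. The map $x\mapsto xp|_{Hp}$ is a normal $*$-homomorphism of $\mathcal{A}$ (here one uses $p\in\mathcal{A}'$), so the compressed actions on $Hp$ generate exactly the induced algebra $\mathcal{A}_p=\{\,xp|_{Hp}:x\in\mathcal{A}\,\}$, and by definition an operator on $Hp$ lies in $\Hom_{A,B}(Hp)$ if and only if it lies in $(\mathcal{A}_p)'$. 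The reduction theorem for a projection in the commutant (see e.g. Takesaki, \emph{Theory of Operator Algebras I}) gives $(\mathcal{A}_p)'=p\,\mathcal{A}'\,p|_{Hp}=p\,\Hom_{A,B}(H)\,p$, which is the claimed identity. I expect this step to be the only genuinely technical point: one must set up $\mathcal{A}$ correctly, verify that $\mathcal{A}_p$ is precisely the von Neumann algebra generated by the restricted $A$- and $B$-actions on $Hp$, and invoke the appropriate form of the reduction/induction theorem; everything else is formal.

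Granting the identification, the corollary follows immediately. By the preceding lemma, $\Hom_{A,B}(H)$ is a finite-dimensional von Neumann algebra. A nonzero projection $p$ in such an algebra is minimal precisely when $p\,\Hom_{A,B}(H)\,p=\mathbb{C}p$ (a rank-one projection inside a single matrix block), while $p=0$ is neither minimal nor gives an irreducible bimodule, so this edge case is consistent. Combining this with the identification above, $Hp$ is irreducible, i.e. $\Hom_{A,B}(Hp)=\mathbb{C}$, if and only if $p\,\Hom_{A,B}(H)\,p=\mathbb{C}p$, if and only if $p$ is a minimal projection of $\Hom_{A,B}(H)$, which is exactly the assertion.
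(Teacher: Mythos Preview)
Your proposal is correct and follows essentially the same approach as the paper: both rest on the identification $\Hom_{A,B}(Hp)=p\,\Hom_{A,B}(H)\,p$. You justify this identity more carefully via the reduction theorem, and you run both directions of the biconditional through it, whereas the paper handles the non-minimal direction by directly splitting $p=p_1+p_2$ and hence $Hp=Hp_1\oplus Hp_2$; the content is the same.
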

\begin{proof}
If $p$ is minimal, $\Hom_{A,B}(Hp)=p\Hom_{A,B}(H)=\mathbb{C}p\cong \mathbb{C}$.
Otherwise, assume $p=p_1+p_2$ is a decomposition into two subprojections, then $Hp=Hp_1\oplus Hp_2$, which is a direct sum of $A\text{-}B$ bimodules.
\end{proof}

Now let $A,B,C$ be $\text{II}_1$ factors.
Given an $A\text{-}B$ bimodule ${}_{A}{H}_B$ and a $B\text{-}C$ bimodule ${}_{B}{K}_C$, we define the $A\text{-}C$ bimodule of their tensor as \cite{EK98}, which is given by the completion of the algebraic tensor product ${}_{A}{H}^{\text{bdd}}_B\otimes {}_{B}{K}^{\text{bdd}}_C$ of bounded subspace with respect to the inner product defined by
\begin{center}
$\langle v_1\otimes u_1,v_2\otimes v_2\rangle=\langle v_1\langle u_1,u_2 \rangle_B,v_2 \rangle$
\end{center}
Here $\langle u_1,u_2 \rangle_B \in B$ is uniquely determined by
\begin{center}
$\tr(x\langle u_1,u_2 \rangle_B)=\langle xu_1,u_2 \rangle_B$ for all $x\in B$.
\end{center}

It is easy to check the following properties \cite{EK98}:
\begin{enumerate}
\item $\langle \lambda u_1+\mu u_2,u_3 \rangle_B=\lambda \langle u_1,u_3 \rangle_B+\mu \langle u_2,u_3 \rangle_B$,
\item $\langle u_1,u_2 \rangle_B=\langle u_2,u_1 \rangle_B^*$,
\item $\langle xu_1,u_2 \rangle_B=x\langle u_1,u_2 \rangle_B$ and $\langle u_1,xu_2 \rangle_B=\langle u_1,u_2 \rangle_B x^*$.
\end{enumerate}
One may refer to \cite{BD97} for general descriptions of bimodules. 

Consider the tower of $\text{II}_1$ factors
\begin{center}
$M=M_0\subset^{e_1}M_1\subset^{e_2}M_2\subset \cdots$
\end{center}
with $e_k\in M_k$ by iterating the basic constructions $M_{k-1}\subset M_k \subset^{e_{k+1}}M_{k+1}=\langle M_k,e_{k+1}\rangle=(M_k\cup\{e_{k+1}\})''\subset B(L^2(M_k))$. 
Observe the $M\text{-}M$ bimodule $L^2(M_j)$ with the action induced from the two sided action of $A_k$ on $A_{j+k}$ is given by
\begin{center}
$a\cdot \xi \cdot b=i_{k,j+k}(a)\xi (i_{k,j+k}(b^*))$
\end{center}
with $a,b\in A_k, \xi\in A_{k,j+k}$. 
We define a projection 
\begin{center}
$g_k=D^{k(k-1)}(e_{k+1}e_{k}\dots e_2)(e_{k+2}e_{k+1}\dots e_3)\cdots(e_{2k}e_{2k-1}\cdots e_{k+1})$,      
\end{center}
where $D=\sqrt{[M_1:M]}$. 
We have $M\subset M_k\subset^{g_k}M_{2k}$ is the basic construction \cite{BD97}. 
We can also define the actions $\pi_k$ of $M_k,M_{2k}$ on $L^2(M_k)$ as following:
\begin{enumerate}
\item $\pi_k(x)(\hat{z})=\widehat{xz}$, for all $\hat{z}\in \widehat{M_k}\subset L^2(M_k)$, 
\item $\pi_k(xg_ky)(\hat{z})=x\widehat{E^{M_k}_{N}(yz)}$ for all $xg_ky\in M_{2k}$ and $x,y,z\in M_k$. 
\end{enumerate}

\begin{proposition}[\cite{BD97}]\label{p+2}
Let $p,q\in M'\cap M_{2k}$ be two equivalent projections and $M_{2k}\subset^{e_{2k+1}}\subset M_{2k+1}\subset^{e_{2k+2}}M_{2k+2}$. 
Then we have
\begin{equation*}
\begin{aligned}
\pi_k(p)L^{2}(M_k)&\cong \pi_k(q)L^{2}(M_k),  \text{~and}\\
\pi_k(p)L^{2}(M_k)&\cong \pi_{k+1}(pe_{2k+2})L^2(M_{k+1}) 
\end{aligned}
\end{equation*}
as $M\text{-}M$ bimodules. 
\end{proposition}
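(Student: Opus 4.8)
The plan is to use the basic-construction presentation $M\subset M_k\subset^{g_k}M_{2k}$ to convert projections in $M'\cap M_{2k}$ into $M$-$M$ sub-bimodules of $L^2(M_k)$, and then to realise both asserted isomorphisms by exhibiting explicit bimodule intertwiners. First I would record that $\pi_k$ maps $M'\cap M_{2k}$ into $\Hom_{M,M}(L^2(M_k))$: the left $M$-action on $L^2(M_k)$ is $\pi_k|_M$, so $\pi_k(x)$ commutes with it whenever $x\in M'$; and since $M\subset M_k\subset^{g_k}M_{2k}$ is a basic construction, the commutant $\pi_k(M_{2k})'\cap B(L^2(M_k))$ is exactly the right $M$-action $J_kMJ_k$, so $\pi_k(x)$ also commutes with the right $M$-action when $x\in M_{2k}$. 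Hence for every projection $p\in M'\cap M_{2k}$ the subspace $\pi_k(p)L^2(M_k)$ is an $M$-$M$ sub-bimodule of ${}_ML^2(M_k)_M$, which is what makes the objects in the statement bimodules at all.

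For the first isomorphism, $M'\cap M_{2k}$ is a finite-dimensional von Neumann algebra, so $p\sim q$ furnishes $u\in M'\cap M_{2k}$ with $u^*u=p$ and $uu^*=q$. By the same commutation facts $\pi_k(u)\in\Hom_{M,M}(L^2(M_k))$, and it restricts to a unitary from $\pi_k(p)L^2(M_k)$ onto $\pi_k(q)L^2(M_k)$ intertwining both $M$-actions; this gives the isomorphism ${}_M\pi_k(p)L^2(M_k)_M\cong{}_M\pi_k(q)L^2(M_k)_M$.

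The substance is the second isomorphism, for which I would work under $k\ge1$, so that $M_{k+1}\subset M_{2k}$ and hence the Jones projection $e_{2k+2}$ of $M_{2k}\subset M_{2k+1}\subset M_{2k+2}$ commutes with $M_{k+1}$; this also makes $pe_{2k+2}=e_{2k+2}p$ a projection in $M'\cap M_{2k+2}$ whenever $p\in M'\cap M_{2k}$, so the right-hand side is well posed. The goal is to build a partial isometry $\Phi\colon L^2(M_k)\to L^2(M_{k+1})$ with $\Phi\Phi^*=\pi_{k+1}(e_{2k+2})$ that is $M$-$M$-bimodular and satisfies $\Phi\,\pi_k(x)=\pi_{k+1}(xe_{2k+2})\,\Phi$ for all $x\in M'\cap M_{2k}$. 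Concretely one can set $\Phi(\hat z)=D\,\pi_{k+1}(e_{2k+2})\,\hat z$ on the dense subspace $\widehat{M_k}\subset L^2(M_k)$, viewing $z\in M_k\subset M_{k+1}$ and $D=\sqrt{[M_1:M]}$, and then verify — via the push-down/pull-down identity $e_{2k+2}M_{2k+1}e_{2k+2}=M_{2k}e_{2k+2}$ together with the compatibility of the conditional expectations $E^{M_k}_M$, $E^{M_{k+1}}_M$ and $E^{M_{2k+1}}_{M_{2k}}$ — that $\Phi$ is well defined and isometric, that its range is exactly $\pi_{k+1}(e_{2k+2})L^2(M_{k+1})$, and that the intertwining relation holds; here the explicit formulas for $\pi_k$ and $\pi_{k+1}$ in terms of left multiplication, conditional expectations, and $g_k,g_{k+1}$ are precisely what one feeds into the computation. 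Granting this, $\Phi$ restricts to an $M$-$M$ bimodule isomorphism $\pi_k(p)L^2(M_k)\to\pi_{k+1}(pe_{2k+2})L^2(M_{k+1})$, and together with the first part the correspondence $p\mapsto{}_M\pi_k(p)L^2(M_k)_M$ is seen to be independent of $k$ and of the representative within an equivalence class.

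I expect the main obstacle to be exactly the verification that $\Phi$ is isometric with the stated range: that is where the push-down lemma for $e_{2k+2}$, the compatibility of traces and conditional expectations across the two towers, and the correct power of $D$ in the normalisation all have to be assembled at once, and it is also what pins down the hypothesis $k\ge1$ (the degenerate case $k=0$ being handled separately or excluded). Once $\Phi$ is in hand, the intertwining identity and the reduction to the first isomorphism are routine.
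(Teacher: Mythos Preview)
The paper does not supply a proof of this proposition; it is quoted from \cite{BD97} and used as a black box throughout Section~\ref{sbimod1}, so there is nothing in the paper itself to compare your argument against. Your sketch nonetheless follows the route of the cited reference: the first isomorphism, via a partial isometry $u\in M'\cap M_{2k}$ implementing $p\sim q$, is immediate and correct, and the second is obtained in \cite{BD97} by constructing an explicit $M$-$M$ bimodular isometry from $L^2(M_k)$ onto $\pi_{k+1}(e_{2k+2})L^2(M_{k+1})$ using the basic-construction relations. Your identification of the isometry verification as the crux, with the push-down identity and the normalising factor $D$ as the essential inputs, matches the structure of Bisch's argument. The one place your sketch would need tightening in execution is that $\pi_{k+1}(e_{2k+2})\hat z$ is not read off directly from the two defining clauses for $\pi_{k+1}$ (which are phrased via $g_{k+1}$ and left multiplication by elements of $M_{k+1}$); in practice one first expresses $e_{2k+2}$ in those terms, or equivalently identifies its image under $\pi_{k+1}$ spatially on $L^2(M_{k+1})$, before the computation can be carried out.
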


Let $J_k:L^2(M_k)\to L^2(M_k)$ be the modular conjugation defined by $J_k(\hat{x})=\hat{x^*}$. 
Then we have $J^2_k=\text{id}$ and $J_k\pi_k(M)'J_k=\pi_{k}(M_{2k})$. 

Now we will construct the shifts between the higher commutants. 
Let $\gamma_k:M'\cap M_{2k}\to M'\cap M_{2k}$ be the surjective linear $*$-antiisomorphism defined by $\pi_k(\gamma_k(x))=J_k\pi_k(x)^{*}J_k$. 
Then we get a trace preserving, surjective $*$-isomorphism $sh_{2k}$ given by
\begin{center}
$sh_{2k}=\gamma_{2j+2k}\gamma_{2j}:M'\cap M_{2j}\to M_{2k}'\cap M_{2j+2k}$. 
\end{center}
Then we obtain the following proposition, which generalizes \cite{BD97} Theorem 4.6.c. 

\begin{theorem}\label{ttensor}
Let $p\in M'\cap M_{2j}$, $q\in M'\cap M_{2k}$ be projections and $\sh_{2j}:M'\cap M_{2k}\to M_{2j}'\cap M_{2j+2k}$ be the shift as above. Then, 
\begin{center}
$\pi_{j}(p)L^2(M_j)\otimes \pi_{k}(q)L^2(M_k)\cong \pi_{j+k}(p\sh_{2j}(q))L^2(M_{j+k})$
\end{center}
as $M\text{-}M$ bimodules. 
And $p\sh_{2j}(q)\in M'\cap M_{2j+2k}$ is a projection with trace $\tr_{M_{2j+2k}}(p\sh_{2j}(q))=\tr_{M_{2j}}(p)\tr_{M_{2k}}(q)$. 
\end{theorem}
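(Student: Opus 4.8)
The plan is to build an explicit unitary intertwiner between the two $M$-$M$ bimodules and then compute the trace of the resulting projection. The strategy follows the classical Bisch--Das template (\cite{BD97} Theorem 4.6), but one must carefully track the role of the two conditional expectations and the shift $\sh_{2j}$. First I would reduce to the case $p = q = 1$: if one establishes a canonical unitary
\begin{equation*}
U\colon L^2(M_j)\otimes_M L^2(M_k)\xrightarrow{\ \sim\ }L^2(M_{j+k})
\end{equation*}
of $M$-$M$ bimodules, then for projections $p\in M'\cap M_{2j}$ and $q\in M'\cap M_{2k}$ one checks that $U$ carries the subbimodule $\pi_j(p)L^2(M_j)\otimes_M \pi_k(q)L^2(M_k)$ onto $\pi_{j+k}(p\cdot\sh_{2j}(q))L^2(M_{j+k})$. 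The point here is that $\pi_j(p)$ acts on the left tensor factor, so under $U$ it becomes $\pi_{j+k}(p)$ (using that $M_{2j}\subset M_{2j+2k}$ via the left inclusion, whence $p$ survives as a projection in $M'\cap M_{2j+2k}$); meanwhile $\pi_k(q)$ acts on the right tensor factor, and the shift $\sh_{2j}=\gamma_{2j+2k}\gamma_{2j}$ is precisely the $*$-isomorphism $M'\cap M_{2k}\to M_{2j}'\cap M_{2j+2k}$ that records how a projection acting "from the $M_k$-side" is transported into $M_{2j+2k}$ after tensoring on the left by $L^2(M_j)$. So the two actions commute and combine to $\pi_{j+k}(p\,\sh_{2j}(q))$, which is a projection since $p$ and $\sh_{2j}(q)$ lie in commuting subalgebras of $M'\cap M_{2j+2k}$.

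For the construction of $U$ itself, I would use the identification of $L^2(M_{j+k})$ as an iterated relative tensor product. Recall $M\subset M_{j+k}$ sits inside the tower, and $L^2(M_{j+k})\cong L^2(M_j)\otimes_{M_j}L^2(M_{j+k})$ trivially; the real content is $L^2(M_{j+k})\cong L^2(M_j)\otimes_M L^2(M_k)$ as $M$-$M$ bimodules, which follows from the fact that iterating the basic construction corresponds, on the bimodule level, to the relative tensor product of the "standard" bimodules $L^2(M_1)$, together with associativity of $\otimes_M$. Concretely, one sends $x\xi_j\otimes y\xi_k\mapsto$ (an appropriate element built from $x$, $y$ and the embeddings $i^R$), using the conditional expectations $E'$ from Section~\ref{sfactor} to define the inner products compatibly; the bounded-vector description of $\otimes_M$ and properties (1)--(3) of the $B$-valued inner product make the verification that $U$ is a well-defined isometry routine. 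Surjectivity follows because the image contains a dense set of vectors of the form $x\widehat{z}$ with $x\in M_k$, $z\in M_{j+k}$.

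Finally, for the trace formula, I would invoke Proposition~\ref{p+2} and the multiplicativity of the Markov trace under the basic construction. Since $\sh_{2j}$ is trace-preserving by construction (it is a composite of the $\gamma$'s, each of which is trace-preserving as noted before the statement), and since $p$ and $\sh_{2j}(q)$ sit in commuting subfactors of $M'\cap M_{2j+2k}$ whose traces multiply — this is exactly the commuting-square / independence feature of $M'\cap M_{2j}$ and $M_{2j}'\cap M_{2j+2k}$ inside $M'\cap M_{2j+2k}$ — one gets $\tr_{M_{2j+2k}}(p\,\sh_{2j}(q)) = \tr_{M_{2j}}(p)\cdot\tr_{M_{2k}}(q)$. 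The main obstacle I anticipate is not any single computation but the bookkeeping of left versus right inclusions: the paper uses two genuinely different embeddings $A_k\hookrightarrow A_{j+k}$ (and correspondingly $E$ versus $E'$), and one must be scrupulous about which tensor slot each projection acts in and which shift intertwines them, so that the formula $p\,\sh_{2j}(q)$ comes out with the shift applied to $q$ and not to $p$. Verifying that $U$ genuinely intertwines the $M$-$M$ actions — rather than, say, an $M$-$M$ action twisted by an automorphism — is where the care is needed.
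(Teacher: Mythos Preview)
Your approach is sound in principle but takes a different route from the paper's. You propose to build an explicit unitary $U\colon L^2(M_j)\otimes_M L^2(M_k)\to L^2(M_{j+k})$ from scratch, reduce to $p=q=1$, and then track how the action of $q$ on the right factor transports to $\sh_{2j}(q)$. The paper instead reduces to the \emph{equal-index} case and invokes \cite{BD97} Theorem~4.6(c) as a black box: assuming $j\geq k$, it pads $q$ with Jones projections to obtain $qe_{2k+2}\cdots e_{2j}\in M'\cap M_{2j}$, applies the cited result to $p$ and this padded projection (both now in $M'\cap M_{2j}$), and then uses Proposition~\ref{p+2} twice together with the identity $\sh_{2j}(e_i)=e_{2j+i}$ to strip the padding on each side. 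This bypasses exactly the bookkeeping you flag as the main obstacle---verifying that the right-factor action of $q$ becomes $\sh_{2j}(q)$ under your $U$---since that identification is already encoded in \cite{BD97} for $j=k$, and the reduction to that case is purely algebraic manipulation of the $e_i$'s. Your approach is more self-contained and would reprove part of \cite{BD97} along the way; the paper's is shorter but leans on the cited result and on Proposition~\ref{p+2}. The trace computation is handled the same way in both.
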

\begin{proof}
Observe that $p$ and $sh_{2j}(q)$ are commuting projections in $M'\cap M_{2j+2k}$, 
so $psh_{2j}(q)$ is also a projection with the trace as stated above. 

Without loss of generality, we assume $j\geq k$. 
We have $qe_{2k+2}\dots e_{2j}\in M'\cap M_{2j}$. 
By \cite{BD97} Theorem 4.6 c). we obtain
\begin{center}
$\pi_{j}(p)L^2(M_j)\otimes \pi_{j}(qe_{2k+2}\dots e_{2j})L^2(M_k)\cong \pi_{2j}(p\sh_{2j}(qe_{2k+2}\dots e_{2j}))L^2(M_{j+k})$. 
\end{center}
And by Proposition \ref{p+2}, we have $\pi_{j}(qe_{2k+2}\dots e_{2j})L^2(M_j)\cong \pi_{k}(q)L^2(M_k)$. 

We can show that $sh_{2j}(e_i)=e_{2j+i}$. 
Note that $q$ commutes with all $e_{2k+2},\dots,e_{2_j}$, we have
$p\sh_{2j}(qe_{2k+2}\dots e_{2j})=p\sh_{2j}(q)e_{2j+2k+2}\dots e_{4j}$. Then by Proposition \ref{p+2} again, we obtain
$\pi_{2j}(p\sh_{2j}(qe_{2k+2}\dots e_{2j}))L^2(M_{j+k})\cong \pi_{j+k}(p\sh_{2j}(q))L^2(M_{j+k})$, 
which completes the proof. 
\end{proof}

\underline{The construction of bimodules}
Let $M_j$'s be the $\text{II}_1$ factors that are constructed in Section \ref{stower} and Section \ref{sfactor}. 
Let us consider the Jones tower of $\text{II}_1$ factors
\begin{center}
$M=M_0\subset M_1\subset^{e_2}M_2\subset \cdots$
\end{center}
By Proposition \ref{pcomtt} and \cite{BD97} Proposition 3.2, we have $\Hom_{M\text{-}M}({}_{M}{L^2(M_j)}_n)=M'\cap M_{2j}\cong A_{2j}$. 

Recall each $V_i$ must be in a summand of $W^{\otimes k}$ when $k\geq d(l)$ by Lemma \ref{liffdl}. 
For each simple object $V_i$ in $R_l(\mathfrak{g})$, we define
\begin{center}
$k_i=\min\{k\geq 0|\Hom(V_i,W^{\otimes k})\neq 0\}$, 
\end{center}
which is the minimal integer $k$ such that $W^{\otimes k}$ contains $V_i$. 
Note if $V_i$ is fundamental, $k_i=1$.

Define a map $\phi:\{1,\dots,m\}\to \mathbb{Z}_2$ by $\phi(i)=k_i\mod 2$. 
It should be mentioned that $\phi(1)=0$ as $V_1=W^{\otimes 0}$  and $\phi(i)=1$ if $V_i$ is fundamental as $W$ is the direct sum of fundamental ones. 
We are now able to construct the simple bimodules as follows. 
For any central projection $p\in A_k$, we let $z(p)$ denote the projection in $\mathcal{Z}(A_k)$ which is equivalent to $p$ in $A_k$. 
As all these $A_k$'s are multi-matrix algebras, $z(p)$ would be a sum of diagonal matrices with only $0$ and $1$ on the diagonals. 
\begin{itemize}
\item If $\phi(i)=0$, i.e. $k_i$ is even, say $k_i=2r_i$. 
We take a minimal projection $g_i$ in $A_{2r_i}=M'\cap M_{2r_i}$ such that $z(g_i)$ is the projection from $W^{\otimes 2r_i}$ on $V_i$. 
We let $H_i=\pi_{r_i}(g_i)L^2(M_{r_i})$.  
    
\item If $\phi(i)=1$, i.e. $k_i$ is odd, say $k_i=2{r_i}-1$. 
We take a minimal projection $g_i'\in A_{2{r_i}-1}=M'\cap M_{2{r_i}-1}$ such that $z(g_i)$ is also the projection from $W^{\otimes 2r_{i}-1}$ on $V_i$. 
Define $g_i=g_i'\otimes \text{id}_{V_0}\in A_{{k_i}+1}=M'\cap M_{2r_i}$ and let $H_i=\pi_{r_i}(g_i)L^2(M_{r_i})$.
\end{itemize}
By Proposition \ref{p+2}, these bimodules only depend on the equivalence class of the projections but not the particular choice of the minimal projection $g_i$.
In particular, we let $H_1$ denote the standard bimodule $L^2(M)$, which corresponds to the unique nontrivial projection in $\End(W^{\otimes 0})\cong\mathbb{C}$.

\underline{The construction of the fusion category $\text{Bimod}(M,M_1)$} 
Define a category
\begin{center}
    $\text{Bimod}(M,M_1)=\{$the equivalence classes of $M$-$M$ bimodules in $\cup_{j}L^2(M_j)\}$, 
\end{center} 
where $M_j$ is obtained from the basic construction of $M_{j-2}\subset M_{j-1}$ for each $j\geq 2$. 
It is well-known to be the tensor category generated by the equivalence class $\pi_{j}(p)L^2(M_j)$ with a minimal projection $p\in M'\cap M_{2j}$ for $j\geq 0$. 
It can also be shown $\text{Bimod}(M,M_1)$ is generated by the fundamental ones: $H_i=\pi_1(p_i)L^2(M_1)$ with the projection $p_i\colon W=V(0)\oplus(\oplus_{1\leq k\leq n}V(\omega_k))\to V(\omega_i)$. 

\begin{lemma}
$\text{Bimod}(M,M_1)$ is a fusion category with simple objects $H_i$'s defined above.
\end{lemma}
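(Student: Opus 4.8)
The plan is to verify the three defining properties of a fusion category for $\mathrm{Bimod}(M,M_1)$: (i) it is a $\mathbb{C}$-linear semisimple rigid tensor category with finitely many simple objects; (ii) the objects $H_i$ enumerated above are precisely the (pairwise inequivalent) simple objects; (iii) the monoidal unit $H_1 = {}_M L^2(M)_M$ is simple. Most of the structural content is already in place: the tensor product of $M$-$M$ bimodules together with Theorem \ref{ttensor} gives the monoidal structure, and the identification $\Hom_{M\text{-}M}({}_M L^2(M_j)_M) = M'\cap M_{2j}\cong A_{2j}$ makes every such bimodule a finite direct sum of irreducibles indexed by the minimal central projections of the multi-matrix algebra $A_{2j}$. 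Rigidity is automatic for bifinite bimodules over $\mathrm{II}_1$ factors (the contragredient bimodule supplies duals), so the only real work is the bijection between simple objects and the weights in $D_l$.

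First I would fix the counting. By Proposition \ref{pcomtt} (together with Lemma \ref{liffdl}), for $j \ge d(l)$ the center $\mathcal{Z}(A_{2j})$ has dimension $m = |D_l|$, so ${}_M L^2(M_j)_M$ decomposes into exactly $m$ irreducibles up to multiplicity, one for each weight in $D_l$. I would then argue that each $H_i$ constructed above is irreducible: $g_i$ (resp.\ $g_i'\otimes\mathrm{id}_{V_0}$) is by construction a \emph{minimal} projection of $A_{k_i}$ (resp.\ $A_{k_i+1}$), hence of $M'\cap M_{2r_i}$, so by the Corollary characterizing irreducible bimodules via minimal projections, $H_i = \pi_{r_i}(g_i)L^2(M_{r_i})$ is a simple $M$-$M$ bimodule. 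Independence from the choice of minimal projection within its equivalence class is Proposition \ref{p+2}. Next I would show the $H_i$ are pairwise inequivalent and that every simple object of $\mathrm{Bimod}(M,M_1)$ is equivalent to some $H_i$: given any minimal projection $p \in M'\cap M_{2j}\cong A_{2j}$, the central support $z(p)$ sits over a unique simple summand $V_i$ of $W^{\otimes 2j}$; using $e_{2j+2}\cdots$ to shift down (Proposition \ref{p+2}) one reduces $2j$ to $k_i$ and recognizes $\pi_j(p)L^2(M_j)\cong H_i$, while two minimal projections with different central supports $V_i\ne V_{i'}$ yield inequivalent bimodules because their images under the trace-preserving identifications live over distinct central blocks. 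This establishes a bijection $\{H_i\}_{i=1}^m \leftrightarrow D_l$.

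It remains to check closure under the tensor product — i.e.\ that $\mathrm{Bimod}(M,M_1)$ is genuinely monoidal with finitely many simples and not merely a generating set. By Theorem \ref{ttensor}, $H_i \otimes H_j \cong \pi_{r_i+r_j}\big(g_i\,\mathrm{sh}_{2r_i}(g_j)\big)L^2(M_{r_i+r_j})$, which is again an $M$-$M$ bimodule in $L^2(M_{r_i+r_j})$, hence a finite direct sum of the $H$'s; semisimplicity of $A_{2(r_i+r_j)}$ gives the decomposition into simples. Since there are only $m$ simple classes total, the category is fusion. Finally $H_1 = {}_M L^2(M)_M$ is irreducible precisely because $M$ is a factor, so $\Hom_{M\text{-}M}(L^2(M)) = M'\cap M = \mathbb{C}$, giving the unit object.

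\textbf{Main obstacle.} I expect the delicate point to be the surjectivity half of the bijection: showing that \emph{every} simple $M$-$M$ subbimodule of every $L^2(M_j)$ is equivalent to one of the finitely many $H_i$, uniformly in $j$. This requires carefully tracking how the shift maps $\mathrm{sh}_{2k}$ and the projections $e_k$ interact with the central supports — i.e.\ that shifting a minimal projection of $A_{2j}$ down by the Jones projections $e_{2j}, e_{2j-1},\dots$ lands it on the block $A_{k_i}$ corresponding to the \emph{same} weight $V_i$, with no collision or loss. This is exactly the bookkeeping that makes Proposition \ref{p+2} applicable across all levels at once, and it is where the self-duality and irreducibility of the inclusion matrix $T$ from Lemma \ref{lsymir} do the heavy lifting.
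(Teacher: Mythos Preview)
Your proposal is correct and follows the same line as the paper's proof: closure under tensor via Theorem~\ref{ttensor}, finiteness of simple objects from finite depth, and the bijection between simples and equivalence classes of minimal projections in the higher relative commutants. The paper's argument is a three-sentence sketch that delegates your ``main obstacle'' (the surjectivity of $\{H_i\}\to\{\text{simples}\}$) to a citation of \cite{BD97}, whereas you spell out the rigidity, the simplicity of the unit, and the central-support bookkeeping explicitly; but the underlying mechanism is identical.
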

\begin{proof}
By Theorem \ref{ttensor}, they are closed under tensor products. 
Since the inclusion $M\subset M_1$ of $\text{II}_1$ factors is of the finite depth $d(l)$, there are finitely many simple objects. 
These objects are in one-to-one correspondence with the (equivalence classes of) minimal projections in the higher commutants $M'\cap M_{2i}$ \cite{BD97},
which give us the bimodule $H_i$'s.  
\end{proof}

The rest of this section is mainly devoted to proving the following theorem. 

\begin{theorem}\label{tfuscat}
As a fusion category, $\text{Bimod}(M,M_1)\cong R_l(G)$ .  
\end{theorem}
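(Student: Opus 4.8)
The plan is to set up a functor $F\colon R_l(G)\to \text{Bimod}(M,M_1)$ on objects by $[V_i]\mapsto H_i$ and check it is an equivalence of fusion categories. First I would observe that both sides have exactly $m=|D_l|$ isomorphism classes of simple objects: on the left this is Proposition \ref{plfus}(1), and on the right it follows from the finite-depth statement and the bijection between simple $M$-$M$ bimodules in $\cup_j L^2(M_j)$ and minimal projections in $\cup_j M'\cap M_{2j}\cong \cup_j A_{2j}$ established via Proposition \ref{pcomtt}. The construction of the $H_i$ already pins down this bijection: a minimal projection $g_i\in A_{k_i}$ (padded by $\text{id}_{V_0}$ when $k_i$ is odd) whose central support is the projection of $W^{\otimes k_i}$ onto $V_i$ yields $H_i$, and distinct $i$ give inequivalent bimodules because their central supports live over distinct simple summands. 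So $F$ is a bijection on simple objects, with $F(H_1)=L^2(M)$ the tensor unit.

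Next I would verify $F$ respects tensor products, i.e. that the fusion coefficients match:
\begin{center}
$H_i\otimes H_j\cong \bigoplus_{t} N_{ij}^t\, H_t$, where $[V_i]\otimes_l[V_j]=\sum_t N_{ij}^t\,[V_t]$ in $R_l(G)$.
\end{center}
For this I would use Theorem \ref{ttensor}: $H_i\otimes H_j\cong \pi_{r_i+r_j}\!\big(g_i\,\sh_{2r_i}(g_j)\big)L^2(M_{r_i+r_j})$, so the multiplicity of $H_t$ in $H_i\otimes H_j$ is computed inside the finite-dimensional algebra $A_{2(r_i+r_j)}=M'\cap M_{2(r_i+r_j)}$ as the number of minimal subprojections of $g_i\sh_{2r_i}(g_j)$ lying over the central component indexed by $V_t$. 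Since $A_k=\End(W^{\otimes k})$, this multiplicity equals $\dim\Hom_{\mathfrak{g}}\big(V_t, V_i\otimes V_j\otimes W^{\otimes(k_i+k_j-2)}\big)$ after passing through the central supports — more precisely, the shift $\sh$ implements exactly the right tensor inclusion $i^R$ used in Section \ref{sfactor}, so that $g_i\sh_{2r_i}(g_j)$ has central support the projection of $W^{\otimes(k_i+k_j)}$ onto the image of $V_i\otimes V_j$. The key algebraic input is then Corollary \ref{cdldl} together with Proposition \ref{plfus}: because every simple summand of $V_i\otimes V_j$ has highest weight in $D_l$ (Proposition \ref{pdlwts}, using $[V_i]\otimes_l[V_j]=[V_i\otimes V_j]$ holds at the level of the Verlinde ring exactly on the relevant weights), the $\mathfrak{g}$-module decomposition and the $LG$-module decomposition agree, and the bimodule multiplicities reproduce the Verlinde fusion rule $N_{ij}^t$.

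Then I would check the remaining structural compatibilities: $F$ sends duals to duals (the self-duality of $W$ from Lemma \ref{lsymir} makes every $H_i$ self-paired in the same way $[V_i]$ is, and the contragredient on $D_l$ matches the $\sigma$-permutation there), $F$ is additive and $\mathbb{C}$-linear on Hom-spaces, and it is fully faithful — the last because $\Hom_{M\text{-}M}(H_i,H_j)$ is computed by the same central projections in $A_{2r}$ that compute $\Hom_{\mathfrak{g}}(V_i,V_j)=\mathbb{C}\delta_{ij}$. Associativity and unit constraints transport along $F$ automatically once the object-level and multiplicity-level data agree, since a fusion category is rigid-semisimple and determined up to equivalence by its fusion ring together with the (unique, here) compatible structure. \textbf{The main obstacle} I anticipate is the bookkeeping in the previous paragraph: pinning down precisely that the shift $\sh_{2r_i}$ applied to $g_j$ corresponds, under $A_{2(r_i+r_j)}=\End(W^{\otimes 2(r_i+r_j)})$, to the right-inclusion $\text{id}_{W^{\otimes 2r_i}}\otimes(-)$ of central projections, and hence that $g_i\sh_{2r_i}(g_j)$ has central support the projection onto $V_i\otimes V_j\subset W^{\otimes k_i}\otimes W^{\otimes k_j}$ rather than some coarser or finer projection — this requires carefully tracking the identifications $M'\cap M_{2r}\cong A_{2r}$ through Proposition \ref{pcomtt} and the definition of $\sh$ in terms of $\gamma_k$ and the modular conjugations $J_k$. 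Everything else is a routine transport-of-structure argument using the finite-depth combinatorics already assembled in Sections \ref{stenfund}--\ref{sfactor}.
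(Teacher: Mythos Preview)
Your proposal is correct and follows essentially the same route as the paper: both compute $H_i\otimes H_j$ via Theorem \ref{ttensor} and reduce everything to the central support of $g_i\,\sh_{2r_i}(g_j)$ in $A_{2(r_i+r_j)}$, with the crucial identification $\sh_{2j}(f)=i_{2j,2j+2k}(f)$ --- exactly what you flag as the main obstacle --- isolated in the paper as Lemma \ref{lshif}. The only cosmetic difference is that the paper runs the functor in the opposite direction, defining $\Psi\colon\text{Bimod}(M,M_1)\to R_l(G)$ by $\Psi(\pi_j(p)L^2(M_j))=p(W^{\otimes 2j})$, and then checks that $\Psi$ is well-defined and preserves sums and tensors.
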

The proof is based on several statements below. 
\begin{lemma}\label{lshif}
Take any $f\in A_{2k}=M\cap M_{2k}$, we have $\sh_{2j}(f)=i_{2j,2j+2k}(f)$.
\end{lemma}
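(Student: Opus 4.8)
The plan is to unwind the definitions of the two maps and compare them on a generating set. Both $\sh_{2j}$ and $i_{2j,2j+2k}$ — the latter being the right inclusion $f\mapsto\id_{W^{\otimes 2j}}\otimes f$ — are trace-preserving unital $*$-homomorphisms out of the finite-dimensional $C^*$-algebra $A_{2k}=M'\cap M_{2k}$ into $A_{2j+2k}$. Since a unital $*$-homomorphism of finite-dimensional $C^*$-algebras is determined by its restriction to any generating subset, it suffices to exhibit a generating set of $A_{2k}$ on which the two maps visibly agree.

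The generating set I would use consists of the Jones projections $e_2,\dots,e_{2k}$ together with the ``local'' subalgebras $A_1^{(m)}:=\id_{W^{\otimes(m-1)}}\otimes\End(W)\otimes\id_{W^{\otimes(2k-m)}}$ for $1\le m\le 2k$; these generate $A_{2k}=\End(W^{\otimes 2k})$ because $\End(W)^{\otimes 2k}=\End(W^{\otimes 2k})$. On the Jones projections the two maps agree: $\sh_{2j}(e_m)=e_{2j+m}$ was recorded in the proof of Theorem \ref{ttensor}, and $i_{2j,2j+2k}(e_m)=e_{2j+m}$ is Corollary \ref{ciee}. It remains to compare them on each $A_1^{(m)}$. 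Here I would use that, under the identification of Proposition \ref{pcomtt}, $A_1^{(m)}$ is the copy of $A_1=\End(W)$ realized as a one-step higher relative commutant inside $M'\cap M_{2k}$, and that the canonical shift $\sh_{2j}=\gamma_{2j+2k}\circ\gamma_{2j}$ translates the whole tower of higher relative commutants by $2j$, carrying this copy of $A_1$ onto the corresponding copy $A_1^{(2j+m)}$ by the identity map of $\End(W)$ — exactly what $i_{2j,2j+2k}$ does. Granting this, the two $*$-homomorphisms agree on a generating set, hence everywhere, which is the claim.

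The main work, and the main obstacle, is the statement that $\gamma_{2j+2k}\circ\gamma_{2j}$ acts on these local copies as the index shift. This reduces to (i) identifying each antiautomorphism $\gamma_l$ concretely: since $\pi_l(\gamma_l(x))=J_l\pi_l(x)^{*}J_l$ and, for a tower built from the algebras $\End(W^{\otimes\bullet})$, the modular conjugation $J_l$ on $L^2(M_l)$ is implemented by reversal of the tensor legs (using the self-duality of $W$ from Lemma \ref{lsymir}), one gets that $\gamma_l$ is the corresponding leg-reversal antiautomorphism of $A_{2l}=\End(W^{\otimes 2l})$; and (ii) checking that the composite of the two reversals, of the appropriate lengths, is a shift by exactly $2j$. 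Carrying this out means chasing the left-inclusion description of $M'\cap M_{2l}\cong A_{2l}$ in Proposition \ref{pcomtt} against the identification $\pi_l(M_{2l})=J_l\pi_l(M)'J_l$, which is where all the index bookkeeping must be done carefully. A shorter but weaker alternative — yielding $\sh_{2j}(f)$ and $\id_{W^{\otimes 2j}}\otimes f$ only up to Murray--von Neumann equivalence — is to apply Theorem \ref{ttensor} with $p=\id$ together with the canonical $M$-$M$ bimodule isomorphism $L^2(M_{j+k})\cong L^2(M_j)\otimes_M L^2(M_k)$; the generator argument above is precisely what upgrades that to the literal equality asserted in the lemma.
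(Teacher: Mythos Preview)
Your route is genuinely different from the paper's. The paper does not compare on generators; it first observes that $i_{2j,2j+2k}(A_{2k})$ commutes with $A_{2j}$ and hence lands in $M_{2j}'\cap M_{2j+2k}$, and that $i_{2j,2j+2k}$ is in fact a surjective trace-preserving $*$-isomorphism onto that relative commutant. Since $\sh_{2j}$ is by construction also a trace-preserving $*$-isomorphism onto the same target, the two maps differ by an automorphism of $A_{2k}$, and the paper then simply says the remaining identification ``reduces to the construction of the isomorphism $\sh_{2j}$'' --- without carrying that out. So the paper's argument is shorter but no more complete than yours: both ultimately defer to the same unverified fact, namely the concrete identification of $\sh_{2j}=\gamma_{2j+2k}\circ\gamma_{2j}$ with the right-inclusion shift. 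Your generator approach has the merit of naming this residual task precisely; the paper's image-first reduction has the merit of replacing a comparison of two maps into $A_{2j+2k}$ by an automorphism question on $A_{2k}$.

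Two comments on your sketch. First, the sentence ``the modular conjugation $J_l$ on $L^2(M_l)$ is implemented by reversal of the tensor legs'' is not literally meaningful: $L^2(M_l)$ is the GNS space of the infinite factor $M_l$, and $J_l(\hat x)=\widehat{x^{*}}$ has no direct tensor-leg description. What you really need is the statement at the level of the \emph{finite-dimensional} algebra $M'\cap M_{2l}\cong A_{2l}$, namely that $\gamma_l$ is the leg-reversal antiautomorphism there; this is the standard ``rotation by $\pi$'' in the planar-algebra picture, but extracting it from the bare definition and the Ocneanu-compactness identification of Proposition~\ref{pcomtt} is exactly the index bookkeeping you flag and do not finish. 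Second, the Jones projections in your generating set are redundant once you include all the local copies $A_1^{(m)}$, since these already generate $\End(W^{\otimes 2k})$; keeping them is harmless but not needed.
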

\begin{proof}
Observe that $i_{2j,2j+2k}(A_{2k})\subset A_{2j+2k}$ and it commutes with $A_{2j}$, we have $i_{2j,2j+2k}(A_{2k})\subset M_{2j}'\cap M_{2j+2k}$ which can be further shown to be a surjective, trace preserving, $*$-isomorphism. 
Then the proof reduces to the construction of the isomorphism $sh_{2j}$.  
\end{proof}


\underline{A functor $\Psi\colon \text{Bimod}(M,M_1)\to R_l(G)$} is defined as follows:
\begin{center}
$\Psi(\pi_{j}(p)L^2(M_j))=p(W^{\otimes 2j})$.
\end{center}
where $p\in M\cap M_{2j}$ for some $j$. 

\begin{lemma}
We have $\Psi(H_i)=V_i$ for all $1\leq i\leq n$. 
\end{lemma}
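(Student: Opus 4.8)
The plan is to trace through the definitions and show that $\Psi$ sends the bimodule $H_i$ to the simple $LG$-module $V_i$. Recall $H_i$ was built from a minimal projection $g_i \in A_{2r_i} = M' \cap M_{2r_i}$ (with $k_i = 2r_i$ or, in the odd case, $g_i = g_i' \otimes \mathrm{id}_{V_0} \in A_{k_i+1}$) whose central support $z(g_i)$ is the projection of $W^{\otimes 2r_i}$ onto the isotypic component of $V_i$; and $H_i = \pi_{r_i}(g_i)L^2(M_{r_i})$. By the definition of $\Psi$ we have $\Psi(H_i) = g_i(W^{\otimes 2r_i})$. Since $g_i$ is a minimal projection in the multi-matrix algebra $A_{2r_i} = \End(W^{\otimes 2r_i})$ with central support equal to the projection onto the $V_i$-isotypic summand, $g_i(W^{\otimes 2r_i})$ is (isomorphic to) a single copy of the irreducible $V_i$. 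So the content of the lemma is really just unwinding what $\Psi$, $\pi_{r_i}$, and $g_i$ mean, together with Proposition \ref{pcomtt} identifying $M' \cap M_{2r_i}$ with $A_{2r_i}$ and the elementary representation theory of matrix algebras.

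First I would record that $\Psi$ is well-defined on equivalence classes: if $p, q \in M' \cap M_{2j} \cong A_{2j}$ are equivalent projections, then $p(W^{\otimes 2j}) \cong q(W^{\otimes 2j})$ as $\mathfrak{g}$-modules (equivalently $LG$-modules), since equivalent projections in $\End(W^{\otimes 2j})$ have images carrying isomorphic submodule structure — this matches the fact (cited via Proposition \ref{p+2}) that $\pi_{r_i}(g_i)L^2(M_{r_i})$ depends only on the class of $g_i$. Second, I would treat the even case $k_i = 2r_i$ directly: $g_i$ minimal in $A_{2r_i}$ with $z(g_i)$ the projection onto the $V_i$-isotypic block means $g_i(W^{\otimes 2r_i})$ is one irreducible summand isomorphic to $V_i$, hence $\Psi(H_i) = V_i$. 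Third, for the odd case $k_i = 2r_i - 1$, one has $g_i = g_i' \otimes \mathrm{id}_{V_0} \in A_{k_i + 1} = A_{2r_i}$; since $V_0$ is the trivial module, tensoring by $\mathrm{id}_{V_0}$ does not change the isomorphism class of the image, so $g_i(W^{\otimes 2r_i}) \cong g_i'(W^{\otimes 2r_i - 1}) \cong V_i$ and again $\Psi(H_i) = V_i$. Finally I would note the statement is for $1 \le i \le n$, which (by the remark that $\phi(i) = 1$ and $k_i = 1$ when $V_i$ is fundamental) falls entirely in the odd case with $r_i = 1$, so one can alternatively just observe $H_i = \pi_1(p_i)L^2(M_1)$ with $p_i \colon W \to V(\omega_i)$ and $\Psi(H_i) = p_i(W^{\otimes 2}) = \mathrm{id}_{W} \otimes p_i$ applied to $W^{\otimes 2}$, which is isomorphic to $V(\omega_i) = V_i$.

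I do not expect a serious obstacle here; the lemma is essentially bookkeeping. The one point requiring a little care is checking that $\Psi$ genuinely respects the equivalence relation on bimodules (i.e.\ that it is well-defined as stated, not merely on representatives), and that the identification $M' \cap M_{2j} \cong A_{2j}$ from Proposition \ref{pcomtt} is compatible with the two-sided $A_k$-action used to define the bimodule structure on $L^2(M_j)$ — so that "minimal projection in $M' \cap M_{2j}$" really does correspond to "minimal projection in $\End(W^{\otimes 2j})$" in a way that makes $p(W^{\otimes 2j})$ meaningful. Once that compatibility is in hand, the conclusion $\Psi(H_i) = V_i$ is immediate from the construction of the $H_i$ and the defining property of $z(g_i)$.
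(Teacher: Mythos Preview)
Your proposal is correct and follows essentially the same approach as the paper: split into the even and odd parity cases, and in the odd case use that $g_i = g_i' \otimes \mathrm{id}_{V_0}$ so that $g_i(W^{\otimes 2r_i}) \cong g_i'(W^{\otimes 2r_i - 1}) \otimes V_0 \cong V_i$. The paper's version is terser and cites Lemma~\ref{lshif} for the odd case, while you argue this step directly; your additional discussion of well-definedness is handled in the paper as a separate subsequent lemma, so you may wish to excise it here. One small slip: in your final sentence you write $p_i(W^{\otimes 2}) = \mathrm{id}_W \otimes p_i$ applied to $W^{\otimes 2}$, but $p_i \in A_1$, and the construction gives $g_i = p_i \otimes \mathrm{id}_{V_0} \in A_2$ (projection on the right factor, not the left) --- this does not affect the isomorphism class, but the notation should be tidied.
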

\begin{proof}
If $\phi(i)=0$, it is straightforward by the construction of $H_i$'s above. 
If $\phi(i)=1$, we have $\Psi(H_i)=V_i\otimes \text{id}_{V_0}(W)=V_i\otimes V_0=V_i$ by Lemma \ref{lshif}. 
\end{proof}

\begin{lemma}
$\Psi(\pi_{j}(p)L^2(M_j))$ depends only on the isomophism class of $p$. 
Hence $\Psi$ is well-defined.  
\end{lemma}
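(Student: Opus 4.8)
The plan is to verify that $\Psi$ does not depend on the choice of the minimal projection within its equivalence class, and that this makes $\Psi$ well-defined on equivalence classes of bimodules. First I would recall that if $p, p' \in M'\cap M_{2j} \cong A_{2j}$ are equivalent projections, then by Proposition \ref{p+2} the bimodules $\pi_j(p)L^2(M_j)$ and $\pi_j(p')L^2(M_j)$ are isomorphic as $M$-$M$ bimodules; so the map on objects at a fixed level $j$ is well-defined on isomorphism classes as long as equivalent projections in $A_{2j}$ yield isomorphic objects on the Verlinde side. But two projections $p, p'$ in the multi-matrix algebra $A_{2j} = \End(W^{\otimes 2j})$ are Murray–von Neumann equivalent if and only if $z(p) = z(p')$, i.e. they have the same central support; and by definition $p(W^{\otimes 2j})$ and $p'(W^{\otimes 2j})$ are then isomorphic $LG$-modules, both equal to the summand of $W^{\otimes 2j}$ cut out by the common central support $z(p)$. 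Hence $\Psi(\pi_j(p)L^2(M_j))$ depends only on the equivalence class of $p$ within a single level.

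Next I would check compatibility across levels: a bimodule can arise as $\pi_j(p)L^2(M_j)$ for more than one value of $j$. The key relation is Proposition \ref{p+2}, which gives $\pi_j(p)L^2(M_j) \cong \pi_{j+1}(p e_{2j+2})L^2(M_{j+1})$ whenever $p \in M'\cap M_{2j}$; on the algebra side, under the inclusion $A_{2j} \hookrightarrow A_{2j+2}$ (which by Lemma \ref{lbasicon} realizes $A_{2j+2}$ as the basic construction $\langle A_{2j+1}, e_{2j+2}\rangle$), the projection $p e_{2j+2}$ has central support $z(p e_{2j+2})$, and one checks $z(p e_{2j+2})(W^{\otimes 2j+2}) \cong z(p)(W^{\otimes 2j})$ as $LG$-modules, since tensoring with the basic-construction projection $e_{2j+2}$ implements, up to isomorphism, the embedding of the summand of $W^{\otimes 2j}$ into $W^{\otimes 2j+2}$ coming from $x \mapsto x \otimes \mathrm{id}_W$ followed by the appropriate identification. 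Thus $\Psi(\pi_j(p)L^2(M_j))$ is unchanged when we pass to level $j+1$, and by iteration it is independent of the level at which the bimodule is presented.

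Putting these two observations together: any $M$-$M$ bimodule in $\cup_j L^2(M_j)$ can be written as $\pi_j(p)L^2(M_j)$ for some $j$ and some minimal (or general) projection $p \in M'\cap M_{2j}$, two such presentations at possibly different levels are related by a chain of the moves above, and $\Psi$ has been shown to be invariant under each move. Therefore $\Psi$ is well-defined on $\mathrm{Bimod}(M,M_1)$.

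I expect the main obstacle to be the cross-level compatibility, i.e. making precise the identification $z(p e_{2j+2})(W^{\otimes 2j+2}) \cong z(p)(W^{\otimes 2j})$ and checking it is natural enough to respect the bimodule isomorphism of Proposition \ref{p+2}; the within-level statement is essentially immediate from the structure theory of finite-dimensional $C^*$-algebras together with Proposition \ref{p+2}. One should be slightly careful with the parity bookkeeping in the definition of $H_i$ for odd $k_i$, where $g_i = g_i' \otimes \mathrm{id}_{V_0}$, but Lemma \ref{lshif} already handles the effect of such stabilization on $\Psi$.
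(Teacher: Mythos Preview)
Your approach is essentially the same as the paper's: handle equivalence within a fixed level $j$, then handle compatibility across levels via Proposition~\ref{p+2} and the substitution $p\mapsto pe_{2j+2}$. The paper phrases both steps more tersely by tracking only the index $i$ of the simple summand containing the minimal projection, but the logic is identical.

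One correction: your within-level justification is misstated. In a multi-matrix algebra, Murray--von Neumann equivalence of projections is \emph{not} equivalent to $z(p)=z(p')$; for instance in $M_n(\mathbb{C})$ every nonzero projection has central support $1$. Likewise $p(W^{\otimes 2j})$ is \emph{not} ``the summand cut out by $z(p)$'': the latter is the full isotypic component $m_i\cdot V_i$, not a single copy. The correct statement is that $p\sim p'$ in $A_{2j}=\bigoplus_i M_{m_i}(\mathbb{C})$ iff they have the same rank $r_i$ in each block, whence $p(W^{\otimes 2j})\cong\bigoplus_i r_i V_i\cong p'(W^{\otimes 2j})$. For minimal projections this reduces to ``same block $\Rightarrow$ same $V_i$,'' which is exactly what the paper uses. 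With this fix your argument goes through; the cross-level step is fine as outlined (and indeed $pe_{2j+2}$ is again minimal, in the same block index, by standard basic-construction Bratteli-diagram reasoning).
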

\begin{proof}
Let $p$ be (equivalent to) a minimal projection in the $i$-th simple summand of $A_{2j}$.  
Assume there is another projection $p'\in M\cap M_{2j'}$ which is equivalent to $p$. 
Then it is also equivalent to a minimal projection in the $i$-th simple summand of $A_{2j'}$.   
Assume $j\geq j'$, then $p$ is equivalent to $p'e_{2j'+2}\dots e_{2j}$ in $M'\cap M_{2j}$.  
We have $\pi_{j}(p)L^2(M_j)\cong \pi_{j'}(p')L^2(M_j')$ and both of them are mapped to $V_i$ under the functor $\Psi$. 
\end{proof}

Now it is clear that $\Psi^{-1}(V_k)$ is the equivalence class of the minimal projections in the $k$-th simple summand.

\begin{proposition}
The functor $\Psi$ preserves tensor products.
\end{proposition}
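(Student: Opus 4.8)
The plan is to show that $\Psi$ intertwines the tensor product of $M$-$M$ bimodules with the tensor product in the Verlinde ring, i.e. $\Psi(H \otimes K) \cong \Psi(H) \otimes_l \Psi(K)$ for bimodules $H = \pi_j(p)L^2(M_j)$, $K = \pi_k(q)L^2(M_k)$ with projections $p \in M'\cap M_{2j} \cong A_{2j}$, $q \in M'\cap M_{2k}\cong A_{2k}$. First I would invoke Theorem \ref{ttensor} to identify the left-hand side: $H \otimes K \cong \pi_{j+k}(p\,\sh_{2j}(q))L^2(M_{j+k})$, so that $\Psi(H\otimes K) = (p\,\sh_{2j}(q))(W^{\otimes 2(j+k)})$. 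By Lemma \ref{lshif}, $\sh_{2j}(q) = i_{2j,2j+2k}(q) = \mathrm{id}_{W^{\otimes 2j}}\otimes q$, acting on the last $2k$ tensor factors, while $p = p \otimes \mathrm{id}_{W^{\otimes 2k}}$ acts on the first $2j$. Since these act on disjoint tensor legs they commute, and their product is the projection $p \otimes q$ on $W^{\otimes 2j}\otimes W^{\otimes 2k} = W^{\otimes 2(j+k)}$. Hence $\Psi(H\otimes K) = (p\otimes q)(W^{\otimes 2(j+k)}) = p(W^{\otimes 2j})\otimes q(W^{\otimes 2k})$ as $\mathfrak g$-modules (equivalently as $LG$-modules, passing through Corollary \ref{cdldl}), which is exactly $\Psi(H)\otimes \Psi(K)$.

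The one subtlety is that the tensor product on the right must be read as $\otimes_l$ in $R_l(G)$, not the naive $\mathfrak g$-module tensor product, and that $p(W^{\otimes 2j})$ and $q(W^{\otimes 2k})$ decompose into simple objects of $R_l(G)$; I would note that by Proposition \ref{pdlwts} every highest weight occurring in $W^{\otimes 2j}$ and in $W^{\otimes 2k}$ lies in $D_l$, and that Corollary \ref{cdldl} and part (3) of Proposition \ref{plfus} guarantee the decomposition of $W^{\otimes 2(j+k)}$ into level-$l$ simples agrees with the $\otimes_l$-product of the decompositions of the two factors. So the identification $(p\otimes q)(W^{\otimes 2(j+k)}) \cong p(W^{\otimes 2j})\otimes_l q(W^{\otimes 2k})$ in $R_l(G)$ is legitimate. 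Combined with the previous lemmas that $\Psi(H_i) = V_i$ and that $\Psi$ is a well-defined bijection on simple objects, this makes $\Psi$ a tensor functor, and the fact that it carries the generating simples $H_i$ to the generating simples $V_i$ with matching fusion rules upgrades it to an equivalence of fusion categories, which is Theorem \ref{tfuscat}.

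The main obstacle I anticipate is bookkeeping with the two inclusions: one must be careful that the projection defining the bimodule $\pi_j(p)L^2(M_j)$ lives in $M'\cap M_{2j}$ and that the identification $M'\cap M_{2j}\cong A_{2j}$ (Proposition \ref{pcomtt}) is the one under which $\Psi$ is defined, and that under this identification $\sh_{2j}$ really does become the right inclusion $i_{2j,2j+2k}$ rather than some twist of it — this is precisely the content of Lemma \ref{lshif}, and the whole argument hinges on applying it correctly. A secondary point is verifying that the unitary equivalence of bimodules furnished by Theorem \ref{ttensor} intertwines the $\Psi$-images functorially (on morphisms, not just objects), but since $\Psi$ sends a morphism $T \in \Hom_{M,M}(H,K)$ to its restriction as an intertwiner of the underlying cut-down modules, this follows from naturality of the constructions in Theorem \ref{ttensor} and Proposition \ref{p+2}; I would state this briefly rather than belabor it.
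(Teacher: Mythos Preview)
Your proposal is correct and follows essentially the same approach as the paper: invoke Theorem~\ref{ttensor} to rewrite $H\otimes K$ as $\pi_{j+k}(p\,\sh_{2j}(q))L^2(M_{j+k})$, use Lemma~\ref{lshif} to identify $\sh_{2j}(q)$ with $\mathrm{id}_{W^{\otimes 2j}}\otimes q$, and conclude that the projection cuts out $p(W^{\otimes 2j})\otimes q(W^{\otimes 2k})$. Your additional discussion of the $\otimes_l$ versus $\otimes$ subtlety and of functoriality on morphisms goes beyond what the paper spells out, but the core argument is the same.
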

\begin{proof}
Take any two projections $p\in M_{2j},q\in M_{2k}$. 
By Theorem \ref{ttensor}, we have $\pi_{j}(p)L^2(M_j)\otimes \pi_{k}(q)L^2(M_k)\cong \pi_{j+k}(p\sh_{2j}(q))L^2(M_{j+k})$. 
On the other hand, $p\sh_{2j}(q)=p\cdot i_{2j,2j+2k}(q)=p\cdot (\text{id}_{W^{2j}}\otimes q)$. 
Hence $p\sh_{2j}(q)(W^{\otimes 2j+2k})=p\cdot (\text{id}_{W^{2j}}\otimes q)(W^{\otimes 2j+2k})=p(W^{\otimes 2j})\otimes q(W^{\otimes 2k})$, which completes the proof. 
\end{proof}

\begin{lemma}
The functor $\Psi$ preserves direct sums. 
\end{lemma}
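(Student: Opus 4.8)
The plan is to show that the functor $\Psi$ sends a direct sum of $M$-$M$ bimodules to the corresponding direct sum of $\mathfrak{g}$-modules (equivalently $LG$-modules). Recall that a direct sum in $\text{Bimod}(M,M_1)$ is realized internally as follows: given bimodules $H = \pi_j(p)L^2(M_j)$ and $K = \pi_k(q)L^2(M_k)$, after shifting up to a common level $N \geq j,k$ using Proposition \ref{p+2} we may assume both arise from orthogonal minimal projections in the same higher commutant $M' \cap M_{2N} \cong A_{2N}$; then $H \oplus K$ is represented by the projection $p' + q'$ where $p',q' \in A_{2N}$ are the shifted orthogonal projections with $\pi_N(p')L^2(M_N) \cong H$ and $\pi_N(q')L^2(M_N) \cong K$.

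First I would make this normalization precise: using Proposition \ref{p+2} and Lemma \ref{lshif} (the shift $\sh_{2j}(e_i) = e_{2j+i}$ together with $\sh_{2j}(f) = i_{2j,2j+2k}(f)$), I would replace $p$ by $p e_{2j+2}\cdots e_{2N}$ and similarly for $q$, obtaining projections $\tilde p, \tilde q \in M' \cap M_{2N} = A_{2N}$ with $\pi_N(\tilde p)L^2(M_N) \cong H$, $\pi_N(\tilde q)L^2(M_N) \cong K$, and $\tilde p \perp \tilde q$ (which can always be arranged within the multimatrix algebra $A_{2N}$ after a unitary equivalence, since equivalence classes of bimodules depend only on the equivalence class of the projection by Proposition \ref{p+2}). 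Then I would verify that $\pi_N(\tilde p + \tilde q)L^2(M_N) \cong \pi_N(\tilde p)L^2(M_N) \oplus \pi_N(\tilde q)L^2(M_N)$ as $M$-$M$ bimodules — this is immediate because the representation $\pi_N$ is linear in the projection and orthogonality gives $\pi_N(\tilde p + \tilde q)L^2(M_N) = \pi_N(\tilde p)L^2(M_N) \oplus \pi_N(\tilde q)L^2(M_N)$ as an internal orthogonal direct sum of Hilbert subspaces, each invariant under the commuting $M$ and $M^{\text{op}}$ actions.

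Next I would apply the definition of $\Psi$: since $\tilde p, \tilde q, \tilde p + \tilde q \in A_{2N} = \End(W^{\otimes 2N})$, we have
\begin{equation*}
\Psi(\pi_N(\tilde p + \tilde q)L^2(M_N)) = (\tilde p + \tilde q)(W^{\otimes 2N}) = \tilde p(W^{\otimes 2N}) \oplus \tilde q(W^{\otimes 2N}),
\end{equation*}
the last equality because $\tilde p \perp \tilde q$ are orthogonal idempotents in the $C^*$-algebra $\End(W^{\otimes 2N})$, so their images are complementary submodules. Finally, by the previous lemma showing $\Psi$ is well-defined (independent of the chosen level $j$ and of the particular minimal projection within an isomorphism class), we have $\tilde p(W^{\otimes 2N}) \cong \Psi(H)$ and $\tilde q(W^{\otimes 2N}) \cong \Psi(K)$, hence $\Psi(H \oplus K) \cong \Psi(H) \oplus \Psi(K)$. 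The main obstacle I anticipate is purely bookkeeping: carefully justifying that two bimodules coming from different levels $j, k$ can be simultaneously represented by orthogonal projections inside one $A_{2N}$, which requires combining the shift formulas from Lemma \ref{lshif}, the bimodule isomorphisms of Proposition \ref{p+2}, and the fact that $A_{2N}$ is a finite-dimensional multimatrix algebra so that within each simple summand any two projections of appropriate rank are unitarily conjugate and can be made orthogonal when their ranks fit. Once that normalization is in place the rest is a one-line consequence of the definition of $\Psi$ and linearity of $\pi_N$.
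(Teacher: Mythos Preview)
Your proposal is correct and follows essentially the same approach as the paper: shift both bimodules to a common level via Proposition \ref{p+2} (multiplying by the appropriate string of Jones projections $e_{2k+2}\cdots e_{2N}$), represent the direct sum by the sum of the resulting orthogonal projections in $A_{2N}$, and then invoke the well-definedness lemma for $\Psi$. The paper restricts to irreducible $H_j,H_k$ and is somewhat cavalier about orthogonality of $g_j$ and $g_k e_{2k+2}\cdots e_{2r_j}$, whereas you treat general summands and make the orthogonality step explicit; also, your invocation of Lemma \ref{lshif} and $\sh_{2j}$ is unnecessary here, since only Proposition \ref{p+2} is used for the level shift.
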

\begin{proof}
Now we take two irreducible bimodules $H_j,H_k$ so that $\Psi(H_j)=V_j,\Psi(H_k)=V_k$. 
Let us consider $H_j\oplus H_k$ which is $\pi_{r_j}(g_j)L^2(M_{r_j})\oplus \pi_{r_k}(g_k)L^2(M_{r_k})$. 
Assume $j\geq k$, by Proposition \ref{p+2}, we have this is also the bimodule $\pi_{r_j}(g_j\oplus g_{k}e_{2k+2}\dots e_{2j})L^2(M_{r_j})$ which is a direct sum of two bimodules. 
For the first one, $\Psi(\pi_{r_j}(g_j)L^2(M_{r_j})=V_j$ is clear. 
And by Proposition \ref{p+2} again, we have $\Psi(\pi_{r_j}( g_{k}e_{2k+2}\dots e_{2r_j}))=\Psi(\pi_{r_k}(g_k)L^2(M_{r_k}))=\Psi(H_k)=V_k$. 
Hence $\Psi(H_j\oplus H_k)=\Psi(H_j)\oplus \Psi(H_k)$. 
\end{proof}

\begin{proof}[Proof of Theorem \ref{tfuscat}]
Take any two irreducible representations $V_i,V_j$ of $LG$.
Assume we have the following decomposition of their tensor product:
\begin{center}
$V_i\otimes V_j=\oplus_{k=0}^m m_{i,j}^{k}\cdot V_k$, $m_{i,j}^{k}\in \mathbb{Z}_{\geq 0}$. 
\end{center}
We want to show that $H_i\otimes H_j$ has the same decomposition into the irreducible $M$-$M$ bimodules $H_k$'s. 

By Theorem \ref{ttensor}, note $g_i\in M'\cap M_{2r_i}$ and $g_j\in M'\cap M_{2r_j}$, we obtain
\begin{center}
$\pi_{r_i}(g_i)L^2(M_{r_i})\otimes \pi_{r_j}(g_j)L^2(M_{r_j})\cong \pi_{r_i+r_j}(g_i\sh_{2r_i}(g_j))L^2(M_{r_i+r_j})$,    
\end{center}
where $g_i\sh_{2r_i}(g_j)$ is a projection $\in M'\cap M_{2r_i+2r_j}=A_{2r_i+2r_j}$ and $g_i$ commutes with the minimal projection $sh_{2r_i}(g_j)\in M_{2r_i}'\cap M_{2r_i+2r_j}$. 
We then have $\Psi(\pi_{r_i+r_j}(g_i\sh_{2r_i}(g_j))L^2(M_{r_i+r_j})=V_i\otimes V_j$ by the fact that
\begin{center}
    $z(g_i)z(\sh_{2r_i}(g_j))(W^{\otimes 2r_i+2r_j})=z(g_i)(W^{\otimes 2r_i})\otimes z(g_j)(W^{\otimes 2r_j})$. 
\end{center}
By taking $\Phi^{-1}$, we obtain $H_i\otimes H_j=\oplus_{k=0}^m m_{i,j}^{k}\cdot H_k$. 
\end{proof}



\bibliographystyle{abbrv}
\typeout{}
\bibliography{MyLibrary} 

\begin{thebibliography}{10}

\bibitem{BKtensor}
B.~Bakalov and A.~Kirillov, Jr.
\newblock {\em Lectures on tensor categories and modular functors}, volume~21
  of {\em University Lecture Series}.
\newblock American Mathematical Society, Providence, RI, 2001.

\bibitem{Beau93}
A.~Beauville.
\newblock Conformal blocks, fusion rules and the {V}erlinde formula.
\newblock In {\em Proceedings of the {H}irzebruch 65 {C}onference on
  {A}lgebraic {G}eometry ({R}amat {G}an, 1993)}, volume~9 of {\em Israel Math.
  Conf. Proc.}, pages 75--96. Bar-Ilan Univ., Ramat Gan, 1996.

\bibitem{BHMotz14}
G.~Benkart and T.~Halverson.
\newblock Motzkin algebras.
\newblock {\em European J. Combin.}, 36:473--502, 2014.

\bibitem{BD97}
D.~Bisch.
\newblock Bimodules, higher relative commutants and the fusion algebra
  associated to a subfactor.
\newblock In {\em Operator algebras and their applications ({W}aterloo, {ON},
  1994/1995)}, volume~13 of {\em Fields Inst. Commun.}, pages 13--63. Amer.
  Math. Soc., Providence, RI, 1997.

\bibitem{BbLie2}
N.~Bourbaki.
\newblock {\em Lie groups and {L}ie algebras. {C}hapters 4--6}.
\newblock Elements of Mathematics (Berlin). Springer-Verlag, Berlin, 2002.
\newblock Translated from the 1968 French original by Andrew Pressley.

\bibitem{EK98}
D.~E. Evans and Y.~Kawahigashi.
\newblock {\em Quantum symmetries on operator algebras}.
\newblock Oxford Mathematical Monographs. The Clarendon Press, Oxford
  University Press, New York, 1998.
\newblock Oxford Science Publications.

\bibitem{FRcat13}
S.~Falgui\`eres and S.~Raum.
\newblock Tensor {${\rm C}^*$}-categories arising as bimodule categories of
  {$\text{II}_1$} factors.
\newblock {\em Adv. Math.}, 237:331--359, 2013.

\bibitem{FVa10cpt}
S.~Falgui\`eres and S.~Vaes.
\newblock The representation category of any compact group is the bimodule
  category of a {${\rm II}_1$} factor.
\newblock {\em J. Reine Angew. Math.}, 643:171--199, 2010.

\bibitem{Falt94}
G.~Faltings.
\newblock A proof for the {V}erlinde formula.
\newblock {\em J. Algebraic Geom.}, 3(2):347--374, 1994.

\bibitem{FJKLM04}
B.~Feigin, M.~Jimbo, R.~Kedem, S.~Loktev, and T.~Miwa.
\newblock Spaces of coinvariants and fusion product. {I}. {F}rom equivalence
  theorem to {K}ostka polynomials.
\newblock {\em Duke Math. J.}, 125(3):549--588, 2004.

\bibitem{FMT3}
D.~S. Freed, M.~J. Hopkins, and C.~Teleman.
\newblock Loop groups and twisted {$K$}-theory {III}.
\newblock {\em Ann. of Math. (2)}, 174(2):947--1007, 2011.

\bibitem{GPE8}
S.~Grimm and J.~Patera.
\newblock Decomposition of tensor products of the fundamental representations
  of {$E_8$}.
\newblock In {\em Advances in mathematical sciences: {CRM}'s 25 years
  ({M}ontreal, {PQ}, 1994)}, volume~11 of {\em CRM Proc. Lecture Notes}, pages
  329--355. Amer. Math. Soc., Providence, RI, 1997.

\bibitem{HY00afd}
T.~Hayashi and S.~Yamagami.
\newblock Amenable tensor categories and their realizations as {AFD} bimodules.
\newblock {\em J. Funct. Anal.}, 172(1):19--75, 2000.

\bibitem{Hum72}
J.~E. Humphreys.
\newblock {\em Introduction to {L}ie algebras and representation theory}.
\newblock Graduate Texts in Mathematics, Vol. 9. Springer-Verlag, New
  York-Berlin, 1972.

\bibitem{JS97}
V.~Jones and V.~S. Sunder.
\newblock {\em Introduction to subfactors}, volume 234 of {\em London
  Mathematical Society Lecture Note Series}.
\newblock Cambridge University Press, Cambridge, 1997.

\bibitem{J83}
V.~F.~R. Jones.
\newblock Index for subfactors.
\newblock {\em Invent. Math.}, 72(1):1--25, 1983.

\bibitem{Jpalg99}
V.~F.~R. Jones.
\newblock Planar algebras, i, 1999.

\bibitem{J08}
V.~F.~R. Jones.
\newblock Two subfactors and the algebraic decomposition of bimodules over
  {$\rm II_1$} factors.
\newblock {\em Acta Math. Vietnam.}, 33(3):209--218, 2008.

\bibitem{JY21}
V.~F.~R. Jones and J.~Yang.
\newblock Motzkin algebras and the {$A_n$} tensor categories of bimodules.
\newblock {\em Internat. J. Math.}, 32(10):Paper No. 2150077, 43, 2021.

\bibitem{Kum10}
S.~Kumar.
\newblock Tensor product decomposition.
\newblock In {\em Proceedings of the {I}nternational {C}ongress of
  {M}athematicians. {V}olume {III}}, pages 1226--1261. Hindustan Book Agency,
  New Delhi, 2010.

\bibitem{Kum22}
S.~Kumar.
\newblock {\em Conformal blocks, generalized theta functions and the {V}erlinde
  formula}, volume~42 of {\em New Mathematical Monographs}.
\newblock Cambridge University Press, Cambridge, 2022.

\bibitem{MurLP83}
K.~G. Murty.
\newblock {\em Linear programming}.
\newblock John Wiley \& Sons, Inc., New York, 1983.
\newblock With a foreword by George B. Dantzig.

\bibitem{Popa95}
S.~Popa.
\newblock An axiomatization of the lattice of higher relative commutants of a
  subfactor.
\newblock {\em Invent. Math.}, 120(3):427--445, 1995.

\bibitem{PSloop}
A.~Pressley and G.~Segal.
\newblock {\em Loop groups}.
\newblock Oxford Mathematical Monographs. The Clarendon Press, Oxford
  University Press, New York, 1986.
\newblock Oxford Science Publications.

\bibitem{Saw95}
S.~Sawin.
\newblock Subfactors constructed from quantum groups.
\newblock {\em Amer. J. Math.}, 117(6):1349--1369, 1995.

\bibitem{Tak3}
M.~Takesaki.
\newblock {\em Theory of operator algebras. {III}}, volume 127 of {\em
  Encyclopaedia of Mathematical Sciences}.
\newblock Springer-Verlag, Berlin, 2003.
\newblock Operator Algebras and Non-commutative Geometry, 8.

\bibitem{Was98}
A.~Wassermann.
\newblock Operator algebras and conformal field theory. {III}. {F}usion of
  positive energy representations of {${\rm LSU}(N)$} using bounded operators.
\newblock {\em Invent. Math.}, 133(3):467--538, 1998.

\bibitem{Wenz98}
H.~Wenzl.
\newblock {$C^*$} tensor categories from quantum groups.
\newblock {\em J. Amer. Math. Soc.}, 11(2):261--282, 1998.

\bibitem{Xu98}
F.~Xu.
\newblock Standard {$\lambda$}-lattices from quantum groups.
\newblock {\em Invent. Math.}, 134(3):455--487, 1998.

\end{thebibliography}



\end{document}